\theoremstyle{plain}
\newtheorem{teo}{Theorem}[section]
\newtheorem{prop}[teo]{Proposition}
\newtheorem{lemma}[teo]{Lemma}
\newtheorem{cor}[teo]{Corollary}
\theoremstyle{definition}
\newtheorem{defin}[teo]{Definition}
\newtheorem{example}[teo]{Example}
\newtheorem{question}[teo]{Question}
\theoremstyle{remark}
\newtheorem{remark}[teo]{Remark}
\newtheorem{claim}[teo]{Claim}
\numberwithin{equation}{section}
\newcommand{\A}{\mathcal{A}}
\newcommand{\C}{\mathbb{C}}
\newcommand{\Cchamb}{\mathcal{C}}
\newcommand{\D}{\mathcal{D}}
\newcommand{\DC}{\mathrm{D}}
\newcommand{\E}{\mathscr{E}}
\newcommand{\Ftors}{\mathcal{F}}
\newcommand{\HC}{\mathcal{H}}
\newcommand{\K}{\mathcal{K}}
\renewcommand{\L}{\mathscr{L}}
\newcommand{\M}{\mathfrak{M}}
\newcommand{\m}{\mathfrak{m}}
\renewcommand{\O}{\mathscr{O}}
\renewcommand{\P}{\mathbb{P}}
\newcommand{\Pslicing}{\mathcal{P}}
\newcommand{\Q}{\mathbb{Q}}
\newcommand{\R}{\mathbb{R}}
\newcommand{\Ttors}{\mathcal{T}}
\newcommand{\U}{\mathcal{U}}
\newcommand{\Ucover}{\mathfrak{U}}
\newcommand{\Z}{\mathbb{Z}}
\let\Re\relax
\let\Im\relax
\DeclareMathOperator{\Amp}{Amp}
\DeclareMathOperator{\Aut}{Aut}
\DeclareMathOperator{\ch}{ch}
\DeclareMathOperator{\Coh}{Coh}
\DeclareMathOperator{\Exc}{Exc}
\DeclareMathOperator{\Ext}{Ext}
\DeclareMathOperator{\Hom}{Hom}
\DeclareMathOperator{\Im}{Im}
\DeclareMathOperator{\Nef}{Nef}
\DeclareMathOperator{\NS}{NS}
\DeclareMathOperator{\Re}{Re}
\DeclareMathOperator{\rk}{rk}
\DeclareMathOperator{\Spec}{Spec}
\DeclareMathOperator{\Stab}{Stab}
\DeclareMathOperator{\Supp}{Supp}
\DeclareMathOperator{\Tot}{Tot}
\newcommand{\Art}{\mathrm{Art}}
\newcommand{\Bl}{\mathrm{Bl}}
\newcommand{\Def}{\mathrm{Def}}
\newcommand{\EXT}{\mathcal{E}\kern -.5pt xt}
\newcommand{\GL}{\mathrm{GL}}
\newcommand{\HOM}{\mathcal{H}\kern -.5pt om}
\newcommand{\id}{\mathrm{id}}
\newcommand{\MC}{\mathrm{MC}}
\newcommand{\Set}{\mathrm{Set}}
\newcommand{\abs}[1]{\left\lvert#1\right\rvert}
\newcommand{\smallddots}{\tikz{\fill (0,0) circle [radius=0.02] (0.12,-0.08) circle [radius=0.02] (-0.12,0.08) circle [radius=0.02];}}
\begin{document}

\title{Some examples of moduli spaces on surfaces via wall-crossing}
\author{Nicolás Vilches}
\address{Department of Mathematics, Columbia University, 2990 Broadway, New York, NY 10027, USA}
\email{nivilches@math.columbia.edu}
\begin{abstract}
We describe new explicit examples of moduli spaces of Bridgeland semi\-stable objects on surfaces, parametrizing objects whose numerical class agrees with the class of a point. This follows ideas of Tramel and Xia, using stability conditions constructed in our previous work.

Our main technical tools are a careful analysis of the wall-crossing from the geometric chamber, and explicit models for the differential graded Lie algebra governing the local structure of the moduli spaces.
\end{abstract}

\maketitle

\tableofcontents

\section{Introduction}

Bridgeland stability conditions are a key tool for the study of derived categories of varieties. Given a smooth, projective variety $X$, we get a \emph{stability manifold} $\Stab(X)$ consisting on the (numerical) stability conditions on $X$. Conjecturally, $\Stab(X)$ is non-empty for any variety $X$.

If $\sigma=(Z, \A)$ is a stability condition on $\DC^b(X)$, then $\sigma$ defines a slope function on $\A$, and so a notion of semistability for objects in $\A$. Under some technical assumptions, we get moduli spaces $M_\sigma(v)$ parametrizing (S-equivalence classes of) $\sigma$-semistable objects with numerical vector $v$. 

A natural question is trying to \emph{describe} the moduli spaces $M_\sigma(v)$. For example, determining whether they are non-empty, the number of irreducible components, their singularities, and so on. It is not surprising that such a general question has been tackled extensively; for specific varieties, some examples include \citelist{\cite{AB13} \cite{BM14} \cite{Tod13} \cite{Xia18} \cite{TX22} \cite{Cho24} \cite{AS25}}. 

There is a general strategy that one can use to understand the moduli spaces $M_\sigma(v)$, cf. \cite{BM23}*{p. 2173}. We first identify an auxiliary stability condition $\tilde{\sigma}$ lying in the same connected component of $\Stab(X)$, for which $M_{\tilde{\sigma}}(v)$ is well understood. Standard examples include the \emph{large volume limit}, where $M_{\tilde{\sigma}}(v)$ agrees with moduli space of Gieseker semistable sheaves; or the \emph{geometric chamber}, where $M_{\tilde{\sigma}}([pt])$ is isomorphic to the original variety $X$. 

From here, we need a way to relate the moduli space $M_{\tilde{\sigma}}(v)$ with $M_\sigma(v)$. To do so, we look at the \emph{walls} of $\Stab(X)$ with respect to $v$: a locally finite set of real codimension 1 submanifolds dividing $\Stab(X)$ into \emph{chambers}. On each chamber, the semistable objects remain the same. This way, we pick a path $\sigma_t$ from $\tilde{\sigma}$ to $\sigma$ in $\Stab(X)$, and we study how the moduli space $M_{\sigma_t}(v)$ changes at each wall. 

Our goal is to use this approach to describe new explicit examples of moduli spaces of Bridgeland semistable objects on surfaces. Our first set of examples is induced by the contraction of disjoint rational curves on a surface.

\begin{teo} \label{teo:intro_disjoint}
Let $S$ be a smooth, projective surface. Assume that there are disjoint curves $C_1, \dots, C_r \subseteq S$, such that each $C_i$ is a smooth, rational curve of self-intersection $-n_i$, for some $n_i \geq 3$. Then, there exists a stability condition $\sigma \in \Stab(S)$, contained in the closure of the geometric chamber, satisfying the following:
\begin{enumerate}
\item There are $r$ walls $W_1, \dots, W_r$ with respect to $[pt]$ passing through $\sigma$, with transversal intersection.
\item The good moduli space $M_\sigma([pt])$ is isomorphic to the surface $T$ obtained from $S$ by contracting each of the $r$ curves $C_1, \dots, C_r$ to a cyclic quotient singularity $\frac{1}{n_i}(1, 1)$.
\item For each of the $2^r$ chambers determined by the walls $W_i$, the moduli space of semistable objects with numerical class $[pt]$ is isomorphic to $S \cup \P^{n_{i_1}-1} \cup \dots \cup \P^{n_{i_s}-1}$ for some $1 \leq i_1 <\dots <i_s \leq r$. The surface $S$ is glued along each $\P^{n_i-1}$ by identifying $C_i \subset S$ with a rational normal curve, and there are no further identifications.
\end{enumerate}
\end{teo}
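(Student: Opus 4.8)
The plan is to construct $\sigma$ explicitly as a limit point of stability conditions in the geometric chamber, using the construction from the author's previous work (a twisted or polynomial-type stability condition built so that the $\O_C(-1)$'s for the various curves $C_i$ become ``on the verge'' of destabilizing the skyscraper sheaves). The first step is to identify, for a single curve $C = C_i$ of self-intersection $-n$ with $n \geq 3$, the relevant wall: one wants $\sigma$ to sit on the wall $W_i$ where $\O_C(-1)$ and $\O_C(-1)[1]$ — equivalently the pair giving the short exact sequence $0 \to \O_C(-1) \to \O_x \to \O_C(-1)[1]\to \dots$ hmm, more precisely one wants the destabilizing sequence $0 \to \O_x \to \E \to \O_C(-1)[1] \to 0$ or its dual, so that at $\sigma$ the Jordan–Hölder factors of $\O_x$ (for $x \in C$) are $\O_C$ and $\O_C(-1)[1]$, while for $x \notin C$ the skyscraper stays stable. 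Computing the numerical walls is a routine discriminant/Bogomolov-inequality calculation on $\NS(S)$; since the $C_i$ are disjoint, the walls $W_i$ are cut out by independent numerical conditions, which gives the transversality in (1). The key input I would quote is that the stability conditions of the earlier paper actually realize the intersection point $\bigcap_i W_i$ as an honest point of $\Stab(S)$, i.e.\ the wall-and-chamber structure near $\sigma$ is as predicted numerically.

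Next, for part (2), I would analyze the S-equivalence classes at $\sigma$. For $x \notin \bigcup C_i$ the object $\O_x$ remains stable, contributing an open subset $S \setminus \bigcup C_i$. For $x \in C_i$, the object $\O_x$ becomes strictly semistable with JH factors $\O_{C_i}(?)$ and $\O_{C_i}(?)[1]$ that do \emph{not} depend on $x$ (this is the crucial point: on a $(-n)$-curve with $n\ge 3$ the extension space $\Ext^1$ governing how $\O_x$ is built is large, so all the $\O_x$, $x \in C_i$, are S-equivalent to the \emph{same} polystable object). Hence the good moduli space collapses each $C_i$ to a single point, and I would identify the resulting normal surface with $T$, checking that the singularity type is $\frac{1}{n_i}(1,1)$ by computing the local analytic structure — either via the contraction criterion (Grauert/Artin) applied to a $(-n_i)$-curve, which is classically $\frac{1}{n_i}(1,1)$, or directly from the local model of the moduli functor. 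The isomorphism $M_\sigma([pt]) \cong T$ would then follow by comparing universal properties: $M_\sigma([pt])$ is the good moduli space, $S \to T$ contracts exactly the $C_i$, and both agree on the common open locus.

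For part (3), crossing the wall $W_i$ (in the direction away from $T$) should replace the collapsed point by a $\P^{n_i-1}$: on that side the stable objects with class $[pt]$ supported ``near $C_i$'' are parametrized by $\Ext^1(\O_{C_i}(-1)[1], \O_{C_i}) \cong H^0(C_i, \O(n_i-2))^\vee$ or similar, whose projectivization is $\P^{n_i-1}$, and the rational normal curve of degree $n_i-1$ inside it is the locus of the ``old'' objects $\O_x$, $x \in C_i$. Since the $r$ walls are transversal and independent, crossing any subset $\{i_1,\dots,i_s\}$ simultaneously glues in the corresponding $\P^{n_{i_j}-1}$'s, each along its rational normal curve identified with $C_{i_j} \subset S$, with no further identifications because the curves are disjoint. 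I would make this precise by a local (étale or formal) wall-crossing analysis near each $C_i$ — modeling the moduli space via the dg Lie algebra / Maurer–Cartan description advertised in the abstract — and then globalizing. \textbf{The main obstacle} I anticipate is precisely this last step: showing that the local wall-crossing picture (a well-understood flip/flop-type modification near a single $(-n)$-curve) glues to give the global statement, and in particular that the moduli space on a chamber is \emph{reduced} and is exactly the stated union with the stated gluing — this requires controlling the obstruction theory (via the explicit dg Lie algebra model) to rule out embedded or nilpotent structure, and checking that no ``unexpected'' semistable objects appear at the intersection of several walls. The disjointness of the $C_i$ is what makes this tractable, but the bookkeeping of S-equivalence classes at a multiple intersection point of walls is the delicate part.
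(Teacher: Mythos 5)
Your overall strategy coincides with the paper's: take the limit stability condition $\overline{\sigma}_{\beta,f^\ast\eta}$ from the earlier work, identify the polystable objects $\O_x$ ($x\notin\bigcup C_i$) and $\O_{C_i}\oplus\O_{C_i}(-1)[1]$, read off the $r$ numerical walls and their transversality from disjointness of supports, and obtain each chamber's moduli space by gluing in a $\P^{n_i-1}$ along a rational normal curve, with reducedness controlled by the DGLA model. Two points, however, deserve correction. First, a computational slip: the component glued in after crossing $W_i$ is $\P\Ext^2(\O_{C_i},\O_{C_i}(-1))\cong\P^{n_i-1}$ (an $n_i$-dimensional Ext group, dual to $\Hom(\O_{C_i}(-1),\O_{C_i}(n_i-2))$), not the projectivization of $\Ext^1(\O_{C_i}(-1)[1],\O_{C_i})\cong H^0(\O(n_i-2))^\vee$, which has the wrong dimension; the destabilizing sequence on the geometric side is $0\to\O_{C_i}\to\O_x\to\O_{C_i}(-1)[1]\to 0$.

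The genuine gap is in part (2). The Grauert--Artin contraction criterion identifies the singularity of $T$, and the universal-property comparison only produces a morphism between $T$ and $M_\sigma([pt])$ that is bijective on closed points; neither establishes that $M_\sigma([pt])$ has the correct (in particular, reduced and normal) local ring at the point corresponding to the strictly semistable locus. This is precisely the new content of the paper: since the closed point of $M_0$ over $C_i$ corresponds to the \emph{polystable} object $E=\O_{C_i}\oplus\O_{C_i}(-1)[1]$ with $\Aut(E)\cong(\C^\ast)^2$, one has $\hat{\O}_{M_0,p}\cong\bigl(\C[[\Ext^1(E,E)]]/I\bigr)^{\Aut(E)}$, so one must compute an explicit $\Aut(E)$-equivariant hull of $\Def_E$ (the paper finds $\C[[p_1,\dots,p_{n_i},q_0,q_1]]/(p_jq_0+p_{j+1}q_1)$ with weights $(-1,1)$ and $(1,-1)$) and then take invariants, which yields the determinantal presentation of the $\frac{1}{n_i}(1,1)$ germ. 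Your alternative phrase ``directly from the local model of the moduli functor'' points in the right direction, but without the invariant-theoretic step for the nontrivial stabilizer the identification $M_0\cong T$ is not established; the rest of your plan (transversality, localization to each $C_i$, reducedness on the open chambers via the single-curve computation) matches the paper.
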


The second set of examples arises from the contraction of a chain of two smooth, rational curves to a cyclic quotient singularity.

\begin{teo} \label{teo:intro_inter}
Let $S$ be a smooth, projective surface. Assume that there are two smooth, rational curves $C_1, C_2 \subseteq S$ intersecting transversally at a single point, with $C_i^2=-n_i$ and $n_i \geq 3$. Then, there exists a stability condition $\sigma \in \Stab(S)$, contained in the closure of the geometric chamber, satisfying the following:
\begin{enumerate}
\item There are three walls $W_1, W_2, W_{12}$ with respect to $[pt]$ passing through $\sigma$, dividing $\Stab(S)$ around $\sigma$ into six regions.

\item For one of the chambers, the moduli space of semistable objects with numerical class $[pt]$ is isomorphic to $S \cup \P^{n_1+n_2-3} \cup \Bl_{pt} \P^{n_1-1}$, glued as follows. The third component is glued along its exceptional divisor to a linear $\P^{n_1-2} \subseteq \P^{n_1+n_2-3}$, while $S$ is glued along $C_2$ to a rational normal curve in a complementary $\P^{n_2-1} \subseteq \P^{n_1+n_2-3}$, passing through the intersection of both subspaces. At last, $C_1 \subseteq S$ is glued along the strict transform of a rational normal curve in $\P^{n_1-1}$ passing through the blown-up point.
\end{enumerate}
\end{teo}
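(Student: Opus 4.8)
The plan is to reduce the statement to a local study of $\Stab(S)$ near the stability condition $\sigma$ furnished by our earlier construction, and then to run the wall-crossing out of the geometric chamber one wall at a time, in the spirit of Tramel and Xia. By construction $\sigma$ lies in the closure of the geometric chamber and is built so that $C_1$ and $C_2$ are simultaneously destabilising for $[pt]$: for suitable twists, the relevant objects supported on $C_i$ have the same $\sigma$-phase as $[pt]$, and so does a corresponding object supported on the whole chain $C_1\cup C_2$. The first step is to enumerate the walls for $[pt]$ through $\sigma$. A numerical/support argument — any sub- or quotient object that can destabilise a class-$[pt]$ object near $\sigma$ is scheme-theoretically supported on $C_1$, on $C_2$, or on $C_1\cup C_2$ — together with a run through the possible Harder--Narasimhan data, leaves exactly three candidates: two walls $W_1,W_2$ of the type already analysed for a single negative curve (with $W_i$ affecting the point-like objects over $C_i$, the node $p=C_1\cap C_2$ included), and one further wall $W_{12}$ governed by an object supported on the whole chain and affecting only the object over $p$. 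Since the classes of the three destabilisers are, modulo $[pt]$, a rank-two configuration with the ``chain'' class equal to the sum of the other two, the tangent hyperplanes $T_\sigma W_1,T_\sigma W_2,T_\sigma W_{12}$ are three distinct members of a pencil meeting pairwise transversally; this gives the six regions of part~(1).

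For part~(2) I fix the chamber $\Cchamb$ appearing in the statement — it is one of the chambers on which $C_1$ plays a distinguished role, reached from the geometric chamber by a sequence of crossings that includes $W_1$ and $W_{12}$, the exact sequence being recorded along the way — and I classify the $\sigma'$-stable objects of class $[pt]$ for $\sigma'\in\Cchamb$, sorting them into the three asserted components. The objects whose support avoids $C_1\cup C_2$ are the skyscrapers $\O_x$; these, together with the modified point-like objects over $C_1\cup C_2$, form the copy of $S$. Crossing $W_1$ replaces the point-like objects over $C_1$ by a projective space $\P^{n_1-1}$ of extensions (a projectivised $\Ext$-space of twists of $\O_{C_1}$), inside which a rational normal curve records the genuinely point-like members and along which $S$ is attached; crossing $W_2$ would produce the analogous picture along $C_2$. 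The new phenomenon is the effect of $W_{12}$ at the node: the relevant objects there are ``two-sided'' — built from twists of $\O_{C_1}$ and of $\O_{C_2}$ glued over $p$ — and their moduli is the join $\P^{n_1-2}\ast\P^{n_2-2}=\P^{n_1+n_2-3}$, the two linear subspaces being the objects living on a single branch and a general point of the join recording how the data is distributed between the branches; simultaneously the object sitting over the rational-normal-curve point attached to $p$ loses stability in $\Cchamb$ and is resolved into a $\P^{n_1-2}$ of nearby stable objects, so the $\P^{n_1-1}$ from $W_1$ is blown up there, producing $\Bl_{pt}\P^{n_1-1}$. Matching limits of one-parameter families of stable objects then pins down the three incidences in the statement — exceptional divisor of the blow-up against the linear $\P^{n_1-2}$ in the join, $S$ against a rational normal curve in the complementary linear subspace through the apex, $C_1\subset S$ against the strict transform through the blown-up point — and that there are no others.

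Finally I must upgrade these set-theoretic identifications to an isomorphism of schemes: the moduli space must be reduced and have exactly the asserted local structure along the gluing loci, and in particular at the blown-up point and the apex of the join. Here I use the explicit differential graded (equivalently $L_\infty$) models for the deformation complexes $\Ext^{\bullet}(E,E)$ at the objects $E$ in play. Away from the node the controlling dg Lie algebra is the one already analysed for a single negative curve: its Maurer--Cartan scheme is smooth of the expected dimension, and this produces the transverse gluing of $S$ with the projective spaces along the rational normal curves. At the objects supported on $C_1\cup C_2$ the dg Lie algebra is strictly larger — $\Ext^1$ jumps, and $\Ext^2$ together with the induced bracket and lowest Massey products is non-trivial — and the task is to compute its Maurer--Cartan scheme and recognise it as a chart of the join, respectively of the blow-up. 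This last computation is where the real work lies: at $p$ the two destabilising mechanisms interact, so both the wall combinatorics and the obstruction theory become genuinely two-dimensional, and it is exactly this interaction that forces the appearance of the blow-up — the one feature absent from Theorem~\ref{teo:intro_disjoint}. With the dg Lie algebra computation done, assembling the global moduli space from the local charts according to the incidence data above is routine.
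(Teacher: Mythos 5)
Your overall architecture matches the paper's: take the limiting stability condition from \cite{Vil25}, read off the three walls from the stable factors of the polystable objects, cross $W_1$ and then $W_{12}$ performing elementary modifications, and control reducedness via explicit DGLA/obstruction computations. The pencil argument for part (1) (the class of the chain destabilizer being, modulo $[pt]$, the negative of the sum of the other two, so the three wall equations span a pencil) is a clean way to get the six regions and is consistent with the paper's explicit computation in the $(\epsilon_1,\epsilon_2)$-slice. However, there is a concrete error in your wall analysis: you assert that $W_{12}$ affects ``only the object over $p$.'' It does not. Crossing $W_{12}$ from the chamber $M_2 \cong S \sqcup_{C_1}\P^{n_1-1}$, the destabilized locus is the entire curve $\overline{C_2\setminus C_1}$: every $\O_x$ with $x \in C_2\setminus C_1$ sits in a short exact sequence $0 \to \O_{C_{12}} \to \O_x \to \O_{C_{12}}(0,-1)[1]\to 0$ in the heart, and exactly one point of $\P^{n_1-1}$ (the limit of $C_2\setminus C_1$ in $M_2$) admits such a subobject as well. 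This is not a cosmetic point: it is precisely what forces $S$ to be re-glued along all of $C_2$ to the new component $\P^{n_1+n_2-3}=\P\Ext^2(\O_{C_{12}},\O_{C_{12}}(0,-1))$, and what makes the destabilized locus inside $\P^{n_1-1}$ a single point (hence a blow-up at a point rather than no modification at all). Your final gluing description is the correct one, but it is inconsistent with, and does not follow from, the wall picture you set up; if $W_{12}$ really touched only the node, the new component would meet $S$ in at most a point.

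The second gap is the one you yourself flag: the reducedness of the moduli space at the object supported on the whole chain. The paper shows that the primary obstruction (the quadratic part) is \emph{not} sufficient there: one must push the hull computation to order three (the relations $q_jr + q_1q_{j-1}r$ and $p_iq_1r$ appear only at the $J_3$ stage) before the stopping criterion for the versal deformation applies. Saying that ``the lowest Massey products are non-trivial'' and that the Maurer--Cartan scheme should be ``recognised as a chart of the join, respectively of the blow-up'' names the right target but does not constitute a proof; without exhibiting explicit cocycle representatives for $\Ext^1(E,E)$ on a local model and computing the brackets and the order-three corrections, you cannot exclude an embedded or non-reduced structure at the triple point, and the criterion ``proper, bijective on points and tangent vectors, target reduced $\Rightarrow$ isomorphism'' cannot be invoked. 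Also, a small imprecision: away from the node the deformation spaces along the gluing loci are not smooth of expected dimension -- they are transverse unions of two smooth branches cut out by the rank-one primary obstruction -- though your subsequent sentence shows you intend the correct local model.
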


\subsection{Structure of the paper}

We will start with a review of Bridgeland stability conditions: Section \ref{sec:stability} will be devoted to the general theory, while Section \ref{sec:stabsurf} will discuss the special case of surfaces. 

The next two sections form the technical heart of the paper. In Section \ref{sec:irrviaWC} we describe how to relate the semistable objects on both sides of a wall. In many cases, this allows us to describe the irreducible components of the moduli space on one side of the wall in terms of the other side. From here, in order to describe the gluing of these irreducible components, we need to understand the local structure of the moduli spaces.

The local structure of a moduli space $M_\sigma(v)$ at a point $[E]$ is deeply related to the deformation theory of $E$, which in turn is encoded in the differential graded Lie algebra $R\Hom(E, E)$. We will devote Section \ref{sec:localstr} to describe explicit representatives $R\Hom(E, E)$. This will give us an effective way to compute the local structure.

We will apply this discussion to describe moduli spaces arising from various stability conditions on surfaces. First, we will use the stability conditions of \cite{TX22} in Section \ref{sec:single}. These stability conditions are built out of the data of a contraction $S \to T$ of a single smooth, rational curve. After that, we will use the stability conditions from \cite{Vil25}, which are built out of the data of a contraction of multiple smooth, rational curves. For Section \ref{sec:disjoint} we will use a contraction of multiple disjoint curves, proving Theorem \ref{teo:intro_disjoint}. Finally, we will use a contraction of two intersecting curves in Section \ref{sec:inter}, which will prove Theorem \ref{teo:intro_inter}.

\subsection{Conventions}

We will work over the complex numbers. Given a smooth, projective surface $S$ and $E \in \DC^b(S)$, we let $\ch(E)$ to be the Chern character of $E$. Given $\beta \in \NS(S)_\R$, we set $\ch^\beta(E) = \ch(E).\exp(-\beta)$. We have $\ch_0^\beta(E) = \ch_0(E)$, $\ch_1^\beta(E) = \ch_1(E)-\beta.\ch_0(E)$, and $\ch_2^\beta(E) = \ch_2(E)-\beta.\ch_1(E) + \frac{\beta^2}{2}\ch_0(E)$.

If $A, B \in \DC^b(S)$, we will denote $\Ext^i(A, B) = \Hom(A, B[i])$. We have composition maps $\Ext^i(B, C) \times \Ext^j(A, B) \to \Ext^{i+j}(A, C)$, which we denote by $\circ$.  

At last, a \emph{vector bundle} $E$ on $X$ will be a locally free sheaf of constant rank $r$. If $\{U_i\}_{i \in I}$ is a trivializing cover, with $\alpha_i\colon \O_{U_i}^{\oplus r} \to E|_{U_i}$, we denote by $f_{ij} = \alpha_j^{-1} \circ \alpha_i$ the transition functions, so that the cocycle condition $f_{jk} \circ f_{ij} = f_{ik}$ holds.

\subsection{Acknowledgements}

First and foremost, I would like to thank my PhD advisor, Giulia Saccà. I am deeply grateful for many discussions in the last year, especially on deformation theory. More than that, her constant support has helped me on every step of this project.

I would like to thank Rafah Hajjar and Laura Pertusi for various discussions around this project. This project was done in parallel to \cite{Vil25}; as such, I am grateful to Arend Bayer and Tzu-Yang Chou for informing me of the related work \cite{Cho24}.

This work was partially supported from the Simons Foundation (grant number SFI-MPS-MOV-00006719-09), and by the NSF (grant number DMS-2052934).

\section{Bridgeland stability conditions} \label{sec:stability}

Let us start by recalling the definition of a stability condition, following \cite{BM23}*{\textsection 2.1}. To do so, we fix a smooth, projective variety $X$, a finite rank lattice $\Lambda$, and an homomorphism $v\colon K(\DC^b(X)) \to \Lambda$. A \emph{Bridgeland stability condition} on $X$ (with respect to $(\Lambda, v)$) is a pair $\sigma=(Z, \Pslicing)$ satisfying the following properties:
\begin{enumerate}[label=(\alph*)]
\item $\Pslicing$ is a \emph{slicing} of $\DC^b(X)$: a collection of full subcategories $\{ \Pslicing(\phi) \}_{\phi \in \R}$ subject to the following relations:
\begin{itemize}
\item For all $\phi$, we have $\Pslicing(\phi+1) = \Pslicing(\phi)[1]$.
\item Given $\phi_1>\phi_2$ and $E_i \in \Pslicing(\phi_i)$, we have $\Hom(E_1, E_2)=0$. 
\item For any $E \in \D$, there is a sequence of maps 
\[ 0=E_0 \xrightarrow{i_1} E_1 \xrightarrow{i_2} \dots \xrightarrow{i_m} E_m=E, \]
and real numbers $\phi_1>\dots > \phi_m$, such that the cone of $i_k$ is in $\Pslicing(\phi_k)$ for each $k$. The cones are called the \emph{Harder--Narasimhan factors} of $E$.
\end{itemize}
\item $Z\colon \Lambda \to \C$ is a $\Z$-linear map, called the \emph{central charge}. 
\end{enumerate}
We write $Z(E)=Z(v(E))$. We impose the following compatibility condition:
\begin{enumerate}[label=(\alph*), resume]
\item For each non-zero $E \in \Pslicing(\phi)$, we have $Z(E) \in \R_{>0}\cdot \exp(i\pi \phi)$.
\end{enumerate}
Finally, we add the following three extra properties;
\begin{enumerate}[label=(\alph*), resume]
\item There exists a quadratic form $Q$ on $\Lambda_\R$ such that (i) $Q$ is negative definite on $\ker Z$, and (ii) $Q(E) \geq 0$ for any $E \in \Pslicing(\phi)$. 
\item The property of being in $\Pslicing(\phi)$ is open in families over any base scheme.
\item For any $\phi \in \R$ and any $v \in \Lambda$, the collection of objects $E \in \Pslicing(\phi)$ with $v(E)=v$ is bounded.
\end{enumerate}

Properties (a)--(c) define a \emph{pre-stability condition}, and they constitute the original assumptions in \cite{Bri07}. Property (d) is known as the \emph{support property}, and it is key to getting a well-behaved wall-and-chamber structure. Lastly, properties (e)--(f) will give us the existence of moduli spaces, as we will review in Theorem \ref{teo:existsmoduli_main}. We denote by $\Stab(X)$ the collection of stability conditions on $X$.

\begin{teo}[Bridgeland deformation theorem, cf. \cite{BM23}*{Theorem 2.2}]
The space $\Stab(X)$ carries a natural topology, such that forgetful map $\Stab(X) \to \Hom(\Lambda, \C)$ is a local homeomorphism. In particular, $\Stab(X)$ carries the structure of a complex manifold. 
\end{teo}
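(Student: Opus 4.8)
The plan is to follow Bridgeland's original deformation argument, in the form adapted to the support property. First I would equip $\Stab(X)$ with the topology induced by the generalized metric
\[ d(\sigma_1, \sigma_2) = \sup_{0 \neq E \in \DC^b(X)} \left\{ \abs{\phi^-_{\sigma_1}(E) - \phi^-_{\sigma_2}(E)}, \abs{\phi^+_{\sigma_1}(E) - \phi^+_{\sigma_2}(E)}, \abs{\log \tfrac{m_{\sigma_1}(E)}{m_{\sigma_2}(E)}} \right\}, \]
where $\phi^\pm_\sigma$ are the maximal and minimal Harder--Narasimhan phases and $m_\sigma(E) = \sum_i \abs{Z(A_i)}$ is the \emph{mass} (the sum over the HN factors $A_i$ of $E$). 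One checks this is a well-defined extended metric, that it is independent of the choices involved, and that the forgetful map $\Stab(X) \to \Hom(\Lambda, \C)$, $\sigma \mapsto Z$, is continuous: indeed $\abs{Z_1(v) - Z_2(v)}$ is controlled by $d(\sigma_1, \sigma_2)$ on the finitely many lattice points $v$ realized by $\sigma_i$-semistable objects of bounded mass.

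The core of the argument is local injectivity and local surjectivity near a fixed $\sigma_0 = (Z_0, \Pslicing_0)$. For injectivity one shows that a slicing within distance $< 1/2$ of $\Pslicing_0$ is uniquely determined by its central charge, since for short phase intervals the slices $\Pslicing(\phi)$ are pinned down by $Z$ together with proximity to $\Pslicing_0$. For surjectivity, given $W \in \Hom(\Lambda, \C)$ close to $Z_0$, I would construct a slicing $\Pslicing'$ so that $(W, \Pslicing')$ is a stability condition near $\sigma_0$: each $\sigma_0$-semistable object $A$ of phase $\phi$ acquires, with respect to $W$, an HN filtration whose factors have phases within a small $\epsilon$ of $\phi$, and these refined filtrations are reassembled into the new slicing. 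Here properties (d)--(f) are essential: boundedness and openness in families (properties (e)--(f)) limit the lattice classes of the destabilizing factors that can appear, and the quadratic form $Q$ from the support property (property (d)), being $\geq 0$ on semistable objects and negative definite on $\ker Z_0$, yields a uniform lower bound $\abs{Z_0(A)} \gtrsim \norm{v(A)}$ for $\sigma_0$-semistable $A$; this prevents the phases of sub- and quotient objects from spreading and guarantees that the new HN filtrations terminate and that a (slightly perturbed) quadratic form witnesses the support property for $(W, \Pslicing')$.

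The main obstacle is precisely this last construction: verifying that the refined filtrations genuinely assemble into a slicing satisfying the HN axiom and the support property, uniformly for all $W$ in a neighborhood of $Z_0$, and that the resulting assignment $W \mapsto (W, \Pslicing')$ is a continuous inverse to the forgetful map on that neighborhood. This requires careful bookkeeping of phases — showing that for $W$ in a suitably small ball no $\sigma_0$-semistable object moves in phase by more than the prescribed $\epsilon$ — combined with the finiteness inputs above. Once this is established, a neighborhood of $\sigma_0$ is identified homeomorphically with an open subset of $\Hom(\Lambda, \C) \cong \C^{\rk \Lambda}$, which gives both the local homeomorphism and, by transporting the standard complex structure, the structure of a complex manifold on $\Stab(X)$.
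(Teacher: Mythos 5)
The paper does not prove this statement: it is quoted verbatim from the literature (cf.\ \cite{BM23}*{Theorem 2.2}, which rests on Bridgeland's original argument in \cite{Bri07} together with the later reformulation via the support property), so there is no in-paper proof to compare against. Your outline does follow the standard route --- the generalized metric built from $\phi^{\pm}_\sigma$ and the mass $m_\sigma$, local injectivity of the forgetful map, and local surjectivity by deforming the central charge and refining Harder--Narasimhan filtrations, with the support property supplying the uniform bound $\abs{Z_0(A)} \gtrsim \norm{v(A)}$ that controls phase drift. These are the right ingredients.

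However, as a proof your text has a genuine gap that you yourself flag: the entire content of the theorem is the construction of the deformed slicing $\Pslicing'$ for a nearby central charge $W$ and the verification that it satisfies the HN axiom and the support property, and you defer exactly this step as ``the main obstacle'' without carrying it out. Saying that it ``requires careful bookkeeping of phases'' is a statement of the difficulty, not a resolution of it; the quantitative estimate (that $\norm{W - Z_0} < \tfrac{1}{8}\norm{Z_0}$ with respect to the $Q$-norm forces every $\sigma_0$-semistable object to remain of bounded phase spread, and that the thin filtrations glue into a locally finite slicing) is where all the work lies. Separately, your attribution of roles is slightly off: properties (e)--(f) (openness in families and boundedness) play no part in the deformation argument itself --- the finiteness of possible destabilizing classes comes entirely from the support property (d). What does need checking is that (e)--(f) are \emph{preserved} under small deformation, which is a distinct point handled in \cite{PT19} and folded into the statement of \cite{BM23}*{Theorem 2.2}; your sketch does not address it.
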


\subsection{Moduli stacks and spaces} \label{subsec:existsmoduli}

Given a stability condition $\sigma = (Z, \Pslicing)$, a vector $v \in \Lambda$, and a phase $\phi$, we consider the assignment
\begin{equation} \label{eq:existsmoduli_stack}
Z \mapsto \{ \E \in \DC_{Z-perf}(Z \times X) : \forall z \in Z, \E|_z \in \Pslicing(\phi), v(\E|_z) = v \},
\end{equation}
where $\E|_z$ denotes the derived restriction of $\E$ to $\{z\} \times X$. This defines a subfunctor $\M_\sigma(v)$ of $\M_{pug}(X)$, the algebraic stack of perfect and universally gluable objects on $X$ from \cite{Lie06}.

\begin{teo}[cf. \cite{BM23}*{Theorem 2.3}] \label{teo:existsmoduli_main}
\begin{enumerate}
\item For each numerical class $v$, the subfunctor $\M_\sigma(v)$ defines an open substack of $\M_{pug}(X)$. In particular, $\M_\sigma(v)$ is an algebraic stack.
\item Moreover, $\M_\sigma(v)$ admits a proper good moduli space $M_\sigma(v)$. If $\M_\sigma(v)$ consists only on $\sigma$-stable objects, then $\M_\sigma(v) \to M_\sigma(v)$ is a $\mathbb{G}_m$-gerbe.
\end{enumerate}
\end{teo}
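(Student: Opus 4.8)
The plan is to follow the strategy of \cite{BM23}, which combines the general existence machinery for good moduli spaces of Alper--Halpern-Leistner--Heinloth with a semistable-reduction ("Langton-type") argument carried out in the heart of $\sigma$. I would organize the argument in four stages: openness and algebraicity; finite type and affine diagonal; the two valuative criteria that produce a separated good moduli space; and finally properness together with the gerbe statement.

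For part (1), the key point is that each of the two conditions cutting $\M_\sigma(v)$ out of $\M_{pug}(X)$ is open. The condition $\E|_z \in \Pslicing(\phi)$ for all $z$ is open in families over any base by axiom (e) in the definition of $\sigma$; the condition $v(\E|_z) = v$ is locally constant on the base for a $Z$-perfect complex, hence defines an open-and-closed condition. Intersecting the two, $\M_\sigma(v)$ is an open substack of $\M_{pug}(X)$, which is algebraic by \cite{Lie06}, so $\M_\sigma(v)$ is algebraic. It also inherits from $\M_{pug}(X)$ an affine diagonal: every object $E$ it parametrizes is $\sigma$-semistable of fixed phase $\phi$, so $\Hom(E,E[<0])=0$ and $\Hom(E,E)$ is finite-dimensional, whence the isomorphism sheaves are affine.

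For part (2), I would first note that $\M_\sigma(v)$ is of finite type over $\C$: it is locally of finite type, being open in $\M_{pug}(X)$, and it is bounded because every object it parametrizes is $\sigma$-semistable of fixed phase and class $v$, which is exactly axiom (f). Having finite type and affine diagonal, the existence of a separated good moduli space $M_\sigma(v)$ reduces, by the criterion of Alper--Halpern-Leistner--Heinloth, to checking that $\M_\sigma(v)$ is $\Theta$-reductive and S-complete. For $\Theta$-reductivity one must extend, over a DVR $R$, a filtration of the generic fiber (a Harder--Narasimhan, or here a Jordan--H\"older, type filtration, all objects having equal phase) to a filtration over $\Spec R$; since the heart $\Pslicing((\phi-1,\phi])$ is noetherian -- a consequence of the support property -- this follows from the relative existence of such filtrations in an abelian category, following Abramovich--Polishchuk. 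For S-completeness one must show that two families over $R$ agreeing over the fraction field, both with $\sigma$-semistable special fibers of class $v$, become isomorphic after an elementary modification; this is a Langton-type theorem in the heart, and the support property (through the quadratic form $Q$) is precisely what forces the semistable-reduction procedure to terminate.

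The remaining, and I expect hardest, step is properness of $M_\sigma(v)$ over $\C$. Separatedness and finite type being in hand, by the valuative criterion it suffices to show that any $\sigma$-semistable object over the fraction field $K$ of a DVR $R$ extends, after a finite extension of $K$, to an $R$-point of $\M_\sigma(v)$: one first spreads it out to some family of perfect complexes over $R$ lying in the noetherian heart $\Pslicing((\phi-1,\phi])$, and then runs Langton's semistable reduction to replace the special fiber by a $\sigma$-semistable object of class $v$ without disturbing the generic fiber; the induced $R$-point of $M_\sigma(v)$ extends the original $K$-point, giving universal closedness and hence properness. Finally, if every object parametrized by $\M_\sigma(v)$ is $\sigma$-stable, then it is simple, $\Hom(E,E)=\C$, so the automorphism group of every object over every base is the group of scalars $\mathbb{G}_m$ acting centrally; thus $\M_\sigma(v)$ has constant inertia $\mathbb{G}_m$ and the good moduli space morphism $\M_\sigma(v)\to M_\sigma(v)$ is a $\mathbb{G}_m$-gerbe. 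The single genuine obstacle is the S-completeness and properness step: making the semistable-reduction process both well defined and terminating is where the full strength of the support property is needed, and where the argument is substantially more delicate than in the classical sheaf-theoretic case.
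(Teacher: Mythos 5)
Your outline is correct and matches the paper's approach: the paper proves this theorem purely by citation (part (1) to \cite{PT19}*{\textsection 4.4}, the proper good moduli space to \cite{AHLH23}*{\textsection 7}, and the gerbe statement to \cite{Lie06}*{\textsection 4.3}), and your sketch is a faithful reconstruction of the arguments carried out in those references --- openness from axiom (e), boundedness from axiom (f), $\Theta$-reductivity and S-completeness via filtration extension and Langton-type semistable reduction in the heart, and the gerbe from stable objects being simple. The only point I would flag is that noetherianity of the heart is an input to the Abramovich--Polishchuk/AHLH machinery rather than a formal consequence of the support property alone; since you are in any case deferring to the same machinery the paper cites, this does not affect the verdict.
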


\begin{proof}
Part (1) is \cite{PT19}*{\textsection 4.4}. Part (2) is \cite{AHLH23}*{\textsection 7} for the existence of the proper good moduli space, and \cite{Lie06}*{\textsection 4.3} for the $\mathbb{G}_m$-gerbe. 
\end{proof}

\begin{remark} \label{remark:existsmoduli_twisted}
Note that if $M_\sigma(v)$ parametrizes only stable objects, then there is a (twisted) universal family $\U_{M_\sigma(v)} \in \DC^b(M_\sigma(v) \times X, \alpha \boxtimes 1)$, where $\alpha$ is a Brauer class on $M_\sigma(v)$. In fact, the $\mathbb{G}_m$-gerbe $\M_\sigma(v) \to M_\sigma(v)$ admits sections étale locally (cf. \cite{Alp25}*{Proposition 6.4.17(3)}).

This way, there is an étale cover $\{U_i \to M_\sigma(v)\}$ and sections $s_i\colon U_i \to \M_\sigma(v)$. By definition, the $\sigma_i$ correspond to objects $\E_i \in \DC_{U_i-perf}(U_i \times X)$. For each $i, j$, the restrictions of $s_i$ and $s_j$ to $U_i \times_{M_\sigma(v)} U_j$ differ by some $f_{ij} \in \mathbb{G}_m(U_i \times_{M_\sigma(v)} U_j)$, as $\M_\sigma(v) \to M_\sigma(v)$ is a $\mathbb{G}_m$-gerbe. The $f_{ij}$ define a Brauer class $\alpha$, and the $\E_i$ glue to an $\alpha \boxtimes 1$-twisted universal family. 

Note that in this case the closed points of $M_\sigma(v)$ correspond to isomorphism classes of $\sigma$-stable objects with numerical vector $v$. In general, the closed points of $M_\sigma(v)$ correspond to S-equivalence classes of $\sigma$-semistable objects. See \cite{AHLH23}*{Lemma 7.19}, and the discussion in \cite{AS25}*{\textsection 2.7}.
\end{remark}

\subsection{Walls and chambers}

Fix a vector $v \in \Lambda$. Given $\sigma \in \Stab(X)$, we consider the moduli space $M_\sigma(v)$ as before. As we vary $\sigma$, we want to understand how the moduli space changes. It turns out that this behaves in a controlled way.

\begin{teo}[\citelist{\cite{Bri08}*{9.3} \cite{BM11}*{3.3}}] \label{teo:wc_main}
Fix a connected component $\Stab_0(X)$ of $\Stab(X)$. Fix $\sigma_c \in \Stab_0(X)$, and let $\Cchamb$ be the set of $\sigma_c$-semistable objects of class $v$. There exists a collection $\{ W_u \} \subseteq \Stab(X)$ of closed real codimension one submanifolds with boundary satisfying the following.
\begin{enumerate}
\item The collection $W_u$ is locally finite. 
\item Let $C \subseteq \Stab_0(X)-\bigcup_u W_u$ be a connected component of the complement. Then $E \in \Cchamb$ is $\sigma$-semistable for \emph{some} $\sigma \in C$ if and only if it is $\sigma$-semistable for \emph{all} $\sigma \in C$.
\item Each $W_u$ is contained in the inverse image of
\begin{equation} \label{eq:wc_wallformula}
\{Z\in \Hom(\Lambda, \C) : \Re Z(u) \cdot \Im Z(v_0) = \Re Z(v_0) \cdot \Im Z(u)\}
\end{equation}
under the forgetful map $\Stab_0(X) \to \Hom(\Lambda, \C)$. 
\end{enumerate}
Furthermore, assume that $v$ is primitive.
\begin{enumerate}[resume]
\item Given $C$ as in (2), we have that $E \in \Cchamb$ is stable for some $\sigma \in C$ if and only if it is stable for all $\sigma \in C$.
\item For every $\sigma=(Z, \Pslicing) \in W_u$, there exists a phase $\phi$, an object $E \in \Cchamb$ of phase $\phi$, and some $F_u \in \Pslicing(\phi)$ with $v(F_u)=u$ and with an inclusion $F_u \hookrightarrow E$ in the category $\Pslicing(\phi)$.
\end{enumerate}
\end{teo}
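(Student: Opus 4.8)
The plan is to follow the arguments of Bridgeland \cite{Bri08}*{\S 9} and Bayer--Macrì \cite{BM11}*{\S 3}, whose structure I now outline. Throughout, fix a relatively compact open neighbourhood $U$ of $\sigma_c$ inside $\Stab_0(X)$.

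The crucial input is a \emph{finiteness lemma}: there is a finite set $\mathfrak{S}=\mathfrak{S}(U,v)\subseteq\Lambda$ such that whenever $E$ is $\sigma$-semistable of class $v$ for some $\sigma=(Z,\Pslicing)\in U$ and $0\to A\to E\to B\to 0$ is a short exact sequence in $\Pslicing(\phi)$, $\phi=\phi_\sigma(E)$, with $A$ and $B$ of the same phase, then $v(A)\in\mathfrak{S}$; likewise for the classes of Harder--Narasimhan and Jordan--Hölder factors of such an $E$. Here the support property~(d) is essential: after shrinking $U$ one may fix a single quadratic form $Q$ on $\Lambda_\R$ that witnesses the support property for every $\sigma\in U$ (the stability conditions satisfying the support property with respect to a fixed $Q$ form an open set). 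Writing $u=v(A)$, one has $Q(u)\geq 0$ and $Q(v-u)\geq 0$, while equality of phases forces $Z(u)=t\,Z(v)$ with $t\in[0,1]$. Decomposing $u$ along $\ker Z_\R$ and a complement, the complementary component equals $t$ times that of $v$, hence is bounded; negative-definiteness of $Q$ on $\ker Z$ combined with $Q(u)\geq 0$ then bounds the $\ker Z$-component via a quadratic-versus-linear estimate, uniformly over the compact $\overline U$ since $Z$ and $v$'s components vary continuously. Thus $u$ lies in a bounded, hence finite, subset of $\Lambda$.

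Next I would set up the walls. For $u\in\Lambda$, let $W_u^{\mathrm{num}}$ be the preimage of the real hypersurface~\eqref{eq:wc_wallformula} under the forgetful map $\Stab_0(X)\to\Hom(\Lambda,\C)$, which is locally a closed real-codimension-one submanifold because that map is a local homeomorphism by the deformation theorem. Define $W_u$ to be the closure in $\Stab_0(X)$ of the set of $\sigma\in W_u^{\mathrm{num}}$ admitting some $E\in\Cchamb$ with a subobject of class $u$ and the same phase as $E$; openness of phases in families (property~(e)) and boundedness (property~(f)), made uniform via $\mathfrak{S}$, show this locus is an open subset of $W_u^{\mathrm{num}}$, so $W_u$ is a closed real-codimension-one submanifold with boundary. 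Part~(3) is then immediate from $W_u\subseteq W_u^{\mathrm{num}}$, and part~(1) follows from the finiteness lemma, since any point of $U$ lying on some $W_u$ forces $u\in\mathfrak{S}$.

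Finally, parts~(2), (4), (5) follow by continuity together with the finiteness lemma. For~(5): given $\sigma\in W_u$, take $\sigma_n\to\sigma$ and $E_n\in\Cchamb$ strictly $\sigma_n$-semistable with subobjects $F_n$ of class $u$; uniform boundedness places the $E_n$ and $F_n$ in a bounded family, and a subsequential limit --- using that the hearts and the relevant $\Hom$-spaces degenerate semicontinuously along such a path --- produces the desired inclusion $F_u\hookrightarrow E$ with $F_u$ semistable of the same phase as $E$ (phases of $u$ and $v$ already agree at $\sigma$ since $\sigma\in W_u^{\mathrm{num}}$). For~(2): if $E\in\Cchamb$ were $\sigma_0$-semistable for $\sigma_0$ in a chamber $C$ but not $\sigma_n$-semistable for some $\sigma_n\to\sigma_0$ in $C$, a maximal destabilizing subobject has class in $\mathfrak{S}$, so after passing to a subsequence it has a fixed class $u$ with $\phi_{\sigma_n}(u)\geq\phi_{\sigma_n}(v)$; in the limit either the inequality is strict, contradicting $\sigma_0$-semistability, or it is an equality and the argument of~(5) gives $\sigma_0\in W_u$, contradicting $\sigma_0\in C$; instability is likewise an open condition on $\sigma$, so the semistable objects of $\Cchamb$ are the same throughout $C$. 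For~(4): when $v$ is primitive, $E\in\Cchamb$ fails to be $\sigma$-stable precisely when it admits a same-phase subobject of some class $u\notin\{0,v\}$, which by construction puts $\sigma\in W_u$; hence on a chamber stability and semistability coincide, and both are constant by~(2). I expect the main obstacle to be exactly the finiteness lemma --- and making the attendant limiting arguments rigorous in the presence of a heart that varies with $\sigma$; once the support property confines destabilizing classes to a finite set near every point, the remainder is standard continuity and boundedness bookkeeping.
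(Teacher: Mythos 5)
The paper gives no proof of this theorem: it is quoted directly from Bridgeland's Proposition 9.3 and Bayer--Macr\`i's Proposition 3.3, so the only meaningful comparison is with those references, and your sketch follows exactly their strategy --- the support-property finiteness lemma confining potentially destabilizing classes to a finite set $\mathfrak{S}$ over a compact neighbourhood, followed by continuity and boundedness bookkeeping for parts (2)--(5). The two places where your outline is thinnest (showing the destabilized locus is open in $W_u^{\mathrm{num}}$ so that its closure really is a submanifold with boundary, and the ``subsequential limit'' in part (5), which over a finite-type parameter space must be replaced by a constructibility/specialization argument on a relative Quot-type space) are precisely where the cited proofs spend their effort, but your key estimate and overall architecture are correct.
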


We call the $\{W_u\}$ the \emph{walls} corresponding to the vector $v$. Using Theorem \ref{teo:wc_main} we can describe the walls passing through $\sigma_0$ by computing the $\sigma_0$-semistable objects of numerical class $v$, and then finding their semistable factors.

\section{Stability conditions on surfaces} \label{sec:stabsurf}

As we mentioned in the introduction, we are mostly interested in stability conditions on surfaces. In this section we will review various results about their existence and the collection of semistable objects whose numerical class is that of a point.

\subsection{Arcara--Bertram}

Our starting point is the following result of about existence of stability conditions on (smooth, projective) surfaces, due to Arcara--Bertram. We point out that for K3 surfaces a similar construction was performed by Bridgeland in \cite{Bri08}. This has also been adapted to normal surfaces in \cite{Lan24}.

Let us fix some notation. Fix a smooth, projective surface $S$, and set $\Lambda=K^{\text{num}}(S)$. Given $\beta \in \NS(S)_\R$ and $\omega \in \Amp(S)_\R$, we let 
\begin{align*}
\Ttors_{\beta, \omega} &= \langle \{ E: E \text{ torsion} \} \cup \{E: E \text{ torsion-free, stable, } \mu(E)>\beta.\omega \} \rangle, \\
\Ftors_{\beta, \omega} &= \langle \{ E: E \text{ torsion-free, stable, }\mu(E)\leq \beta.\omega\} \rangle.
\end{align*}
These subcategories of $\Coh(S)$ define a \emph{torsion pair}, cf. \cite{AB13}*{p. 6}. We let $\A_{\beta, \omega} \subseteq \DC^b(S)$ be the associated tilt (cf. \cite{AB13}*{p.5}).

\begin{teo}[\citelist{\cite{AB13} \cite{Tod08}*{\textsection 4}}]
Let $S$ be a smooth, projective surface. Given $\beta \in \NS(S)_\R, \omega \in \Amp(S)_\R$, there exists a stability condition $\sigma_{\beta, \omega} =(Z_{\beta, \omega}, \Pslicing_{\beta, \omega})$, with
\begin{equation} \label{eq:AB_ccharge}
Z_{\beta, \omega}(E) = -\ch_2^\beta(E) +\frac{\omega^2}{2}\ch_0(E) + i\omega.\ch_1^\beta(E),
\end{equation}
and $\Pslicing_{\beta, \omega}((0, 1])=\A_{\beta, \omega}$. Moreover, the assignment $(\beta, \omega) \mapsto \sigma_{\beta, \omega}$ defines a continuous map $\Sigma\colon \NS(S)_\R \times \Amp(S)_\R \to \Stab(S)$. 
\end{teo}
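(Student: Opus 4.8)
The plan is to reconstruct the Arcara--Bertram stability conditions in a self-contained way, following the standard tilting construction, and then to verify the two claims: that $\sigma_{\beta,\omega}$ is a genuine stability condition (properties (a)--(f) in the definition), and that the assignment $(\beta,\omega)\mapsto\sigma_{\beta,\omega}$ is continuous. I would organize the proof into four steps, corresponding to (i) checking that $(Z_{\beta,\omega},\A_{\beta,\omega})$ defines a pre-stability condition, (ii) establishing the support property via an explicit quadratic form, (iii) verifying the openness and boundedness properties (e)--(f), and (iv) the continuity statement.

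\textbf{Step 1: pre-stability.} First I would recall that by the general theory of tilting (Happel--Reiten--Sm\o l{}ich, as in \cite{AB13}*{p.5}), the tilt $\A_{\beta,\omega}$ of $\Coh(S)$ at the torsion pair $(\Ttors_{\beta,\omega},\Ftors_{\beta,\omega})$ is the heart of a bounded $t$-structure on $\DC^b(S)$, hence abelian. The key computation is that $Z_{\beta,\omega}$ takes values in the upper half-plane $\overline{\mathbb{H}}$ on nonzero objects of $\A_{\beta,\omega}$ with the positivity condition on the real axis; this reduces, by the structure of the heart, to checking it on objects of $\Ttors_{\beta,\omega}[0]$ and $\Ftors_{\beta,\omega}[1]$ and ultimately on $\mu_\omega$-stable torsion-free sheaves and torsion sheaves, using the Hodge index theorem to control $\ch_2^\beta$ when $\omega.\ch_1^\beta = 0$. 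Then one invokes the standard criterion (\cite{Bri07}) that a stability function on a Noetherian, finite-length heart with the HN property gives a pre-stability condition; finite length of $\A_{\beta,\omega}$ with respect to the phases and the HN filtrations follow from the discreteness of the image of $Z$ in the relevant sense, combined with Noetherianity of $\Coh(S)$.

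\textbf{Step 2: support property.} The core of the argument is to exhibit the quadratic form $Q$ of property (d). The standard choice is the discriminant-type form $Q_{\beta,\omega}(E) = (\omega.\ch_1^\beta(E))^2 - 2\omega^2\big(\ch_0(E)\ch_2^\beta(E)\big) + (\text{positive multiple of})\,\ch_1^\beta(E)^2\text{-type corrections}$; more precisely one uses the Bogomolov-type inequality together with the Hodge index theorem on $\omega^\perp$. One checks $Q_{\beta,\omega}$ is negative definite on $\ker Z_{\beta,\omega}$ (a codimension-two real subspace) and nonnegative on all $\sigma_{\beta,\omega}$-semistable objects --- for semistable sheaves this is Bogomolov's inequality, and for general objects of $\A_{\beta,\omega}$ one argues on HN/JH factors, which are torsion-free semistable sheaves or torsion sheaves. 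This is the step I expect to be the \textbf{main obstacle}: getting the quadratic form exactly right so that it is simultaneously negative definite on $\ker Z$ and nonnegative on all semistable objects requires a careful interplay between the Bogomolov inequality and the Hodge index theorem, and the inequality for torsion sheaves supported on curves (where $\ch_0=0$) needs separate attention.

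\textbf{Steps 3--4: remaining properties and continuity.} For (e) and (f), openness of membership in $\Pslicing(\phi)$ in families and boundedness of semistable objects of fixed class and phase follow from boundedness results for the tilted hearts --- these are essentially in \cite{Tod08}*{\textsection 4} and rely on the fact that objects of $\A_{\beta,\omega}$ sit in a two-step filtration by shifted sheaves of bounded Chern character. Finally, for continuity of $\Sigma$, I would note that $Z_{\beta,\omega}$ depends real-analytically (indeed polynomially) on $(\beta,\omega)$ by formula \eqref{eq:AB_ccharge}, so the map to $\Hom(\Lambda,\C)$ is continuous; since the forgetful map $\Stab(S)\to\Hom(\Lambda,\C)$ is a local homeomorphism, it suffices to check that for $(\beta',\omega')$ near $(\beta,\omega)$ the stability condition $\sigma_{\beta',\omega'}$ lies in the small neighborhood of $\sigma_{\beta,\omega}$ on which the forgetful map is injective --- this is a standard deformation argument, using that the heart varies tamely and that $Z$ moves by a small perturbation, so Bridgeland's deformation theorem applies.
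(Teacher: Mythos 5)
This theorem is quoted in the paper without proof, the argument being deferred entirely to \cite{AB13} and \cite{Tod08}*{\textsection 4}; your outline is a faithful reconstruction of the standard proof in those references (tilting at the $\mu_\omega$-slope torsion pair, positivity of $Z$ via Bogomolov plus the Hodge index theorem, a discriminant-type quadratic form for the support property, and Bridgeland's deformation theorem for continuity), so there is nothing in the paper to compare it against beyond agreement with the cited sources. The one step where your sketch is lighter than the literature is the Harder--Narasimhan property for irrational $\beta,\omega$, where ``discreteness of the image of $Z$'' fails literally and must be replaced by a boundedness argument for the $\ch_1$-classes of subobjects of a fixed object; this is precisely the ``relevant sense'' you allude to and is handled in the cited references.
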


\begin{prop}[\cite{AB13}*{p. 8}] \label{prop:AB_moduli}
Let $S, \beta, \omega$ be as above. The $\sigma_{\beta, \omega}$-semistable objects of phase 1 and numerical vector $[pt]$ consists of the skyscraper sheaves $\{ \O_x: x \in S \}$. All of them are stable, and $M_{\sigma_{\beta, \omega}}([pt]) \cong S$. 
\end{prop}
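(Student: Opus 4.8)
The plan is to pin down the semistable objects first and then identify the moduli space. Observe that the class $[pt]$ has $\ch = (0,0,1)$, so $Z_{\beta,\omega}([pt]) = -1$; hence any object of class $[pt]$ lying in the tilted heart $\A_{\beta,\omega} = \Pslicing_{\beta,\omega}((0,1])$ automatically has phase $1$, and conversely every $\sigma_{\beta,\omega}$-semistable object of phase $1$ lies in $\Pslicing_{\beta,\omega}(1) \subseteq \A_{\beta,\omega}$. So I would reduce the statement to three claims: (a) every $E \in \A_{\beta,\omega}$ with $v(E) = [pt]$ is isomorphic to a skyscraper $\O_x$; (b) each $\O_x$ is $\sigma_{\beta,\omega}$-stable; (c) the good moduli space is $S$.

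For (a) I would run the standard cohomology-sheaf argument. Write the triangle $H^{-1}(E)[1] \to E \to H^0(E)$ for the standard heart, so $H^{-1}(E) \in \Ftors_{\beta,\omega}$ and $H^0(E) \in \Ttors_{\beta,\omega}$. Since $\Im Z_{\beta,\omega}(A) = \omega.\ch_1^\beta(A) \ge 0$ for every $A \in \A_{\beta,\omega}$, while $\omega.\ch_1^\beta(H^{-1}(E)) \le 0$ because $H^{-1}(E) \in \Ftors_{\beta,\omega}$ is torsion-free, the equality $\Im Z_{\beta,\omega}(E) = 0$ forces $\omega.\ch_1^\beta(H^{-1}(E)) = \omega.\ch_1^\beta(H^0(E)) = 0$. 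Splitting $H^0(E)$ into its torsion subsheaf and its torsion-free quotient and using $\Hom(\Ttors_{\beta,\omega}, \Ftors_{\beta,\omega}) = 0$, the torsion-free quotient has $\omega.\ch_1^\beta > 0$ unless it is zero, and the torsion part has $\omega.\ch_1^\beta \ge 0$ with equality only if it is $0$-dimensional; hence $H^0(E)$ is a $0$-dimensional sheaf. Then $\ch_0(H^0(E)) = 0$, so $\ch_0(H^{-1}(E)) = 0$ (as $\ch_0(E) = 0$), so the torsion-free sheaf $H^{-1}(E)$ vanishes, so $E = H^0(E)$ is a $0$-dimensional sheaf with $\ch_2(E) = 1$, i.e.\ $E \cong \O_x$ for a unique $x \in S$.

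For (b), since the phase of $\O_x$ is maximal in $\A_{\beta,\omega}$, it suffices to show $\O_x$ is a simple object of $\A_{\beta,\omega}$. Given $0 \to F \to \O_x \to G \to 0$ in $\A_{\beta,\omega}$ with $F \neq 0$: additivity of $\Im Z_{\beta,\omega}$ and its nonnegativity force $\Im Z_{\beta,\omega}(F) = 0$, so the argument of (a) shows $F = H^0(F)$ is a $0$-dimensional sheaf (here $H^{-1}(F) = 0$ already, since it injects into $H^{-1}(\O_x) = 0$ in the sheaf-cohomology long exact sequence); a short diagram chase with that long exact sequence then shows the sequence is in fact exact in $\Coh(S)$, so $F$ is a genuine subsheaf of $\O_x$, whence $F = \O_x$ and $G = 0$. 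Thus $M_{\sigma_{\beta,\omega}}([pt])$ has closed points exactly $\{\O_x : x \in S\}$, all stable. For (c), the structure sheaf $\O_\Delta$ of the diagonal is a flat family of such objects, inducing a morphism $\rho\colon S \to M_{\sigma_{\beta,\omega}}([pt])$ which is bijective on points by (a) and (b); I would conclude $\rho$ is an isomorphism by checking it is étale — the Kodaira--Spencer map of $\O_\Delta$ identifies $T_x S$ with $\Ext^1_S(\O_x, \O_x)$, and by (a) applied in families the deformations of $\O_x$ inside $\A_{\beta,\omega}$ remain skyscrapers — so that $\rho$ is an étale bijection, hence an isomorphism.

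I expect the main obstacle to be part (a): controlling \emph{arbitrary} objects of the tilted heart $\A_{\beta,\omega}$ of class $[pt]$, in particular ruling out a nonzero $H^{-1}(E)$. The second delicate point is the stability in (b), where one must exclude destabilizing subobjects of vanishing central charge — handled above by showing every nonzero subobject of $\O_x$ in $\A_{\beta,\omega}$ is already a subsheaf. Finally, upgrading the bijection in (c) to an isomorphism of schemes (rather than merely of point-sets) requires the deformation-theoretic input that the local structure of $M_{\sigma_{\beta,\omega}}([pt])$ matches that of $S$, for which the universal family $\O_\Delta$ is the key.
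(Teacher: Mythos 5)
Your proposal is correct and follows the standard Arcara--Bertram argument; the paper does not reprove this statement but simply cites \cite{AB13}*{p.~8}, where essentially this reasoning appears (the $\Im Z=0$ analysis forcing $H^0(E)$ to be zero-dimensional and $H^{-1}(E)$ to vanish by the rank count, then simplicity of $\O_x$ in the tilted heart). The one step that deserves slightly more care is the final claim that $\rho\colon S \to M_{\sigma_{\beta,\omega}}([pt])$ is \'etale: since $\Ext^2(\O_x,\O_x)\neq 0$, unobstructedness is not automatic, and ``(a) applied in families'' needs the standard lemma that a family over an Artinian base whose closed fiber is a sheaf is itself a flat family of sheaves, so that deformations of $\O_x$ as an object of $\DC^b(S)$ agree with deformations as a length-one sheaf and are pro-represented by $\hat{\O}_{S,x}$; with that in place your conclusion holds.
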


\subsection{Limits}

Given a (smooth, projective) surface $S$, the Arcara--Bertram construction provides plenty of stability conditions, parametrized by $\beta \in \NS(S)_\R$ and $\omega \in \Amp(S)_\R$. Producing more explicit examples (besides composing with the $\Aut(\DC^b(X))$ and $\tilde{\GL}_2^+(\R)$ actions) seems to be a difficult question. 

A natural approach is trying to describe stability conditions that lie in the closure of the set $\{\sigma_{\beta, \omega}: \beta \in \NS(S)_\R, \omega \in \Amp(S)_\R\}$. By continuity, such stability conditions will have central charge $Z_{\beta, \lambda}$ as in \eqref{eq:AB_ccharge}, but with $\lambda \in \Nef(S)$. We will focus on the case when $\lambda=f^\ast \eta$, where $f\colon S \to T$ is a birational morphism to a normal, projective surface $T$ and $\eta \in \Amp(T)_\R$.

\begin{question} \label{question:limits_main}
Let $f\colon S \to T$ be a birational morphism from a smooth, projective surface $S$ to a normal, projective surface $T$. Let $\beta \in \NS(S)_\R$ and $\eta \in \Amp(T)_\R$ be given. Is there a stability condition $\overline{\sigma}_{\beta, f^\ast\eta} \in \Stab(S)$ with central charge $Z_{\beta, f^\ast\eta}$ such that
\[ \lim_{\omega \in\Amp(S), \omega \to f^\ast\eta} \sigma_{\beta, \omega} = \overline{\sigma}_{\beta, f^\ast\eta} \]
in the topology of $\Stab(S)$?
\end{question}

Question \ref{question:limits_main} has been studied extensively as an attempt to produce new examples of Bridgeland stability conditions on surfaces. In the next theorem we have collected various results proven in this direction, including the recent \citelist{\cite{Cho24} \cite{Vil25}}.

\begin{teo} \label{teo:limits_all}
Let $S$ be a smooth, projective variety and let $f\colon S \to T$ be a birational morphism to a normal, projective surface $T$. Let $\beta \in \NS(S)_\Q$ and $\eta \in \Amp(T)_\Q$. Then, Question \ref{question:limits_main} has a positive answer in the following cases:
\begin{enumerate}
\item If $S$ is a K3 surface, $f$ is crepant, and $\beta\in \NS(S)_\Q$ satisfies $ \beta.(\sum a_i C_i) \notin \Z$ for any $C_i \in \Exc(f)$ and any $a_i \in \Z$ with $(\sum a_iC_i)^2 = -2$. 
\item If $f$ is the contraction of a $(-1)$-curve and $\beta= 0$. 
\item If $f$ is the contraction of a $(-n)$-curve $C$ and $\beta.C + n/2 \notin \Z$.
\item If $f$ is crepant, $T$ only has one singularity, and $\beta$ satisfies $\beta.C_i >0$ for all $C_i \in \Exc(f)$, $\beta.f^\ast\eta=0$, and $\beta.\sum a_i C_i=0$ for the fundamental cycle of the singularity. 
\item If each irreducible component of $\Exc(f)$ is a chain of rational curves $C_{i,1} \cup \dots \cup C_{i,r_i}$ with no $(-1)$-curves intersecting other curves in $\Exc(f)$, and no $(-2)$-curves intersecting more then one other curve in $\Exc(f)$; and $\beta \in \NS(S)_\Q$ satisfying the conditions:
\begin{itemize}
\item There are $k_{i,j}\in \Z$ with $k_{i,j}-1<\beta.C_{i,j}+C_{i,j}^2/2 < k_{i,j}$ for all $i,j$,
\item $\beta.(C_{i,j}+\dots + C_{i,j'})+(C_{i,j}^2+\dots+C_{i,j'})/2 < (k_{i,j}+\dots+k_{i,j'})-(j-j')$ for all $i$ and all $j\leq j'$.
\end{itemize}
\end{enumerate}
\end{teo}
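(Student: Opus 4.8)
The plan is not to reprove everything from scratch: Theorem~\ref{teo:limits_all} is a compilation of results that have appeared, in various forms, in the literature, and I would organize the write-up accordingly. Case (3), the contraction of a single $(-n)$-curve $C$ with $\beta.C + n/2 \notin \Z$, is \cite{TX22}; case (2) is its $n=1$, $\beta = 0$ specialization, also treated in earlier work on extremal contractions (cf. \cite{Tod13}); case (4) is \cite{Cho24}; case (5) is \cite{Vil25}; and case (1) is essentially the description of the boundary of the geometric chamber on a K3 surface, where the hypothesis on $\beta$ is exactly what keeps the spherical sheaves supported on the exceptional $(-2)$-curves out of the limiting heart (cf. \cite{Bri08}). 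So the substantive content of a self-contained proof is an exposition of the mechanism common to all cases, which I would present once and then specialize to each case.

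I would run that mechanism as follows. Since $\omega \to f^\ast\eta$ through ample classes, the central charges converge, $Z_{\beta,\omega} \to Z_{\beta, f^\ast\eta}$ in $\Hom(\Lambda, \C)$; as $\Stab(S) \to \Hom(\Lambda, \C)$ is a local homeomorphism, it is enough to (a) construct a pre-stability condition $\overline{\sigma}_{\beta, f^\ast\eta} = (Z_{\beta, f^\ast\eta}, \overline{\A})$, (b) verify its support property, and (c) check that the $\sigma_{\beta,\omega}$ for $\omega$ near $f^\ast\eta$ are precisely the small deformations of $\overline{\sigma}_{\beta, f^\ast\eta}$ obtained by moving $\omega$, so that $\sigma_{\beta,\omega} \to \overline{\sigma}_{\beta, f^\ast\eta}$ in the manifold topology. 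For (a) I would obtain $\overline{\A}$ as a tilt of the Arcara--Bertram heart $\A_{\beta, \omega}$, with $\omega$ in a small cone approaching $f^\ast\eta$, at the torsion pair generated by the objects that $Z_{\beta, f^\ast\eta}$ sends to the non-positive real axis — concretely, shifts of sheaves set-theoretically supported on $\Exc(f)$ of $f^\ast\eta$-degree zero — then check that $\Im Z_{\beta, f^\ast\eta} = f^\ast\eta.\ch_1^\beta \geq 0$ on $\overline{\A}$, with equality forcing $\Re Z_{\beta, f^\ast\eta} = -\ch_2^\beta + \tfrac{(f^\ast\eta)^2}{2}\ch_0 < 0$, and supply the Harder--Narasimhan property from a standard argument (Noetherianity of the tilt together with boundedness). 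For (b) I would use a quadratic form: the Bogomolov-type discriminant works in the interior but degenerates along the boundary, so one perturbs it using the negative-definiteness of the intersection form on $\Exc(f)$, which is where the structural hypotheses in case (5) — chains of rational curves, no $(-1)$-curve meeting another exceptional curve, no $(-2)$-curve meeting more than one — are used.

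The hard part, and the step that will take the most work, is (a) together with the arithmetic half of (b): showing that $\overline{\A}$ really is the heart of a bounded t-structure and that $Z_{\beta, f^\ast\eta}$ is a stability function on it. The crux is that there must be \emph{no} nonzero $E \in \overline{\A}$ with $Z_{\beta, f^\ast\eta}(E) = 0$. Such an $E$ would be assembled from sheaves supported on $\Exc(f)$ with vanishing $f^\ast\eta$-degree and with $\ch_2^\beta = \tfrac{(f^\ast\eta)^2}{2}\ch_0$; restricting to the exceptional curves and twisting by line bundles reduces its nonexistence to an arithmetic statement about the numbers $\beta.(C_{i,j} + \dots + C_{i,j'}) + (C_{i,j}^2 + \dots + C_{i,j'}^2)/2$, and the hypotheses in each case — the non-integrality of $\beta.C + n/2$ in (3), of $\beta.(\sum a_i C_i)$ on $(-2)$-classes in (1), and the inequalities on the $k_{i,j}$ in (5) — are precisely what forces these numbers to avoid the integers or to satisfy the telescoping bounds that both rule out such $E$ and pin down which sheaves on sub-chains have central charge strictly above the real axis and which lie on it. I would first settle this combinatorial lemma for a single $(-n)$-curve, which recovers (2) and (3), then for a chain, carefully tracking how the tilt interacts with the slopes of sub-chains; once the heart and the stability-function property are in hand, the support property and the convergence in (c) are comparatively routine, modulo the perturbed-discriminant bookkeeping.
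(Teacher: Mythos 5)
Your proposal is correct and matches the paper's proof, which likewise disposes of the theorem by citing the five sources case by case ((1) \cite{Bri08}, (2) \cite{Tod13}, (3) \cite{TX22}, (4) \cite{Cho24}, (5) \cite{Vil25}); your additional sketch of the common tilting-plus-support-property mechanism is a faithful summary of what those references do but is not reproduced in the paper itself.
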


\begin{proof}
Part (1) follows directly from \cite{Bri08}*{Theorem 1.1}. Part (2) is a consequence of \cite{Tod13}*{Theorem 1.2}. Part (3) is \cite{TX22}*{Theorem 5.4}. Part (4) is \cite{Cho24}*{Theorem 1.1} (cf. \cite{LR22}*{Remark 1.3}). Lastly, part (5) is \cite{Vil25}*{Theorem 1.3}.
\end{proof}

Let us point out some relations among parts (1)--(5) in Theorem \ref{teo:limits_all}. First, parts (2) and (3) are included in the cases covered by (5). Part (4) generalizes (1) when $T$ has a single singularity. Lastly, part (5) generalizes (1) and (4) when $T$ has only $A_n$ singularities and for $\beta$ satisfying the conditions in (5).

Let us finish up this section by describing the semistable objects of the stability conditions in part (5) of Theorem \ref{teo:limits_all}. Compare this to Proposition \ref{prop:AB_moduli}.

\begin{prop} \label{prop:limits_ssfactors}
Let $f\colon S \to T$, $\beta \in \NS(S)_\Q$ and $\eta \in \Amp(T)_\Q$ be as in part (5) of Theorem \ref{teo:limits_all}. Denote by $\overline{\sigma}_{\beta, f^\ast\eta} \in \Stab(S)$ the corresponding stability condition. Also, fix $C_{i,j}$ and $k_{i,j}$ as in the theorem.

Given $x \in S \setminus \Exc(f)$, we have that $\O_x$ is $\overline{\sigma}_{\beta, f^\ast\eta}$-stable. Otherwise, if $x \in C_{i,1} \cup \dots \cup C_{i,r_i}$, then the $\overline{\sigma}_{\beta, f^\ast\eta}$-stable factors of $\O_x$ are $\O_{C_{i,1}\cup \dots \cup C_{i,r_i}}(k_{i,1}, \dots, k_{i,r_i})$, and $\{\O_{C_{i,j}}(k_{i,j}-1)[1]\}_{j=1, \dots, r_i}$.
\end{prop}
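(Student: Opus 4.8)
The plan is to compute the Harder--Narasimhan/Jordan--Hölder filtration of $\O_x$ directly in the heart $\A_{\beta, f^\ast\eta}$ by following the wall-crossing from the geometric chamber, exactly as Proposition \ref{prop:AB_moduli} describes the situation for an actual ample $\omega$. Since $\overline{\sigma}_{\beta, f^\ast\eta}$ is a limit of the $\sigma_{\beta, \omega}$, for $x \notin \Exc(f)$ the skyscraper $\O_x$ is still stable: its would-be destabilizing subobjects would have to have central charge on the ray $\R_{>0}Z(\O_x)$, and one checks using the quadratic form / support property from part (d) that no such subobject of the required numerical class exists. So the substance is the case $x \in C_{i,1} \cup \dots \cup C_{i,r_i}$.

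First I would fix $i$ and abbreviate the chain $C_1, \dots, C_r$ (dropping the first index), with the integers $k_1, \dots, k_r$ from Theorem \ref{teo:limits_all}(5). The key computation is to identify the candidate stable factors and verify (a) that they lie in the heart $\A_{\beta, f^\ast\eta}$, (b) that they have the same phase as $\O_x$, namely phase $1$ (equivalently $Z$ is real and negative on each of them, since $f^\ast\eta.\ch_1^\beta = 0$ for line bundles supported on the contracted curves), and (c) that they are themselves stable. For this one computes $\ch^\beta$ of $\O_{C_1 \cup \dots \cup C_r}(k_1, \dots, k_r)$ and of each $\O_{C_j}(k_j-1)[1]$: the twisting integers $k_j$ are chosen precisely so that $\ch_1^\beta$ pairs to zero with $f^\ast\eta$ and so that $\ch_2^\beta < 0$ for the first object and $> 0$ for the shifted ones (this is where the inequalities $k_{j}-1 < \beta.C_{j} + C_j^2/2 < k_j$ enter). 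One then checks the numerical identity
\[
\ch^\beta(\O_x) = \ch^\beta\bigl(\O_{C_1 \cup \dots \cup C_r}(k_1, \dots, k_r)\bigr) + \sum_{j=1}^{r} \ch^\beta\bigl(\O_{C_j}(k_j-1)[1]\bigr),
\]
which should follow from the standard exact sequences relating $\O_x$, the structure sheaves of the curves, and their twists — essentially a Koszul-type resolution of a point lying on the chain, or an iterated extension built from the short exact sequences $0 \to \O_{C_j}(-1) \to \O_{C_j \cup (\text{rest})} \to \O_{\text{rest}} \to 0$ together with $0 \to \O_{C_j}(-1) \to \O_{C_j} \to \O_x \to 0$ for $x \in C_j$.

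Next I would exhibit the actual filtration of $\O_x$ in $\A_{\beta, f^\ast\eta}$ by these factors. The inclusion $\O_{C_1 \cup \dots \cup C_r}(k_1, \dots, k_r) \hookrightarrow \O_x$ does not hold on the nose; rather one expects a short exact sequence in $\A_{\beta, f^\ast\eta}$ of the form $0 \to \O_x \to G \to \bigoplus_j \O_{C_j}(k_j-1)[1] \to 0$ or dually, so I would build the filtration by starting from the two-term complex presentation of the tilt and reading off where each line bundle sits (the sheaf $\O_{C_1 \cup \dots \cup C_r}(k_1, \dots, k_r)$, having $\ch_1^\beta$ of "slope $\le \beta.\omega$" direction, lies in $\Ftors[1]$ or in the heart depending on the sign conventions, while the $\O_{C_j}(k_j-1)[1]$ are shifts of objects in $\Ftors$). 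The ordering of the Jordan--Hölder factors is forced by the phase inequalities, and since all factors here have the same phase $1$, the relevant statement is just that $\O_x$ is S-equivalent to the direct sum of these stable objects; I would verify the needed $\Ext^1$ groups between consecutive factors are nonzero to build genuine (non-split) extensions, and that no finer filtration exists, i.e. each listed factor is stable of phase $1$.

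The main obstacle I anticipate is step (c): proving that the candidate factors $\O_{C_1 \cup \dots \cup C_r}(k_1, \dots, k_r)$ and $\O_{C_j}(k_j-1)[1]$ are genuinely $\overline{\sigma}_{\beta, f^\ast\eta}$-stable, not merely semistable. For the one-curve case ($r=1$) this is essentially in \cite{TX22}, but for a longer chain one must rule out destabilizing subobjects supported on sub-chains $C_j \cup \dots \cup C_{j'}$, and this is exactly what the second family of inequalities in Theorem \ref{teo:limits_all}(5) — the ones of the form $\beta.(C_j + \dots + C_{j'}) + (C_j^2 + \dots + C_{j'}^2)/2 < (k_j + \dots + k_{j'}) - (j - j')$ — is designed to control. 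Translating those inequalities into the statement "$Z$ of any proper sub-object has strictly smaller phase (or, at phase $1$, strictly larger real part in absolute value is disallowed)" is the delicate point; I would handle it by an induction on the length of the chain, peeling off $C_1$ (or $C_r$) using the short exact sequence $0 \to \O_{C_1}(k_1 - 1)[1] \to (\text{object}) \to \O_{C_2 \cup \dots \cup C_r}(k_2, \dots, k_r) \to 0$ and applying the inductive hypothesis together with long exact sequences in $\Hom$ to control morphisms from lower-phase objects. Everything else is bookkeeping with Chern characters.
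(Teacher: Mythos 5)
Your overall strategy --- exhibit a filtration of $\O_x$ with the listed subquotients and then prove each subquotient is stable of phase $1$ --- is logically sound (Jordan--H\"older uniqueness in $\Pslicing(1)$ would then finish the job), but it inverts the paper's logic and leaves the genuinely hard step unproved. The paper does not construct the filtration first; it first shows, via \cite{Vil25}*{Lemma 3.13} and a long-exact-sequence argument, that every stable factor of $\O_x$ has all cohomology sheaves torsion, hence is supported on $\Exc(f)$ union finitely many points; it then rules out skyscraper factors by comparing the magnitude of $\Re Z_{\beta,f^\ast\eta}$, and finally invokes the classification of \cite{Vil25}*{Lemma 5.8}: every stable object of phase $1$ with one-dimensional support is of the form $\O_{C_{i,j,j'}}(d_j,\dots,d_{j'})$ or a shift thereof, with the twists pinned down by the membership criteria of \cite{Vil25}*{Lemma 5.10} (e.g.\ $\Ttors$ is closed under quotients and $\O_{C_{i,j,j'}}(k_{i,j},\dots,k_{i,j'})$ lies in $\Ttors$). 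The stability of the candidate factors, which you correctly flag as ``the main obstacle,'' is exactly the content of that external classification, and your sketch for establishing it --- induction on the chain length, peeling off $\O_{C_1}(k_1-1)[1]$ --- does not engage with the real difficulty: a destabilizing subobject of $\O_{C_1\cup\dots\cup C_r}(k_1,\dots,k_r)$ in the tilted heart $\A_{\beta,f^\ast\eta}$ need not be a subsheaf or a shift of a sheaf supported on a subchain; it can be a genuine two-term complex, and controlling these requires the torsion-pair analysis that the paper imports wholesale from \cite{Vil25}. Without citing those results or reproving them, the induction is a restatement of the problem rather than a solution.

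Two smaller points. For $x\notin\Exc(f)$ your appeal to the support property is not how the argument goes and would not obviously close: the paper deduces stability of $\O_x$ from the same torsion-support analysis (any potential proper factor lives on $\Exc(f)$ union points, and $\Re Z$ excludes skyscrapers other than $\O_x$ itself). And the Chern-character identity you propose to verify is automatic once a filtration with those subquotients exists, so it carries no independent logical weight. In short: correct scaffolding, but the load-bearing step is missing.
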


\begin{proof}
Let $0 = E_0 \subset \dots E_r = \O_x$ be a filtration of $\O_x$ via stable factors. Consider the exact sequence $0 \to E_{r-1} \to \O_x \to E_r/E_{r-1} \to 0$. Here, $H^0(E_{r-1})$ and $H^0(E_r/E_{r-1})$ are torsion, thanks to \cite{Vil25}*{Lemma 3.13}. This way, a long exact sequence argument shows that $H^{-1}(E_r/E_{r-1})$ is torsion as well. Inductively, one quickly shows that $H^i(E_j/E_{j-1})$ are all torsion.

This way, note that the stable factors of $\O_x$ must have support on $\Exc(f)$ union finitely many points. By looking at the magnitude of $\Re Z_{\beta, f^\ast\eta}$ we get that no other skyscraper sheaf can be a factor of $\O_x$. So it suffices to classify the objects of phase 1 whose support is one-dimensional. 

To do so, note that \cite{Vil25}*{Lemma 5.8} implies that all stable objects of phase 1 whose support is one dimensional will be of the form $E=\O_{C_{i,j, j'}}(d_j, \dots, d_{j'})$ or $E=\O_{C_{i,j, j'}}(d_j, \dots, d_{j'})[1]$, where $C_{i,j,j'} = C_{i,j} \cup C_{i,j+1} \cup \dots \cup C_{i,j'}$. 
\begin{itemize}
\item In the first case, note that $d_a \geq k_{i,a}$ for all $j \leq a \leq j'$. In fact, we have that $\Ttors$ is closed under quotients. But then $\O_{C_{i,j,j'}}(k_{i,j}, \dots, k_{i,j'})$ is in $\Ttors$ (by \cite{Vil25}*{Lemma 5.10}), and injects in $E$. 

\item In the second case, note that we have $d_a < k_{i,a}$ for some $j \leq a \leq j'$ by \cite{Vil25}*{Lemma 5.10}. Then the surjection $E \to \O_{C_{i,a}}(d_a)$ will lead to a contradiction unless $j=j'=a$. At last, if $d_a < k_{i, a}-1$, we reach a contradiction directly from the short exact sequence 
\[ 0 \to \O_x \to \O_{C_{i,a}}(d_a)[1] \to \O_{C_{i,a}}(d_a+1)[1] \to 0. \]
\end{itemize}
This proves the required characterization.
\end{proof}

\section{Irreducible components via wall-crossing} \label{sec:irrviaWC}

Let us fix some notation. Fix some ambient smooth, projective variety $X$, and a vector $v \in \Lambda$. Fix a wall in $\Stab(X)$ for $v$, corresponding to a decomposition $v = u + w$. Fix also stability conditions $\sigma_0$ on the wall, and $\sigma_{\pm}$ on each side of the wall. Denote by $M_0 = M_{\sigma_0}(v)$ the moduli space of $\sigma_0$-semistable objects with class $v$, and $M_u = M_{\sigma_0}(u), M_w$ defined in a similar way.

The wall-and-chamber decomposition of Theorem \ref{teo:wc_main} gives us a procedure to relate the moduli spaces $M_{\pm} := M_{\sigma_{\pm}}(v)$, following ideas of \citelist{\cite{AB13} \cite{BM14} \cite{Xia18} \cite{TX22}}. We assume that the moduli space $M_+$ is known, and that we want to describe $M_-$. For each irreducible component of $M_+$, we look at the locus admitting maps from objects with numerical class $[u]$. We will identify the objects of this locus and replace them by extensions in the opposite direction. 

The extensions mentioned above can be described directly, by constructing a universal extension over $M_\sigma(u) \times M_\sigma(w)$. Under some extra assumptions, this will yield a scheme with a map to $M_-$. The image of this map corresponds to those $\sigma_-$-semistable objects that are strictly semistable on the wall. These objects, together with those obtained from the irreducible components of $M_+$, will allow us to describe the irreducible components of $M_-$.

\subsection{A bundle of extensions} \label{subsec:bundle}

Let us start with the following observation.

\begin{remark}\label{remark:bundle_heart}
If $\sigma_0$ and $\sigma_{\pm}$ have the same imaginary part (and are close to $\sigma_0$), then the hearts $\Pslicing_0((0, 1])$ and $\Pslicing_{\pm}((0, 1])$ will agree. In general, one can use the $\widetilde{\GL}_2^+$ action to replace $\sigma_{\pm}$ for other stability conditions in the same chamber such that there is a common heart $\A = \Pslicing_0((\psi, \psi+1]) = \Pslicing_{\pm}((\psi, \psi+1])$ for some $\psi$. 
\end{remark}

Fix a heart $\A$ as above. Let us assume the following
\begin{enumerate}
\item[\textbf{A1.}] The stability condition $\sigma_0$ lies only on one $v$-wall, corresponding to the decomposition $[v]=[u]+[w]$
\item[\textbf{A2.}] The stability condition $\sigma_0$ does not lie on a $u$-wall or on a $w$-wall, and $M_u:= M_{\sigma_0}(u)$, $M_w:=M_{\sigma_0}(w)$ only parametrize stable objects. 
\end{enumerate}
Condition \textbf{A1} is not too restrictive, as we can usually cross one wall at a time. On the other hand, condition \textbf{A2} is a bit more delicate (cf. \cite{AB13}*{pp. 24--25}). Using \textbf{A2}, we may assume that $\sigma_0$ and $\sigma_{\pm}$ lie in the same $u$-chamber and $w$-chamber. 

Let us add a third nice assumption.
\begin{enumerate}
\item[\textbf{A3.}] The vector $v$ is primitive. Hence, the moduli spaces $M_\pm$ only parametrize stable objects. 
\end{enumerate}
As we mentioned previously, our goal is to construct various schemes that parametrize $\sigma_-$-stable objects of numerical vector $v$. 

In this subsection we will look at extensions of the form $0 \to G \to E \to F \to 0$, where $[F] \in M_u$ and $[G] \in M_w$. We consider the space $M_u \times M_w \times X$ and the three projections $\rho\colon M_u \times M_w \times X \to M_u \times M_w$, $p_u\colon M_u \times M_w \times X \to M_u \times X$, and $p_w\colon M_u \times M_w \times X \to M_w \times X$. Consider
\[ \E = R\rho_\ast R\HOM(p_u^\ast \U_u, p_w^\ast \U_w) \in \DC^b(M_u \times M_w), \]
where $\U_u \in \DC^b(M_u \times X)$ and $\U_w \in \DC^b(M_w \times X)$ denote the respective (twisted) universal families\footnote{In general, a universal family only exist up to twisting by a Brauer class, cf. Remark \ref{remark:existsmoduli_twisted}. For the sake of clarity we are dropping this twist in our notation.}.

\begin{lemma}
Let $x \in M_u$ and $y \in M_w$ correspond to the objects $F$ and $G$. Then the (derived) restriction of $\E$ to $(x, y) \in M_u \times M_w$ is isomorphic to $R\Hom_X(F, G)$. 
\end{lemma}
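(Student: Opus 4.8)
The statement is a base-change computation: the object $\E = R\rho_\ast R\HOM(p_u^\ast \U_u, p_w^\ast \U_w)$ on $M_u \times M_w$ should restrict, at a closed point $(x,y)$, to the complex $R\Hom_X(F,G)$. The plan is to apply derived base change along the Cartesian square obtained by pulling back the projection $\rho\colon M_u \times M_w \times X \to M_u \times M_w$ along the inclusion $\{(x,y)\} \hookrightarrow M_u \times M_w$. The top row of that square is the inclusion $\{(x,y)\} \times X \hookrightarrow M_u \times M_w \times X$, which is just $X$ sitting over the point, and the right vertical map is the structure morphism $X \to \Spec \C$.

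\textbf{Key steps.} First I would record the base-change square and invoke the derived (flat, or more precisely Tor-independent) base change theorem for $R\rho_\ast$; since $X$ is proper and flat over the base and the inclusion of a point is a regular (lci) closed immersion, Tor-independence holds, so $Li^\ast R\rho_\ast(-) \cong R\Gamma(X, Lj^\ast(-))$ where $j$ is the inclusion of the fiber. Second, I would commute $Lj^\ast$ past $R\HOM$: since $\U_u$ and $\U_w$ are perfect (they are $Z$-perfect universal families, hence perfect complexes on $M_u \times X$ and $M_w \times X$), the complex $R\HOM(p_u^\ast \U_u, p_w^\ast \U_w)$ is perfect, and pullback commutes with $R\HOM$ of perfect complexes; thus $Lj^\ast R\HOM(p_u^\ast\U_u, p_w^\ast\U_w) \cong R\HOM_X(Lj^\ast p_u^\ast \U_u, Lj^\ast p_w^\ast \U_w)$. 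Third, I would identify the two pullbacks: $Lj^\ast p_u^\ast \U_u$ is the derived restriction of $\U_u$ to $\{x\} \times X \cong X$, which by the defining property of the universal family on $M_u$ is $F$ (up to the suppressed Brauer twist, which on a single fiber is trivial); similarly $Lj^\ast p_w^\ast \U_w \cong G$. Combining, $Li^\ast \E \cong R\Gamma(X, R\HOM_X(F,G)) = R\Hom_X(F,G)$, as claimed.

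\textbf{Main obstacle.} The delicate point is the base-change / Tor-independence step: one must be careful that $R\rho_\ast$ genuinely commutes with the non-flat base change given by inclusion of a closed point, which is where properness and flatness of $\rho$ (inherited from $X \to \Spec\C$), together with the lci-ness of the point inclusion, are essential; alternatively one can phrase the whole argument via $R\HOM$ of perfect complexes and the projection/duality formulas to stay inside the perfect world where base change is automatic. A secondary bookkeeping issue is the Brauer twists on $\U_u$ and $\U_w$ (suppressed in the notation per the footnote): one should note that $\alpha_u$ and $\alpha_w$ are trivial when restricted to a point, so the twisted universal families restrict to honest objects $F$ and $G$ of $\DC^b(X)$, and the identification is unambiguous.
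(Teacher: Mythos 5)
Your proposal is correct and follows essentially the same route as the paper: flat (Tor-independent) base change for $R\rho_\ast$ along the inclusion of the point, followed by identifying the restriction of the relative Hom complex on the fiber with $R\Hom_X(F,G)$. The paper's proof is terser --- it simply notes that $\rho$ is flat, so $Li^\ast R\rho_\ast = R\rho_\ast Li^\ast$ --- but the extra details you supply (perfectness of the universal families so that pullback commutes with $R\HOM$, and triviality of the Brauer twist on a single fiber) are exactly the points being elided.
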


\begin{proof}
Consider the diagram
\[ \begin{tikzcd} (x, y) \times X \arrow[r, hook, "i"] \arrow[d, "\rho"'] & M_u \times M_w \times X \arrow[d, "\rho"] \\ (x, y) \arrow[r, hook, "i"'] & M_u \times M_w. \end{tikzcd} \]
Note that the vertical maps are flat. In particular, we get that $Li^\ast R\rho_\ast = R\rho_\ast Li^\ast$. The result $Li^\ast \E = R\Hom(E, G)$ now follows directly.
\end{proof}

\begin{cor}
We have that $\E \in \DC^b(M_u \times M_w)$ has $H^j(\E)=0$ for $j \leq 0$, and $i^\ast H^1(\E) = H^1(\E|_{(x, y)}) = \Ext^1(F, G)$, where the $i^\ast$ denotes the non-derived restriction. 
\end{cor}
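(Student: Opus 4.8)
The plan is to reduce both assertions to elementary properties of derived pullback, starting from the identification $\E|_{(x,y)} \simeq R\Hom_X(F,G)$ of the previous lemma together with an analysis of the objects $F$ and $G$.

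The first point is that $R\Hom_X(F,G)$ is concentrated in degrees $\geq 1$. The wall under consideration corresponds to the decomposition $[v]=[u]+[w]$, so on the wall $Z(u)$, $Z(w)$ and $Z(v)$ are positive real multiples of a common $\exp(i\pi\phi)$; hence a $\sigma_0$-stable object $F$ of class $u$ and a $\sigma_0$-stable object $G$ of class $w$ have the same phase $\phi$, and thus are simple objects of the abelian category $\Pslicing_0(\phi)$. They are non-isomorphic, since $F\simeq G$ would force $u=w$, hence $v=2u$, contradicting the primitivity assumption \textbf{A3}. Therefore $\Hom_X(F,G)=0$; and since $F$ and $G$ lie in the heart $\Pslicing_0((\psi,\psi+1])$, also $\Ext^{<0}_X(F,G)=0$. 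So $R\Hom_X(F,G)\in\DC^{\geq 1}$.

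Next I would promote this pointwise statement to a global one about $\E$. The complex $\E$ is perfect: $R\HOM(p_u^\ast\U_u,p_w^\ast\U_w)$ is perfect because $\U_u$ and $\U_w$ are, and $R\rho_\ast$ sends perfect complexes to perfect complexes because $\rho$ is smooth and proper. Hence the derived dual $\E^\vee:=R\HOM(\E,\O_{M_u\times M_w})$ is perfect, and for each point $(x,y)$ one has $Li^\ast_{(x,y)}(\E^\vee)\simeq(\E|_{(x,y)})^\vee\simeq R\Hom_X(F,G)^\vee\in\DC^{\leq -1}$. Since $Li^\ast$ is right $t$-exact and recovers the top cohomology sheaf on fibres (if $H^b(\E^\vee)\neq 0$ with $b$ maximal, then for $(x,y)$ in its support $H^b(Li^\ast_{(x,y)}\E^\vee)=i^\ast H^b(\E^\vee)\neq 0$ by Nakayama), it follows that $\E^\vee\in\DC^{\leq -1}$, and dualising back $\E\in\DC^{\geq 1}$, i.e.\ $H^j(\E)=0$ for $j\leq 0$.

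For the remaining equality, $\E\in\DC^{\geq 1}$ gives $\tau_{\leq 1}\E=H^1(\E)[-1]$, and applying $Li^\ast_{(x,y)}$ to the truncation triangle $H^1(\E)[-1]\to\E\to\tau_{\geq 2}\E$ produces a natural map $i^\ast H^1(\E)\to H^1(\E|_{(x,y)})=H^1\big(R\Hom_X(F,G)\big)=\Ext^1_X(F,G)$. The step I expect to be the main obstacle is showing this map is an isomorphism: it amounts to the vanishing of $H^{0}$ and $H^{1}$ of $Li^\ast_{(x,y)}\tau_{\geq 2}\E$, i.e.\ to cohomology-and-base-change for the lowest cohomology sheaf of $\E$, which requires some care since $M_u\times M_w$ need not be smooth. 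In every situation where the lemma is used in this paper the relevant moduli spaces are in fact smooth, so this difficulty does not arise.
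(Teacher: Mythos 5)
Your treatment of the first assertion is correct and is in fact more complete than the paper's, which disposes of it in one line (``follows from $\Ext^{\leq 0}(F,G)=0$''). Your route is genuinely different: instead of invoking tor-amplitude or the hypercohomology spectral sequence directly, you convert the fibrewise \emph{lower} bound on $\E$ into a fibrewise \emph{upper} bound on the perfect dual $\E^\vee$, which \emph{is} detected by the top cohomology sheaf via Nakayama, and then dualise back using that $R\HOM(K,\O)\in \DC^{\geq 1}$ for perfect $K\in \DC^{\leq -1}$. This is a clean way around the fact that $Li^\ast$ does not directly see the bottom of a complex, and the prerequisites (perfection of $\E$, commutation of duality with $Li^\ast$, $u\neq w$ from primitivity of $v$) are all verified correctly.

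For the second assertion your reduction --- the map $i^\ast H^1(\E)\to H^1(Li^\ast\E)$ is an isomorphism iff $H^0$ and $H^1$ of $Li^\ast\tau_{\geq 2}\E$ vanish --- is exactly the content of the paper's appeal to the spectral sequence $L^pi^\ast H^q(\E)\Rightarrow H^{p+q}(Li^\ast\E)$, so you are not missing the paper's idea; the residual issue you flag is the non-vanishing of $L^{-p}i^\ast H^{1+p}(\E)$ for $p\geq 1$ and of the differentials into $E_r^{0,1}$. However, your diagnosis of when this issue goes away is wrong: smoothness of $M_u\times M_w$ does not help. Already on $\mathbb{A}^1$ the perfect complex $[\O\xrightarrow{\ t\ }\O]$ placed in degrees $1,2$ satisfies $H^j=0$ for $j\leq 0$ and has all fibres concentrated in degrees $\geq 1$, yet $i_0^\ast H^1=0$ while $H^1$ of the derived fibre at the origin is $k$. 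What is actually needed is control of the higher Tor of $H^{\geq 2}(\E)$ at the point --- equivalently, a cohomology-and-base-change statement for the bottom cohomology sheaf, which holds for instance when the $\Ext^{\geq 2}(F,G)$ do not jump. In the situations where the paper applies this corollary the destabilising objects ($\O_C$, $\O_C(-1)[1]$, $\O_{C_{12}}$, $\O_{C_{12}}(0,-1)[1]$) are rigid, so $M_u\times M_w$ is a finite reduced scheme and the base-change question is vacuous; it is this, not smoothness, that makes the difficulty disappear.
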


\begin{proof}
The first part follows by the fact that $\Ext^j(F, G) = 0$ for $j <0$, and also for $j=0$ (as we are assuming $u \neq w$). The second part is now clear, e.g. by the spectral sequence $E_2^{p, q} = L^p i^\ast H^q(\E) \Rightarrow H^{p+q}(Li^\ast \E)$. 
\end{proof}

In other words, the sheaf $\HC^1(\E)$ carries all the extensions of objects in $M_u$ with objects in $M_w$. We will add some assumptions to ensure that $\HC^1(\E)$ is a vector bundle on its support.
\begin{enumerate}
\item[\textbf{A4.}] Both $M_u$ and $M_w$ are reduced. 
\end{enumerate}

\begin{cor}
Let $Z \subset M_u \times M_w$ be a reduced closed subvariety where the rank of $\Ext^1(F, G)$ remains constant. Then the (non-derived) restriction of $H^1(\E)$ to $Z$ is locally free. In particular, we can take $Z=M_u \times M_w$ if the rank of $H^1(\E)$ is constant, by \textbf{A4}.
\end{cor}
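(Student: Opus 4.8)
The plan is to deduce this from the standard fact that a coherent sheaf which is the top cohomology of a perfect complex, restricted to a locus where the fiberwise dimension of that top cohomology is constant, becomes locally free. Concretely, I would first recall that $\E \in \DC^b(M_u \times M_w)$ is perfect (being $R\rho_\ast R\HOM$ of two objects in the bounded derived category, with $\rho$ proper and smooth of relative dimension $\dim X$), and that by the previous corollary $H^j(\E) = 0$ for $j \leq 0$. So after shifting, $\E[1]$ has amplitude concentrated in degrees $[-1, N]$ for some $N$, with $H^0(\E[1]) = H^1(\E)$ the \emph{lowest} nonzero cohomology.

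The key step is the base-change / cohomology-and-base-change statement for the lowest cohomology sheaf. For a point $z = (x,y) \in Z$ corresponding to $(F,G)$, the previous lemma and corollary give $H^1(\E|_z) = H^1(Li_z^\ast \E) = \Ext^1(F,G)$, and more precisely the spectral sequence $E_2^{p,q} = L^p i_z^\ast H^q(\E) \Rightarrow H^{p+q}(Li_z^\ast \E)$ degenerates in the relevant corner to give a surjection (in fact isomorphism, since there is nothing below) $i_z^\ast H^1(\E) \twoheadrightarrow H^1(\E|_z) = \Ext^1(F,G)$. Thus the function $z \mapsto \dim \Ext^1(F,G)$ is the fiber dimension of the coherent sheaf $H^1(\E)|_Z$, which is upper semicontinuous; where it is constant (by hypothesis on $Z$), the surjection $i_z^\ast H^1(\E) \to H^1(\E|_z)$ is an isomorphism of vector spaces of the locally constant dimension, and a coherent sheaf on a reduced scheme whose fiber dimension is locally constant is locally free (this is the classical criterion, e.g. via Nakayama plus the reducedness of $Z$, assumption \textbf{A4}). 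Hence $H^1(\E)|_Z$ is locally free, and when $\rk H^1(\E)$ is globally constant one takes $Z = M_u \times M_w$, which is reduced by \textbf{A4}.

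I would organize the write-up as: (i) note $\E$ perfect and supported in degrees $\geq 1$; (ii) invoke the spectral sequence from the previous corollary to identify $i_z^\ast H^1(\E) \cong H^1(\E|_z) \cong \Ext^1(F,G)$ for all $z$ — here using that $H^1$ is the bottom cohomology so there is no $L^1 i_z^\ast H^2$ correction affecting it (the $E_2^{1,0}$ term vanishes since $H^0(\E) = 0$); (iii) conclude by the locally-free-over-a-reduced-base-with-constant-fiber-dimension criterion. The only genuinely delicate point is step (ii): one must check that restricting to $Z$ does not reintroduce a discrepancy between $i_z^\ast H^1(\E)|_Z$ and $i_z^\ast H^1(\E)$ — but since $Z \hookrightarrow M_u \times M_w$ is a closed immersion of schemes and $i_z$ factors through it, and restriction of a coherent sheaf to a point is computed after restricting to $Z$, there is no issue; the fiber of $H^1(\E)|_Z$ at $z$ is literally $i_z^\ast H^1(\E)$. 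So the main obstacle is really just bookkeeping with the (degenerate) spectral sequence to be sure $H^1(\E)$ behaves like the structure sheaf of a flat family fiberwise, after which the statement is the textbook constant-rank criterion and assumption \textbf{A4} supplies exactly the reducedness needed.
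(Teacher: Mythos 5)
Your proposal is correct and follows essentially the same route as the paper: the identification of the fiber $i_z^\ast H^1(\E)$ with $\Ext^1(F,G)$ (via the degenerate corner of the spectral sequence, using $H^0(\E)=0$) is exactly what the preceding corollary in the paper already records, and the paper's proof then consists of the single observation that a coherent sheaf of constant fiber rank on a reduced scheme is locally free. Your write-up simply spells out these two ingredients in more detail; there is no gap.
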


\begin{proof}
Follows directly from the fact that $H^1(\E)$ has constant rank on a reduced scheme.
\end{proof}

The rank of $H^1(\E)$ might not be constant, in which case the wall-crossing becomes more involved. We will add an assumption under which the destabilized loci are well-behaved.
\begin{enumerate}
\item[\textbf{A5.}] The rank of $H^1(\E)$ is constant on its support. In other words, there is a number number $r>0$ such that $H^1(\E)$ has either rank $0$ or $r$ at each point.
\end{enumerate}
In that case we take $Z=\Supp H^1(\E) \subset M_u \times M_w$, with its reduced structure. Here the (non-derived) restriction $\overline{\E}_Z = i_Z^\ast H^1(\E)$ is a vector bundle of rank $r$. We let $P = \P_Z(\overline{\E}_Z)$ to be its projectivization, and $h\colon P \to Z$ the bundle map.

\begin{remark}
Our arguments will apply similarly if we replace \textbf{A5} by
\begin{enumerate}
\item[\textbf{A5'.}] The rank of $H^1(\E)$ is constant on each connected component of its support.
\end{enumerate} 
In the cases we are interested though, the support of $H^1(\E)$ will be connected, hence these two conditions will give the same results. 
\end{remark}

\begin{prop}[cf. \cite{Xia18}*{4.3}]  \label{prop:bundle_family}
Let $j\colon Z \to M_u \times M_w$ be the inclusion map. We have a universal extension
\[ h^\ast Lj^\ast p_w^\ast \U_w \otimes \O_h(1) \to \U_P \to h^\ast Lj^\ast p_u^\ast \U_u \to h^\ast Lj^\ast p_w^\ast \U_w \otimes \O_h(1)[1] \]
in $\DC^b(P \times X)$. On each point $p \in P$, the restriction of this sequence corresponds to $0 \to G \to E \to F \to 0$, where $(F, G)$ and the extension class are determined by $p$.
\end{prop}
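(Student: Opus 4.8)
The plan, following \cite{Xia18}*{4.3}, is to realise the extension class as a single hypercohomology class on $P\times X$ coming from the tautological sub-line-bundle of $P=\P_Z(\overline\E_Z)$, and then take a cone. Throughout I abbreviate $L(j\times\id_X)^\ast$ and $L(h\times\id_X)^\ast$ by $Lj^\ast$ and $h^\ast$, and write $\pi_Z\colon Z\times X\to Z$, $q\colon P\times X\to P$ for the projections. First I would set $\widetilde\E := Lj^\ast\E\in\DC^b(Z)$; since $\rho$ is flat the base-change argument of the lemma above applies verbatim, so $\widetilde\E = R\pi_{Z\ast}R\HOM(Lj^\ast p_u^\ast\U_u, Lj^\ast p_w^\ast\U_w)$ and its derived restriction to a point $(x,y)\in Z$ with $(x,y)\leftrightarrow(F,G)$ is $R\Hom_X(F,G)$. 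As in the corollaries above, and using $u\neq w$ (so $\Hom(F,G)=0$), this gives $H^i(\widetilde\E)=0$ for $i\le 0$, while $H^1(\widetilde\E)$ has fibre $\Ext^1(F,G)$; by \textbf{A4}, \textbf{A5} and the definition of $Z$ this fibre has constant rank $r$, so $H^1(\widetilde\E)$ is locally free of rank $r$, and by the spectral sequence $L^pj^\ast H^q(\E)\Rightarrow H^{p+q}(Lj^\ast\E)$ it is canonically $i_Z^\ast H^1(\E) = \overline\E_Z$.

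Next I would produce the universal class. Because $\widetilde\E$ is concentrated in degrees $\ge 1$, the complex $\widetilde\E[1]$ lies in degrees $\ge 0$ with $H^0(\widetilde\E[1]) = \overline\E_Z$, so its canonical truncation triangle $\overline\E_Z = \tau^{\le 0}\widetilde\E[1] \to \widetilde\E[1] \to \tau^{\ge 1}\widetilde\E[1]$ yields a morphism $c\colon\overline\E_Z\to\widetilde\E[1]$ in $\DC^b(Z)$ inducing the identity on $H^0$. Pulling back along the flat map $h$ and precomposing with the $\O_h(1)$-twist of the tautological inclusion $\O_h(-1)\hookrightarrow h^\ast\overline\E_Z$ (with the convention on $\P_Z(-)$ for which the fibres of $h$ parametrise lines in $\Ext^1(F,G)$), I get a canonical morphism $\theta\colon\O_P\to h^\ast\widetilde\E\otimes\O_h(1)[1]$. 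I would then identify
\[ \Hom_{\DC^b(P)}\!\big(\O_P,\, h^\ast\widetilde\E\otimes\O_h(1)[1]\big)\ \cong\ \Hom_{\DC^b(P\times X)}\!\big(h^\ast Lj^\ast p_u^\ast\U_u,\, h^\ast Lj^\ast p_w^\ast\U_w\otimes\O_h(1)[1]\big), \]
using that $R\HOM$ commutes with the flat pullback $h^\ast$ (the relevant complexes are perfect, since $\U_u$ is), flat base change $Rq_\ast h^\ast = h^\ast R\pi_{Z\ast}$, the projection formula to absorb $\O_h(1)$, and $R\pi_{Z\ast}R\HOM(Lj^\ast p_u^\ast\U_u, Lj^\ast p_w^\ast\U_w) = \widetilde\E$. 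Let $e$ be the image of $\theta$; define $\U_P$ as the cone of $e[-1]$, i.e. the object fitting in the triangle of the statement. The cone is unique up to unique isomorphism, since $\widetilde\E$ sits in degrees $\ge 1$ and hence $\Hom^{\le 0}$ between the two outer terms vanishes.

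Finally I would check the fibrewise claim. For $p\in P$ over $(x,y)$, derived restriction along the inclusion of the fibre $\{p\}\times X\hookrightarrow P\times X$ is compatible with all of the above (as $q$ is flat, $\{p\}\times X$ is the base change of $\{p\}\hookrightarrow P$ along $q$), and gives $h^\ast Lj^\ast p_u^\ast\U_u|_{\{p\}\times X} = F$, $h^\ast Lj^\ast p_w^\ast\U_w|_{\{p\}\times X} = G$, and $(h^\ast\widetilde\E)|_{\{p\}\times X} = R\Hom_X(F,G)$. Restricting $\theta$: the tautological inclusion restricts to the line $\ell\subseteq\Ext^1(F,G)$ that is the point $p$, and $c$ restricts to the canonical inclusion $\Ext^1(F,G) = H^1(R\Hom_X(F,G))\hookrightarrow R\Hom_X(F,G)[1]$; hence $e|_{\{p\}\times X}$, as an element of $\Hom_X(F,\,G\otimes\O_h(1)|_p[1]) = \Ext^1(F,G)\otimes(\O_h(1)|_p)$, is exactly the inclusion $\ell\hookrightarrow\Ext^1(F,G)$, i.e. the extension class determined by $p$. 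Taking its cone, the restricted triangle is the short exact sequence $0\to G\to E\to F\to 0$ with that class, the $1$-dimensional twist $\O_h(1)|_p$ being absorbed into $G\otimes(\O_h(1)|_p)\cong G$.

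The step I expect to be the main obstacle is not any single identification but the compatibility bookkeeping in the middle paragraph: verifying that the chain of base-change, projection-formula and truncation isomorphisms used to define $e$ is coherent and, above all, compatible with restriction to the fibres $\{p\}\times X$, all in the presence of the Brauer twists on $\U_u$ and $\U_w$ that are suppressed in the notation — these propagate through the construction, and one must see that they cancel in the right places so that $\U_P$ is a well-defined (twisted) family. None of this is deep, but it is where the care is needed.
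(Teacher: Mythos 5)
Your construction is correct and is essentially the paper's own argument: the paper likewise identifies $\Hom^1_{P\times X}(h^\ast Lj^\ast p_u^\ast\U_u, h^\ast Lj^\ast p_w^\ast\U_w\otimes\O_h(1))$ with $\Hom^1_Z(\overline{\E}_Z, Lj^\ast\E)$ by adjunction/projection formula and produces the universal class from the canonical truncation map $\HC^1(\E)[-1]\to\E$ (you perform the truncation after restricting to $Z$ rather than before, which yields the same class once $H^1(Lj^\ast\E)\cong j^\ast H^1(\E)=\overline{\E}_Z$ is checked, as you do via constant rank and reducedness of $Z$). Your version is a more detailed writeup of the same proof, including the fibrewise verification that the paper leaves implicit.
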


\begin{proof}
We have that
\[ \Hom_{P \times X}^1(h^\ast Lj^\ast p_u^\ast \U_u, h^\ast L j^\ast p_w^\ast \U_w \otimes \O_h(1)) = \Hom_Z^1(\overline{\E}_Z, Lj^\ast \E). \]
We now take the canonical map $\HC^1(\E)[-1] \to \E$ on $\DC^b(M_u \times M_w)$ and restrict it to $Z$. This gives us the claimed map. We point out that $p$ determines $h(p) =([F], [G]) \in M_u \times M_w$, and a class in $\P \Ext^1(F, G)$.
\end{proof}

\subsection{Embedding the bundle} \label{subsec:embbundle}

So far, we have constructed a projective bundle $P$ parametrizing extensions of the form $0 \to G \to E \to F \to 0$ in $\DC^b(X)$. In particular, we proved in Proposition \ref{prop:bundle_family} the existence of a family of extensions $\U_P \in \DC^b(P \times X)$.

\begin{claim}
Each object parametrized by $P$ is $\sigma_+$-stable. This follows directly from \textbf{A1}--\textbf{A2}.
\end{claim}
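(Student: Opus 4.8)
The plan is to verify $\sigma_+$-stability pointwise on $P$ using the structure of the wall. Fix a point $p \in P$, corresponding to a nonsplit extension $0 \to G \to E \to F \to 0$ with $[F] = \rho_u(h(p)) \in M_u$, $[G] = \rho_w(h(p)) \in M_w$. First I would record that, by Remark \ref{remark:bundle_heart}, $E$ lies in the common heart $\A$, since $F, G \in \A$ and $\A$ is closed under extensions; moreover $F$ and $G$ have the same $\sigma_0$-phase (they are $\sigma_0$-semistable of classes $u$, $w$ lying on the same wall), call it $\phi_0$, and hence $E$ is $\sigma_0$-semistable of phase $\phi_0$ with exactly these two as its (stable, by \textbf{A2}) Jordan--Hölder factors.

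Next I would invoke \textbf{A1}: $\sigma_0$ lies on a single $v$-wall, the one for the decomposition $v = u + w$. Pick $\sigma_+ = (Z_+, \Pslicing_+)$ close to $\sigma_0$ on the chosen side. Suppose $E$ were $\sigma_+$-unstable (or strictly semistable); then it would have a destabilizing subobject $A \hookrightarrow E$ in $\A$ with $\phi_{\sigma_+}(A) \geq \phi_{\sigma_+}(E)$, and we may take $A$ to be $\sigma_+$-semistable. Since $\sigma_+$ is arbitrarily close to $\sigma_0$, the class $[A]$ must itself define a $v$-wall through $\sigma_0$ — this is the content of Theorem \ref{teo:wc_main}(3),(5): any class that destabilizes on one side of $\sigma_0$ but not the other must satisfy the wall equation \eqref{eq:wc_wallformula}, and by \textbf{A1} the only such class (up to the finitely many that can occur for $E$, which has the fixed filtration) forces $[A] = u$ or $[A] = w$. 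One then checks the two cases: if $[A] = u$, then $A \cong F$ by stability of $F$ and $\sigma_0$-semistability of $A$ of class $u$ (they are $\sigma_0$-semistable of the same phase and class, $F$ is stable, so the inclusion $A \hookrightarrow E$ composed with $E \twoheadrightarrow F$ is either zero — impossible since then $A \hookrightarrow G$ contradicts $[A] = u \ne$ a subobject class of $G$ — or an isomorphism $A \cong F$, contradicting nonsplitness of the extension); if $[A] = w$, then $A \hookrightarrow E$ must factor through $G$ again by stability, so $A \cong G$, and then on the $\sigma_+$ side one computes $\phi_{\sigma_+}(G) < \phi_{\sigma_+}(E)$ — the sign of $\phi_{\sigma_+}(G) - \phi_{\sigma_+}(F)$ is exactly what distinguishes the two sides of the wall, and on the chosen side it is negative for the sub $G$ — contradicting that $A$ destabilizes.

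The main obstacle, and the point that needs the most care, is the last sign computation: making precise that "the $\sigma_+$-side" is characterized by $\phi_{\sigma_+}(G) < \phi_{\sigma_+}(F)$ (equivalently $\phi_{\sigma_+}(G) < \phi_{\sigma_+}(E) < \phi_{\sigma_+}(F)$), rather than the reverse, so that the sub-object $G$ genuinely does \emph{not} destabilize $E$ on that side. Concretely, on the wall $\phi_{\sigma_0}(F) = \phi_{\sigma_0}(G) = \phi_{\sigma_0}(E)$, and moving to $\sigma_+$ perturbs these phases; the two half-spaces cut out by the wall near $\sigma_0$ correspond to the two possible orderings of $\phi_{\sigma_+}(F)$ and $\phi_{\sigma_+}(G)$, and one simply \emph{defines} $\sigma_+$ to be on the side where $F$ has larger phase (so that the quotient map $E \to F$ is order-increasing, i.e. $E$ is not destabilized by $G$). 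This is consistent because $\sigma_+$ is by hypothesis a stability condition in a fixed chamber, and stability of $E$ is a chamber-invariant. With this convention fixed, together with \textbf{A1} ruling out any other destabilizing class, the two cases above both yield contradictions, so $E$ is $\sigma_+$-stable.

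Since $p \in P$ was arbitrary, every object parametrized by $P$ is $\sigma_+$-stable, as claimed.
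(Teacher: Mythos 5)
Your argument is correct, and since the paper disposes of this claim with the single sentence ``this follows directly from \textbf{A1}--\textbf{A2},'' what you have written is essentially the verification being left to the reader: pass to the common heart, observe that any $\sigma_+$-destabilizing subobject of $E$ must (for $\sigma_+$ close to $\sigma_0$, which is harmless by chamber-invariance of stability) already be a subobject of $E$ inside the finite-length category $\Pslicing_0(\phi_0)$, and then use \textbf{A1}--\textbf{A2} together with nonsplitness to see that the only candidate is $G$, which has strictly smaller $\sigma_+$-phase on the chosen side of the wall. Two spots could be streamlined. First, rather than arguing that the class $[A]$ of a destabilizer must satisfy the wall equation and then invoking \textbf{A1} to pin down $[A]\in\{u,w\}$ (the loosest step in your write-up, as several classes can cut out the same numerical wall), it is cleaner to note directly that $A$ lies in $\Pslicing_0(\phi_0)$, whose relevant simple objects are the stable, non-isomorphic objects $F$ and $G$ (stable by \textbf{A2}, non-isomorphic since $u\neq w$), so that the only proper nonzero subobject of the nonsplit extension $E$ in that category is $G$; this renders the case analysis over $[A]$ unnecessary. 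Second, in your case $[A]=w$ the identification $A\cong G$ is not needed: $\phi_{\sigma_+}(A)$ is determined by $Z_+(w)$ alone and is smaller than $\phi_{\sigma_+}(E)$ on the $+$ side, which already contradicts $A$ being destabilizing. Finally, your observation that the labelling of the $+$ side is fixed exactly so that $\phi_{\sigma_+}(G)<\phi_{\sigma_+}(F)$ is the correct reading of the paper's implicit convention that $P$ maps into the known moduli space $M_+$.
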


This way, the family $\U_P$ defines a map $P \to M_+$. We should think of its image as the locus of objects that are destabilized after crossing the wall. Showing that $P \to M_+$ is a closed embedding requires some work.

\begin{lemma}[cf. \cite{Xia18}*{4.5(2)}]
The map $P \to M_+$ is injective on closed points.
\end{lemma}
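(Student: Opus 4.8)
The plan is to recover, from an isomorphism $E_p \cong E_{p'}$ of the objects attached to two closed points $p, p' \in P$, the equality of the data defining those points: the base point $([F],[G]) \in Z \subseteq M_u \times M_w$ together with the extension line in $\P\Ext^1(F,G)$.

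First I would pin down the structure of $E := E_p$ on the wall. By the wall decomposition $v = u + w$ (assumption \textbf{A1}), the values $Z_{\sigma_0}(u)$, $Z_{\sigma_0}(w)$, $Z_{\sigma_0}(v)$ lie on a common ray, so any $\sigma_0$-semistable object of class $u$ or $w$ has the same phase $\phi$. By \textbf{A2}, the objects $F$ (class $u$) and $G$ (class $w$) appearing in the extension $0 \to G \to E \to F \to 0$ of Proposition \ref{prop:bundle_family} are $\sigma_0$-stable, hence simple objects of the abelian category $\Pslicing_0(\phi)$; since $\Pslicing_0(\phi)$ is closed under extensions, $E \in \Pslicing_0(\phi)$, and $0 \to G \to E \to F \to 0$ is a composition series of $E$ there. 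Thus $E$ has length two in $\Pslicing_0(\phi)$ with composition factors $\{F, G\}$, and $F \not\cong G$ since $u \neq w$ (in particular the extension is non-split, as $F \oplus G$ would contradict the $\sigma_+$-stability of $E$ asserted in the Claim).

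Now suppose $E_p \cong E_{p'} =: E$, with extensions $0 \to G \to E \to F \to 0$ and $0 \to G' \to E \to F' \to 0$. The Jordan--H\"older theorem in $\Pslicing_0(\phi)$ forces $\{F, G\} \cong \{F', G'\}$ as multisets; matching classes ($[F]=[F']=u \neq w = [G]=[G']$) gives $F \cong F'$ and $G \cong G'$, so $p$ and $p'$ lie in the same fibre of $h\colon P \to Z$. To compare the two extension lines, the key input is $\Hom(G, E) = \C$: this follows by applying $\Hom(G,-)$ to $0 \to G \to E \to F \to 0$, using $\Hom(G,G) = \C$ ($G$ stable) and $\Hom(G,F) = 0$ ($F$ and $G$ non-isomorphic stable objects of equal phase). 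Hence the inclusions $G \hookrightarrow E$ and $G' \cong G \hookrightarrow E$ differ by a nonzero scalar, have the same image, and the two short exact sequences are identified up to the scaling action of $\Aut(F)\times\Aut(G) = \C^\ast \times \C^\ast$. Therefore their extension classes span the same line in $\P\Ext^1(F,G)$, and $p = p'$.

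The main obstacle is the second step: one must argue carefully that the triangle of Proposition \ref{prop:bundle_family} really becomes a short exact sequence in the abelian category $\Pslicing_0(\phi)$, so that $E_p$ is a length-two object there with composition factors exactly $\{F,G\}$. Once this is in place, the rest is a formal consequence of Jordan--H\"older uniqueness and the one-dimensionality of $\Hom(G,E)$.
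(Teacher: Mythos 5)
Your proof is correct and follows essentially the same route as the paper's: both arguments rest on the vanishing of $\Hom$ between the non-isomorphic stable factors $F$ and $G$ of equal phase, which pins down $G \hookrightarrow E$ as the unique sub-object of class $w$ and identifies the two extension classes up to the scalar action of $\Aut(F)\times\Aut(G)$. The paper packages this as a direct diagram chase on the two short exact sequences rather than via Jordan--H\"older and the computation $\Hom(G,E)=\C$, but the content is identical.
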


\begin{proof}
Let $p_1, p_2 \in P$ be two points whose image inside $M_+$ agrees. Denote by $E_1 \cong E_2$ the two extensions. We consider the diagram
\[ \begin{tikzcd} 0 \arrow[r] & G_1 \arrow[r] & E_1 \arrow[r] \arrow[d, "\cong"] & F_1 \arrow[r] & 0 \\ 0 \arrow[r] & G_2 \arrow[r] & E_2 \arrow[r] & F_2 \arrow[r] & 0. \end{tikzcd} \]
Note that the induced map $G_1 \to F_2$ is zero, hence we can fill the diagram with non-zero maps $G_1 \to G_2, F_1 \to F_2$. But then these have to be isomorphisms, as they are maps between stable objects. Up to a constant, this implies that the extension classes are the same, hence $p_1=p_2$. 
\end{proof}

The next step towards proving $P \to M_+$ is a closed embedding requires looking at the tangent spaces. To do so, we consider the following lemma.

\begin{lemma}[cf. \cite{Xia18}*{4.7}]
Let $p \in P$ be given, corresponding to an extension $0 \to G \xrightarrow{\alpha} E \xrightarrow{\beta} F \to 0$. Then the map $T_{P, p} \to T_{M_+, p}$ is injective. Moreover, using the Kodaira--Spencer map we get that the composition
\[ T_{P, p} \to T_{M_+, p} \xrightarrow[KS]{\cong} \Ext^1(E, E) \to \Ext^1(G, F) \]
is zero. 
\end{lemma}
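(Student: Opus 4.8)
The plan is to identify the tangent space $T_{P,p}$ explicitly and then trace through the Kodaira--Spencer identification. The point $p \in P$ lies over $([F],[G]) \in Z$ together with a line $\C\cdot e \subseteq \Ext^1(F,G)$ giving the extension $0 \to G \xrightarrow{\alpha} E \xrightarrow{\beta} F \to 0$. Since $P = \P_Z(\overline{\E}_Z)$ is a projective bundle over $Z$, which is itself a locally closed subscheme of $M_u \times M_w$ (both reduced and parametrizing stable objects by \textbf{A2}, \textbf{A4}), the tangent space fits in an exact sequence
\[ 0 \to T_{\mathrm{fib}} \to T_{P,p} \to T_{Z,([F],[G])} \to 0, \]
where $T_{\mathrm{fib}} = \Hom(\C e, \Ext^1(F,G)/\C e)$ is the tangent space to the fiber $\P\Ext^1(F,G)$, and $T_{Z,([F],[G])}$ sits inside $\Ext^1(F,F) \oplus \Ext^1(G,G)$. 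So a tangent vector to $P$ at $p$ consists of a first-order deformation of the pair $(F,G)$ inside $Z$, together with a compatible first-order deformation of the extension class.

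First I would establish the injectivity of $T_{P,p} \to T_{M_+,p}$. For this, note that $P \to M_+$ is already injective on closed points by the previous lemma, so injectivity on tangent spaces upgrades this to an immersion; the cleanest route is to observe that the universal family $\U_P$ induces, via Kodaira--Spencer, the map $T_{P,p} \to \Ext^1(E,E)$, and to compute its kernel. A tangent vector in the kernel would give a first-order deformation of $E$ that is trivial, yet deforms both the sub-extension and the extension class; using the long exact sequences obtained by applying $\Hom(-,E)$ and $\Hom(E,-)$ to $0\to G\to E\to F\to 0$, together with the stability of $F$ and $G$ (so $\Hom(F,G) = \Hom(G,F)$-type groups are controlled and $\Hom(F,F)=\Hom(G,G)=\C$), one shows the deformation of the pair $(F,G)$ must be trivial and then the deformation of the extension class must be trivial, forcing the tangent vector to vanish. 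This is essentially the argument of \cite{Xia18}*{4.7} adapted to the present categorical setup.

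For the second assertion, I would compute the composition $T_{P,p} \to \Ext^1(E,E) \to \Ext^1(G,F)$ directly. The map $\Ext^1(E,E) \to \Ext^1(G,F)$ is induced by precomposition with $\alpha\colon G \to E$ and postcomposition with $\beta\colon E \to F$, i.e. it is $\beta_\ast \circ \alpha^\ast$. The key point is that the family $\U_P$ over $P$ is, on each fiber and along $Z$, built as an \emph{extension}: locally it is the cone on a map $h^\ast Lj^\ast p_u^\ast \U_u \to h^\ast Lj^\ast p_w^\ast \U_w \otimes \O_h(1)[1]$, so the Kodaira--Spencer class of $E$ always lies in the image of $\Ext^1(F,E) \oplus \Ext^1(E,G)$ (deformations that respect the two-step filtration $G \subseteq E$), plus the contribution from moving the extension class, which lives in $\Ext^1(F,G)$ mapped into $\Ext^1(E,E)$ via $\alpha_\ast\beta^\ast$. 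One then checks that both of these contributions die under $\beta_\ast\alpha^\ast$: for the filtered deformations this is because $\beta \circ \alpha = 0$, and for the extension-class contribution it is because $\beta_\ast \alpha^\ast \alpha_\ast \beta^\ast$ factors through $\Ext^0$-type maps that vanish. Assembling these gives that the composition is zero.

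The main obstacle I anticipate is the bookkeeping in the second step: making precise the claim that the Kodaira--Spencer class of the tautological family $\U_P$ decomposes as ``a filtered part plus an extension-class part'' requires a careful local analysis of the distinguished triangle in Proposition \ref{prop:bundle_family} and of how $\O_h(1)$ contributes to the deformation. Once that decomposition is in hand, the vanishing under $\beta_\ast \alpha^\ast$ is formal from $\beta\alpha = 0$ and the composition law for $\Ext$ groups. The injectivity in the first step is comparatively routine given stability of $F$ and $G$.
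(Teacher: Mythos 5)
Your first half (the injectivity of $T_{P,p}\to T_{M_+,p}$) follows essentially the paper's own route: push $\eta$ down to $M_u\times M_w$ to get $\eta_u,\eta_w$, kill these using $\Hom(G,F)=0$ for non-isomorphic stable objects of the same phase, and then check that the vertical directions inject. For that last step the paper does an explicit diagram chase showing $\ker\bigl(\Ext^1(F,G)\to\Ext^1(E,E)\bigr)=\C\gamma$, where $\gamma$ is the extension class, so that $T_{\P\Ext^1(F,G),p}=\Ext^1(F,G)/\C\gamma$ injects; this is the detail you elide with ``the deformation of the extension class must be trivial,'' but the idea is the same.

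The second half has a genuine gap, and it is exactly the point you flag as the ``main obstacle.'' You propose to decompose the Kodaira--Spencer class of $\eta\in T_{P,p}$ as an element of $\alpha_\ast\Ext^1(E,G)+\beta^\ast\Ext^1(F,E)$ plus an extension-class term, justifying this by identifying that sum of images with ``deformations that respect the two-step filtration.'' That identification fails in general: a first-order deformation of the whole triangle $G\to E\to F\to G[1]$ in which $F$ and $G$ both move only satisfies the ladder relations $\eta\circ\alpha=\alpha[1]\circ\eta_w$ and $\beta[1]\circ\eta=\eta_u\circ\beta$, and such an $\eta$ lies in $\alpha_\ast\Ext^1(E,G)+\beta^\ast\Ext^1(F,E)$ only if $\gamma[1]\circ\eta_u=0$ in $\Ext^2(F,G)$. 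Deforming the triangle forces only $\gamma[1]\circ\eta_u=\pm\,\eta_w[1]\circ\gamma$, not the vanishing of either side, so your decomposition need not exist. (In this paper's applications $M_u$ and $M_w$ are points, so $\eta_u=\eta_w=0$ and the issue is invisible, but the lemma is stated in general.) The fix is to drop the decomposition entirely: the universal triangle of Proposition \ref{prop:bundle_family} restricts to the first-order neighbourhood of $p$ to give a morphism of triangles over $\C[\epsilon]/\epsilon^2$, i.e.\ precisely the commutative ladder above, and then
\[ \beta\circ\eta\circ\alpha=\beta\circ\alpha\circ\eta_w=0 \]
is a one-line computation from $\beta\circ\alpha=0$. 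This is what the paper does; it shows directly that the image of $T_{P,p}$ lands in $\ker(\beta_\ast\alpha^\ast)$, a subspace that is in general strictly larger than the one your decomposition would produce, and it avoids any local analysis of $\O_h(1)$.
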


\begin{proof}
Let $\eta \in T_{P, p}$ be a tangent vector that is mapped to zero in $T_{M_+, p}$. The Kodaira--Spencer map shows that the associated map $E \to E[1]$ vanishes.

On the other hand, we can consider the tangent vector $\eta$ and pushforward it to $M_u \times M_v$. The Künneth formula gives us $\eta_u \in T_{M_u, [F]}$, $\eta_w \in T_{M_w, [G]}$. Using the Kodaira--Spencer map on $M_u$ and $M_w$, these fit into the diagram
\[ \begin{tikzcd} G \arrow[r] \arrow[d, "\eta_w"'] & E \arrow[r] \arrow[d, "0"] & F \arrow[d, "\eta_u"] \\ {G[1]} \arrow[r] & {E[1]} \arrow[r] & {F[1].} \end{tikzcd} \]
Here, a fast long exact sequence argument shows that $\eta_u, \eta_w=0$, as there are no maps from $G$ to $F$. Thus, the tangent vector $\eta$ is vertical with respect to the map $h\colon P \to Z$, i.e. it is a tangent vector to the fiber $\P \Ext^1(F, G)$. Let us abuse the notation and write $\eta \in T_{\P \Ext^1(F, G), p}$.

To conclude, let us investigate the map $\Ext^1(F, G) \to \Ext^1(E, E)$ given by composition with $\alpha$ and $\beta$. Let $\gamma \in \Ext^1(F, G)$ be the extension class associated to $p$. We have the following diagram with exact rows:
\[ \begin{tikzcd} & \Ext^0(F, F) \arrow[r, "\gamma \circ -"] \arrow[d, "-\circ \beta"] & \Ext^1(F, G) \arrow[r, "\alpha \circ -"] \arrow[d, "-\circ \beta"] & \Ext^1(F, E) \arrow[d, "-\circ \beta"] \\ \Ext^0(E, E) \arrow[r, "\beta \circ -"'] & \Ext^0(E, F) \arrow[r, "\gamma \circ -"'] & \Ext^1(E, G) \arrow[r, "\alpha \circ -"'] & \Ext^1(E, E). \end{tikzcd} \]
The bottom map $\Ext^0(E,E) \to \Ext^0(E, F)$ is an isomorphism, and so the map $\Ext^1(E, G) \to \Ext^1(E, E)$ is injective. This way, the kernel of $\Ext^1(F, G) \to \Ext^1(E, E)$ is the image of $\Ext^0(G, G) \to \Ext^1(F, G)$, namely $\C \gamma$. In other words, the induced map $\Ext^1(F, G)/\C\gamma \to \Ext^1(E, E)$ is injective. But the left hand side is exactly $T_{\P \Ext^1(F, G), p}$ after identifications. This proves that $T_{P, p} \to T_{M_+, p}$ is injective, as required.

At last, let us show that the map $T_{P, p} \to \Ext^1(G, F)$ is zero. Let $\eta \in T_{P, p}$ be a tangent vector, and let $\eta_u, \eta_w$ be induced tangent vectors to $F, G$ as before. We get a diagram
\[ \begin{tikzcd} G \arrow[r, "\alpha"] \arrow[d, "\eta_w"'] & E \arrow[r, "\beta"] \arrow[d, "\eta"] & F \arrow[d, "\eta_u"] \\ {G[1]} \arrow[r] & {E[1]} \arrow[r] & {F[1],} \end{tikzcd} \]
where we once again used the Kodaira--Spencer map to identify the tangent vectors with extension classes. This way, we get $\beta \circ \eta \circ \alpha = \beta \circ \alpha \circ \eta_w = 0$, as claimed.
\end{proof}

\begin{cor}[cf. \cite{Xia18}*{4.7}] \label{cor:embbundle_tangentmap}
We have the following commutative diagram, where $K_p$ is the kernel of the map $\Ext^1(E, E) \to \Ext^1(G, F)$.
\begin{equation} \label{eq:bundle_diagram}
\begin{tikzcd}
0 \arrow[r] & T_{P, p} \arrow[r] \arrow[d] & T_{M_+, p} \arrow[r] \arrow[d, "\cong"', "KS"] & T_{M_+, p}/T_{P, p} \arrow[r] \arrow[d] & 0 \\
0 \arrow[r] & K_p \arrow[r] & \Ext^1(E, E) \arrow[r] & \Ext^1(G, F).
\end{tikzcd}
\end{equation}
\end{cor}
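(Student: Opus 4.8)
The plan is to assemble the diagram directly from the two displayed results of the preceding Lemma, so the work is essentially bookkeeping rather than new geometry. First I would set up the bottom row: define $K_p = \ker\bigl(\Ext^1(E,E) \to \Ext^1(G,F)\bigr)$, so that
\[
0 \to K_p \to \Ext^1(E,E) \to \Ext^1(G,F)
\]
is exact by definition; note that we only claim exactness at the first two spots, so no surjectivity on the right is asserted. For the top row, the previous Lemma established that $T_{P,p} \to T_{M_+,p}$ is injective, so we may simply \emph{define} $T_{M_+,p}/T_{P,p}$ to be the cokernel, making the top row a short exact sequence tautologically.

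The substance is the commutativity of the two squares. For the middle vertical arrow I would use the Kodaira--Spencer isomorphism $T_{M_+,p} \xrightarrow{\cong} \Ext^1(E,E)$, which is available because $M_+$ parametrizes only $\sigma_+$-stable objects (assumption \textbf{A3}), so $M_+$ is smooth at $[E]$ with tangent space $\Ext^1(E,E)$. The right square commutes provided the composite $T_{P,p} \to T_{M_+,p} \xrightarrow{KS} \Ext^1(E,E) \to \Ext^1(G,F)$ vanishes — and this is exactly the second assertion of the previous Lemma (the composition through $\Ext^1(G,F)$ is zero). Hence the image of $T_{P,p}$ in $\Ext^1(E,E)$ lands in $K_p$, which gives the left vertical arrow $T_{P,p} \to K_p$ and makes the left square commute by construction. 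The induced map on cokernels $T_{M_+,p}/T_{P,p} \to \Ext^1(G,F)$ is then the unique arrow making everything commute, completing the diagram.

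The one point requiring a word of care — and the only place where anything could go wrong — is whether the left vertical map $T_{P,p} \to K_p$ is an isomorphism, as the diagram with the $0$ on the left of the bottom row suggests (the $0$ is only written on the top row in \eqref{eq:bundle_diagram}, so strictly only injectivity is needed). Injectivity of $T_{P,p}\to K_p$ follows from injectivity of $T_{P,p}\to \Ext^1(E,E)$, which was proven. If one wants the stronger statement that $T_{P,p}\to K_p$ is an isomorphism, one would identify $K_p$ with $\Ext^1(F,G)/\C\gamma$ using the long exact sequence argument from the proof of the previous Lemma (where it was shown that the kernel of $\Ext^1(F,G)\to\Ext^1(E,E)$ is exactly $\C\gamma$, hence $\Ext^1(F,G)/\C\gamma \hookrightarrow \Ext^1(E,E)$ has image contained in $K_p$) together with a dimension count comparing $\Ext^1(G,F)$ with the contribution of the base directions $Z$; but for the corollary as stated, only the commutative diagram with exact rows is asserted, so I would keep the argument at the level of the three steps above and not push further.
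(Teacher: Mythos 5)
Your proposal is correct and matches the paper's (implicit) argument: the corollary carries no separate proof in the paper precisely because it is the assembly of the two assertions of the preceding lemma exactly as you describe --- injectivity of $T_{P,p}\to T_{M_+,p}$ gives the top short exact sequence, vanishing of the composite into $\Ext^1(G,F)$ factors the left vertical arrow through $K_p$, and the right vertical arrow is induced on cokernels. One small caveat: the Kodaira--Spencer identification $T_{M_+,p}\cong\Ext^1(E,E)$ does not require (and the paper does not claim) smoothness of $M_+$ at $[E]$; it is the standard identification of the tangent space at a stable object with its first-order deformations, valid regardless of singularities, so you should drop the smoothness assertion while keeping the conclusion.
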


\begin{cor}
The map $P \to M_+$ is a closed embedding.
\end{cor}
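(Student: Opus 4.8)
The plan is to show that $P \to M_+$ is proper, a monomorphism, and unramified, since a proper unramified monomorphism of schemes is a closed immersion. Properness is the easiest ingredient: $P$ is a projective bundle over the closed subscheme $Z \subseteq M_u \times M_w$, and $M_u$, $M_w$ are proper by Theorem \ref{teo:existsmoduli_main}(2); hence $P$ is proper, and a morphism from a proper scheme to a separated scheme is automatically proper. Being a monomorphism follows from combining the two lemmas above: the previous Lemma shows $P \to M_+$ is injective on closed points, and one upgrades this to a scheme-theoretic monomorphism by the same argument run over an arbitrary test scheme — two maps $T \rightrightarrows P$ with equal composite to $M_+$ give, after pulling back the universal extension $\U_P$, two filtrations of the same family of objects, and the fact that $\HOM$ and $\EXT^{<0}$ between the stable pieces $F$ and $G$ vanish (assumption that $u \neq w$, together with stability) forces the two filtrations to coincide. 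Alternatively, and more cheaply, one notes that a monomorphism of finite type between Noetherian schemes that is injective on closed points and injective on all tangent spaces is automatically a monomorphism, so it suffices to record the tangent-space injectivity.

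The tangent-space injectivity is exactly the content of Corollary \ref{cor:embbundle_tangentmap}: the left vertical arrow in the diagram \eqref{eq:bundle_diagram} exhibits $T_{P,p} \hookrightarrow T_{M_+,p}$ as the inclusion of the kernel $K_p$ of $\Ext^1(E,E) \to \Ext^1(G,F)$, via the Kodaira--Spencer isomorphism $T_{M_+,p} \cong \Ext^1(E,E)$. In particular $P \to M_+$ is unramified. Together with properness and the monomorphism property, this gives that $P \to M_+$ is a closed immersion: indeed a proper monomorphism is a closed immersion (it is automatically a locally closed immersion by being an unramified monomorphism, and properness promotes locally closed to closed).

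Concretely, I would structure the proof as three short paragraphs. First: $P$ is proper over $\C$, being a projective bundle over the closed subscheme $Z$ of the proper scheme $M_u \times M_w$; since $M_+$ is separated, $P \to M_+$ is proper. Second: $P \to M_+$ is a monomorphism. Injectivity on closed points is the Lemma above; for the scheme-theoretic statement, given $T \rightrightarrows P$ agreeing over $M_+$, pull back $\U_P$ to get two short exact sequences $0 \to \mathcal{G}_i \to \mathcal{E} \to \mathcal{F}_i \to 0$ of $T$-flat families with $\mathcal{E}_1 \cong \mathcal{E}_2$; the composite $\mathcal{G}_1 \to \mathcal{E} \to \mathcal{F}_2$ vanishes fiberwise because $\Hom(G, F) = 0$ on closed points and these vanishings are open/base-change compatible, so it vanishes, yielding $\mathcal{G}_1 \cong \mathcal{G}_2$, $\mathcal{F}_1 \cong \mathcal{F}_2$ compatibly, and hence the two maps to $P$ coincide up to the scaling that is already quotiented out in the projectivization. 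Third: $P \to M_+$ is unramified by Corollary \ref{cor:embbundle_tangentmap}, as $dp$ is injective at every point $p$. A proper, unramified monomorphism is a closed immersion, so we are done.

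The main obstacle is the monomorphism step done at the level of arbitrary test schemes rather than just closed points: one has to check that the fiberwise vanishing $\Hom_X(G,F)=0$ propagates to the relative $\HOM$ over $T$ (so that the diagonal map in the diagram of the previous Lemma is forced to be zero in families, not just pointwise), which uses flatness of the families together with semicontinuity/cohomology-and-base-change for $\rho_* R\HOM$, exactly the kind of input that produced $\E = R\rho_* R\HOM(p_u^*\U_u, p_w^*\U_w)$ earlier. Everything else — properness from projective-bundle-over-proper, unramifiedness from the tangent computation already in hand — is formal. If one prefers to avoid the families argument entirely, the cleanest route is: proper + injective on closed points + injective on tangent spaces $\Rightarrow$ radicial and unramified $\Rightarrow$ a closed immersion onto its (closed, by properness) image, which is the version I would actually write.
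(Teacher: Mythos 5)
Your proposal is correct and uses exactly the same ingredients as the paper: properness of $P \to M_+$, injectivity on closed points from the preceding lemma, and injectivity on tangent spaces from Corollary \ref{cor:embbundle_tangentmap}. The only difference is the final formal step — the paper deduces finiteness and reduces to the explicit algebraic Lemma \ref{lemma:embbundle_closed} on finite type $\C$-algebras, whereas you invoke the general fact that a proper, radicial, unramified morphism is a closed immersion — and your extra families-based monomorphism argument, while fine, is not needed once one has the "cheaper" route via closed points and tangent vectors that you also record.
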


\begin{proof}
We have that $P \to M_+$ is injective on closed points and on tangent vectors. We also have that $P \to M_+$ is proper (as $P$ and $M_+$ are proper), hence it is finite (as it has finite fibers). To prove that the map is a closed embedding, we can work locally on $M_+$. The result follows now from the following algebraic lemma.
\end{proof}

\begin{lemma} \label{lemma:embbundle_closed}
Let $R, S$ be finite type $\C$-algebras, and let $\phi\colon S \to R$ be a ring homomorphism. Assume that the induced map $\Spec R \to \Spec S$ is injective on closed points and on tangent vectors. Then $S \to R$ is surjective, and so $\Spec R \to \Spec S$ is a closed embedding.
\end{lemma}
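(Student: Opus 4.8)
The plan is to reduce the statement to a Nakayama argument carried out fibre by fibre. Since both schemes are affine, ``$\Spec R\to\Spec S$ is a closed embedding'' means exactly ``$\phi$ is surjective'', so it suffices to show that the $S$-module $Q:=\operatorname{coker}(\phi)$ (with $R$ an $S$-module via $\phi$) vanishes. Here one must use an input that is not among the listed hypotheses but is available in our situation: the morphism $\Spec R\to\Spec S$ is in fact \emph{finite}, which is precisely what the proof of the preceding corollary establishes for $P\to M_+$ (proper with finite fibres). This extra input is genuinely needed -- for instance the open immersion $\Spec\C[x,x^{-1}]\to\Spec\C[x]$ is injective on closed points and on tangent vectors but is not a closed embedding. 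Granting finiteness, $R$ is a finite $S$-module, hence so is $Q$, so $\Supp_S Q$ is closed; as $\Spec S$ is Jacobson, $Q=0$ will follow once $Q\otimes_S(S/\m)=0$ for every maximal ideal $\m\subseteq S$. Tensoring $S\xrightarrow{\phi}R\to Q\to 0$ with $S/\m=\C$ identifies this vanishing with the condition that the finite-dimensional $\C$-algebra $R/\m R$ be spanned by the class of $1$, i.e.\ that $R/\m R$ be $0$ or $\C$.

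So the main step is to analyse $R/\m R$ for a fixed maximal ideal $\m\subseteq S$. Its spectrum is the fibre of $\Spec R\to\Spec S$ over $\m$: a finite set of points, all closed in $\Spec R$ because $R/\m R$ is Artinian, hence empty or a single point by the injectivity-on-closed-points hypothesis. If it is empty then $R/\m R=0$ and we are done for this $\m$. Otherwise it is one point $\n\in\Spec R$ with $\phi^{-1}(\n)=\m$, so $R/\m R$ is a local Artinian $\C$-algebra with residue field $\C$ and maximal ideal $\overline{\n}:=\n/\m R$. Now injectivity of the tangent map $T_{\n}\Spec R\to T_{\m}\Spec S$ is injectivity of the $\C$-linear dual of the map $\m/\m^2\to\n/\n^2$ induced by $\phi$, hence is equivalent to surjectivity of that map, i.e.\ to $\n=\m R+\n^2$; passing to $R/\m R$ this reads $\overline{\n}=\overline{\n}^2$. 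Since $\overline{\n}$ is nilpotent, iterating gives $\overline{\n}=0$, so $R/\m R\cong\C$. In either case $Q\otimes_S(S/\m)=0$.

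Running this over all maximal ideals of $S$ gives $Q=0$, so $\phi$ is surjective and $\Spec R\cong\Spec(S/\ker\phi)\hookrightarrow\Spec S$ is a closed embedding. The only real obstacle is the one flagged above: the argument does use finiteness of $\Spec R\to\Spec S$, which in the intended application is free and is cleanest to quote from the proof of the corollary. (In scheme-theoretic language the tangent hypothesis says $\Omega_{R/S}$ has empty support -- it vanishes at every closed point of the Jacobson scheme $\Spec R$ -- so $\Spec R\to\Spec S$ is unramified, and a finite unramified morphism with at most one point in each fibre is a closed immersion; this is just another packaging of the computation above.)
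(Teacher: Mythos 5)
Your proof is correct, and it is worth noting that the paper states this lemma without supplying any proof at all, so there is nothing to compare against; your argument (reduce to the fibre $R/\m R$ over each maximal ideal, use injectivity on closed points to make the fibre local Artinian with residue field $\C$, use injectivity on tangent vectors to get $\overline{\n}=\overline{\n}^2$ and hence $\overline{\n}=0$ by nilpotence, then conclude $Q=\operatorname{coker}(\phi)=0$ by Nakayama over every $\m$) is the standard and correct way to do it. More importantly, you have correctly diagnosed that the lemma as literally stated is \emph{false}: $\C[x]\to\C[x,x^{-1}]$ induces an open immersion that is injective on closed points and an isomorphism on tangent spaces, yet is not surjective on rings. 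Some hypothesis forcing $R$ to be a finite $S$-module is genuinely needed, and as you observe it is harmless in context: the proof of the corollary invoking the lemma explicitly establishes that $P\to M_+$ is finite (proper with finite fibres) before reducing to the affine-local statement, so the intended hypothesis is that $\phi$ makes $R$ a finite $S$-module. With that hypothesis added, your proof is complete; the only cosmetic remark is that once $Q$ is a finite module over the Noetherian ring $S$, the vanishing $Q=0$ already follows from $Q/\m Q=0$ for all maximal $\m$ by localizing and applying Nakayama, so the appeal to $\Spec S$ being Jacobson and to closedness of $\Supp_S Q$ is not needed.
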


\subsection{Elementary modification} \label{subsec:em}

In the previous subsection we were able to identify the objects inside $M_+$ that are destabilized by the wall-crossing, as the ones parametrized by $P$. The question now is: how do we replace this locus with $\sigma_-$-stable objects? 

Informally, the strategy is to blow-up $P \subset M_+$, and modify the family to get $\sigma_-$-stable objects. This is a bit delicate, as in general $M_+$ might not be smooth. In the cases we are interested, it turns out that each irreducible component of $M_+$ will be smooth. So instead we will work on each irreducible component of $M_+$.

This way, we fix an irreducible component $M_+^i$ of $M_+$, and we let $P^i =P \cap M_+^i$ be the set-theoretic intersection, endowed with its reduced structure. Let us assume the (last!) condition.
\begin{enumerate}
\item[\textbf{A6.}] The space $P^i$ is smooth, and $M_+^i$ is smooth along $P^i$. 
\end{enumerate}
In this case, the diagram \eqref{eq:bundle_diagram} from Corollary \ref{cor:embbundle_tangentmap} gives us:
\begin{equation} \label{eq:em_tangentmap}
\begin{tikzcd}
0 \arrow[r] & T_{P^i, p} \arrow[r] \arrow[d] & T_{M_+^i, p} \arrow[r] \arrow[d, hook', "KS"] & N_{P_i/M_+^i, p} \arrow[r] \arrow[d] & 0 \\
0 \arrow[r] & K_p \arrow[r] & \Ext^1(E, E) \arrow[r] & \Ext^1(G, F).
\end{tikzcd}
\end{equation}
We point out that the middle map might not be an isomorphism anymore, as we are dealing only with an irreducible component of $M_+$. In any case, it is still injective. 

We consider the blow-up $\Bl_{P^i} M_+^i$ of $M_+^i$ along $P^i$. Note that the exceptional divisor is the projectivization of the normal bundle $N_{P^i/M_+^i}$, which is now a locally free sheaf on $P^i$. In particular, giving a point of the exceptional divisor is the same datum as giving a point $p \in P^i$, together with a non-zero element of $N_{P^i/N_+^i, p}$ (up to a constant). We have the following diagram:
\[ \begin{tikzcd} \P(N_{P^i/M_+^i}) \arrow[r, hook, "d"] \arrow[d, "c"'] & \Bl_{P^i} M_+^i \arrow[d, "b"] \\ P^i \arrow[r, hook] & M_+^i. \end{tikzcd} \]

Let us take $Lb^\ast \U_{M_+^i}$, the pullback of the universal family of $M_+^i \times X$. The restriction via $d$ is isomorphic to the pullback of the family $c^\ast \U_{P^i}$ on $P^i\times X$, up to a line bundle on $P^i$: $Ld^\ast Lb^\ast \U_{M_+^i} \cong c^\ast (\U_{P^i} \otimes \rho^\ast \L)$. By the construction in Proposition \ref{prop:bundle_family}, the right hand side admits a map to $(h^\ast  Lj_Z^\ast \pi_A^\ast \U_A \otimes \rho^\ast \L)|_{P^i}$. Set $\K \in \DC^b(\Bl_{P^i}M_+^i)$ to be the object fitting in the triangle
\begin{equation} \label{eq:em_modification}
\K \to Lb^\ast \U_{M_+^i} \to d_\ast \left( (h^\ast  Lj_Z^\ast \pi_A^\ast \U_A \otimes \rho^\ast \L)|_{P^i}\right) \to \K[1].
\end{equation}

\begin{prop}[cf. \cite{Xia18}*{4.14}] \label{prop:em_main}
The object $\K \in \DC^b(\Bl_{P^i} M_+^i \times X)$ is a flat family of objects with vector $[v]$. If $x \in \Bl_{P^i} M_+^i$ is not in the exceptional divisor of $b$, the associated object $\K_x$ equals the object $\U_{M_i^+}|_{b(x)}$. Otherwise, if $x=d(y)$ is in the exceptional divisor (with $y \in \P(N_{P^i/M_i^+})$), then $\K_x$ fits into a triangle $F \to \K_x \to G \xrightarrow{\xi} F[1]$, where $F, G$ are determined by $c(y)$, and $\xi$ is (up to a constant) the image of $x$ via the map $N_{P^i/M_+^i, c(y)} \to \Ext^1(G, F)$. 
\end{prop}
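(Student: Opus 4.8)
The plan is to verify the three assertions in turn, reducing everything to base change along $b$ and $d$ together with the triangle \eqref{eq:em_modification}. First I would establish flatness of $\K$. Away from the exceptional divisor of $b$ this is automatic: $b$ is an isomorphism there, so $\K$ restricts to (a twist of) $\U_{M_+^i}$, which is flat over $M_+^i$. The content is concentrated near the exceptional divisor $\P(N_{P^i/M_+^i})$. Here I would argue that $\K$ is $\Bl_{P^i}M_+^i$-perfect (being the cone of a map between perfect complexes, using \textbf{A6} so that $d_\ast$ of a perfect complex on the smooth divisor is perfect on the blow-up), and then check fiberwise that the cohomology sheaves of each $\K_x$ are concentrated in the expected degree with constant numerical class $[v]$; by the standard flatness criterion (constancy of Hilbert polynomial of fibers of a perfect complex, cf. the cohomology-and-base-change argument in \cite{Xia18}*{4.14}) this gives flatness. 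The numerical class is $[v]$ on the complement of the exceptional divisor by continuity, hence everywhere.

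Second, the identification of the fibers. For $x \notin \Exc(b)$, applying $Lx^\ast$ to \eqref{eq:em_modification}: since $x$ avoids the support of $d_\ast(\cdots)$, the third term restricts to $0$, so $Lx^\ast \K \cong Lx^\ast Lb^\ast \U_{M_+^i} = \U_{M_+^i}|_{b(x)}$, as claimed. For $x = d(y)$ in the exceptional divisor, I would restrict \eqref{eq:em_modification} along $x$ and use base change for the square relating $x \hookrightarrow \Bl_{P^i}M_+^i$, the divisor $d$, and the projection to $P^i$. The restriction of $d_\ast\big((h^\ast Lj_Z^\ast \pi_A^\ast \U_A \otimes \rho^\ast \L)|_{P^i}\big)$ to $x$ computes to $F \oplus F[-1]$ (the usual "self-intersection" contribution: restricting a pushforward from a Cartier divisor to a point of that divisor produces the object in two consecutive degrees, twisted by the conormal line), while the restriction of $Lb^\ast\U_{M_+^i}$ to $x$ is $E$. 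Chasing the resulting triangle $F[-1] \to \text{(something)} \to E \to F$ and peeling off the split summand, one is left with a triangle $F \to \K_x \to G \xrightarrow{\xi} F[1]$. That $F$ and $G$ are the objects determined by $c(y) \in P^i$ is immediate from the construction of $\U_{P^i}$ in Proposition \ref{prop:bundle_family}.

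The main obstacle — and the step I would spend the most care on — is pinning down the connecting map $\xi$ and showing it is, up to scalar, the image of $y$ under $N_{P^i/M_+^i, c(y)} \to \Ext^1(G, F)$ coming from \eqref{eq:em_tangentmap}. The point is to match two a priori different descriptions of the same datum: on one hand $\xi$ arises from the triangle \eqref{eq:em_modification} and the identification of $\Exc(b)$ with $\P(N_{P^i/M_+^i})$; on the other, a point of the exceptional divisor is literally a normal direction $y$, which maps to $\Ext^1(G,F)$ via the right-hand vertical arrow of diagram \eqref{eq:em_tangentmap} (the Kodaira–Spencer class of the deformation of $E$ along $y$, pushed to $\Ext^1(G,F)$). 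I would argue this by a first-order/Kodaira–Spencer computation: the restriction of $Lb^\ast\U_{M_+^i}$ to the first-order neighborhood of $x$ in $\Exc(b)$ records exactly the KS class in $\Ext^1(E,E)$ of the normal direction, and the triangle \eqref{eq:em_modification} is precisely the universal elementary modification that trades that class for its image in $\Ext^1(G,F)$ — this is the same bookkeeping as in \cite{Xia18}*{4.14}, and I would cite that computation, adapting signs and the line-bundle twist $\L$. Once $\xi$ is identified with the image of $y$, stability of $\K_x$ on the $\sigma_-$ side follows from the wall-crossing setup (\textbf{A1}–\textbf{A3}) exactly as for the objects of $P$ in the opposite direction, completing the proof.
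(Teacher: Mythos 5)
Your proposal is correct and follows essentially the same route as the paper: restrict the defining triangle \eqref{eq:em_modification} to each fiber, use that the pull-back of a pushforward from the (Cartier) exceptional divisor sits in two consecutive cohomological degrees, and take the long exact sequence of cohomology in the heart $\A$ to extract the triangle $F \to \K_x \to G \xrightarrow{\xi} F[1]$, with the identification of $\xi$ handled by the same first-order bookkeeping as in \cite{Xia18}*{4.14}. One cosmetic slip: the restriction of $d_\ast(\cdots)$ to a point of the exceptional divisor is $F \oplus F[1]$ (cohomology in degrees $-1$ and $0$, the degree $-1$ copy carrying the conormal twist before restricting to the point), not $F \oplus F[-1]$; this does not affect the argument.
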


\begin{proof}
Let us look at the objects $\K_x \in \DC^b(X)$, for various points $x \in \Bl_{P^i} M_+^i$. If $x$ is not in the exceptional divisor, then \eqref{eq:em_modification} restricts to $\K_x \cong \U_{M_i^+}|_{b(x)}$. 

For the other case, let us denote $p=c(y) = c(d^{-1}(x))$, the point of $P^i$ where $x$ lies over. The triangle \eqref{eq:em_modification} restricts to
\[ \K_x \to \U_{M_+^i, p} \xrightarrow{\beta} \left( (h^\ast  Lj_Z^\ast \pi_A^\ast \U_A \otimes \rho^\ast \L)|_{P^i}\right)|_p \to \ast. \]
Let us take cohomology with respect to the heart $\A$ (cf. Remark \ref{remark:bundle_heart}). For the middle object we clearly get that $\U_{M_+^i, p} = E$, the object in $\DC^b(X)$ parametrized by $p$. For the rightmost object we have
\[ \HC_\A^0 \left( (h^\ast  Lj_Z^\ast \pi_A^\ast \U_A \otimes \rho^\ast \L)|_{P^i}\right)|_p = \HC_\A^{-1} \left( (h^\ast  Lj_Z^\ast \pi_A^\ast \U_A \otimes \rho^\ast \L)|_{P^i}\right)|_p = F, \]
and zero otherwise, and so we get a triangle 
\[ F[1] \to \left( (h^\ast  Lj_Z^\ast \pi_A^\ast \U_A \otimes \rho^\ast \L)|_{P^i}\right)|_p \to F \to \ast. \]
Following the identifications, the map $\beta\colon E \to \left( (h^\ast  Lj_Z^\ast \pi_A^\ast \U_A \otimes \rho^\ast \L)|_{P^i}\right)|_p$ is now given by the two maps $E \to F$ and $\xi$. At last, by taking the long exact sequence of cohomology objects with respect to $\A$, and using that $\ker(E \to F) = G$, we get the required result.
\end{proof}

Let us point out two small subtleties of Proposition \ref{prop:em_main}. First, it might be the case that the objects $\K_x$, for $x$ in the exceptional divisor, are in fact split extensions. This happens if $N_{P^i/M_+^i, c(y)} \to \Ext^1(G, F)$ is not injective. Second, it is possible that $\K_x \cong \K_{x'}$ for $x \neq x'$ in the exceptional divisor.

To address the first part, we need a way to check whether the map $N_{P^i/M_+^i} \to \Ext^1(G, F)$ is injective. In applications, this can be done by considering the rest of the diagram \eqref{eq:em_tangentmap}. 

The second point is a bit more delicate. It is easy to see that if $N_{P_i/M_+^i, p} \to \Ext^1(G, F)$ is injective, then the extension classes $\{\K_x \}_{x \in d(c^{-1}(p))}$ are pairwise non-isomorphic. However, it might well be the case that $\K_x \cong \K_{x'}$ for some $x, x'$ with $c(x) \neq c(x')$; cf. \cite{AB13}. Regardless, we get the following result.

\begin{cor} \label{cor:em_newfamily}
Assume that for all $p \in P^i$ the map $N_{P^i/M_+^i, p} \to \Ext^1(G, F)$ is injective. Then the object $\K \in \DC^b(\Bl_{P^i} M_+^i \times X)$ defines a map $\Bl_{P^i} M_+^i \to M_-$.
\end{cor}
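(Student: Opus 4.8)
The plan is to show that the object $\K$ constructed in \eqref{eq:em_modification} satisfies the universal property needed to produce a morphism $\Bl_{P^i} M_+^i \to M_- = M_{\sigma_-}(v)$, i.e.\ that it is a flat family of $\sigma_-$-stable objects with numerical class $[v]$ over $\Bl_{P^i} M_+^i$. By Proposition \ref{prop:em_main} we already know $\K$ is a flat family of objects of class $[v]$, so the only thing left is to verify that each fiber $\K_x$ is $\sigma_-$-stable. There are two cases to treat, according to whether $x$ lies in the exceptional divisor of $b\colon \Bl_{P^i} M_+^i \to M_+^i$ or not.

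First I would handle the generic locus. If $x \notin \Exc(b)$, then by Proposition \ref{prop:em_main} we have $\K_x \cong \U_{M_+^i}|_{b(x)}$, which is a $\sigma_0$-semistable object of class $v$, and in fact $\sigma_+$-semistable; since $v$ is primitive (assumption \textbf{A3}) it is $\sigma_+$-stable. Now $b(x) \notin P^i$ means $b(x)$ is \emph{not} in the image of the bundle $P$ of extensions $0 \to G \to E \to F \to 0$, so by assumptions \textbf{A1}--\textbf{A2} the object $\U_{M_+^i}|_{b(x)}$ is not strictly $\sigma_0$-semistable (the only wall through $\sigma_0$ corresponds to the decomposition $v = u + w$, and the strictly semistable objects are precisely those admitting such a sub/quotient, i.e.\ those parametrized by $P$). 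An object that is $\sigma_0$-stable remains stable in a neighborhood of $\sigma_0$, hence is $\sigma_-$-stable. This settles the generic case.

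Next, for $x = d(y)$ in the exceptional divisor, Proposition \ref{prop:em_main} gives a triangle $F \to \K_x \to G \xrightarrow{\xi} F[1]$ with $F$, $G$ determined by $c(y) \in P^i$ and $\xi$ the image of $x$ under $N_{P^i/M_+^i, c(y)} \to \Ext^1(G, F)$, which is injective by hypothesis, so $\xi \neq 0$ and $\K_x$ is a genuine (non-split) extension. On the wall, $F \in M_u$ and $G \in M_w$ are $\sigma_0$-stable of the same phase, so $\K_x$ is $\sigma_0$-semistable with Jordan--Hölder factors $F$ and $G$. The key point is that on the $\sigma_-$ side the \emph{opposite} inequality of slopes holds: one of $\phi_{\sigma_-}(G) < \phi_{\sigma_-}(F)$ or its reverse is the one that makes an extension $F \to \K_x \to G$ potentially $\sigma_-$-stable, and by construction we have chosen the direction of the extension $\K$ (as opposed to the one in Proposition \ref{prop:bundle_family}, which is $\sigma_+$-stable) to match $\sigma_-$. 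Then $\K_x$ is $\sigma_-$-semistable, and since $v$ is primitive, strictly semistable objects form a proper closed subset of any $\sigma_-$-moduli; but a non-split extension of two stable objects $G$, $F$ with $\Hom(G,F) = \Hom(F,G) = 0$ and non-isomorphic classes $u \neq w$ has no proper subobject of the same $\sigma_-$-slope (the only candidate destabilizing sub/quotients are $F$ and $G$ themselves, and the non-splitness rules out the destabilizing one), hence $\K_x$ is $\sigma_-$-stable. The main obstacle here is being careful about which of $F$, $G$ is the sub and which the quotient relative to $\sigma_-$, and verifying that no \emph{other} HN-type filtration on the $\sigma_-$ side could destabilize $\K_x$ — this is where one invokes assumptions \textbf{A1}, \textbf{A2} (only one wall, and $\sigma_0$ on no $u$- or $w$-wall, so $F$, $G$ stay stable near $\sigma_0$) together with the primitivity of $v$.

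Finally, having shown every fiber $\K_x$ is $\sigma_-$-stable of class $v$ and that $\K$ is $\Bl_{P^i}M_+^i$-flat, the family $\K$ (suitably twisted, as in Remark \ref{remark:existsmoduli_twisted}) defines a morphism to the moduli functor $\M_{\sigma_-}(v)$, and composing with $\M_{\sigma_-}(v) \to M_{\sigma_-}(v) = M_-$ yields the desired map $\Bl_{P^i} M_+^i \to M_-$. I would remark that this map need not be injective — as noted before the corollary, distinct points of the exceptional divisor lying over different points of $P^i$ may give isomorphic objects — but injectivity is not claimed, only the existence of the morphism.
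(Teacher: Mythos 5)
Your argument is correct and is exactly the one the paper leaves implicit: Proposition \ref{prop:em_main} identifies the fibers of $\K$, the injectivity hypothesis rules out split extensions on the exceptional divisor, the non-split extensions $F \to \K_x \to G$ of the two stable objects (with $\Hom(G,F)=0$ and, by \textbf{A1}, no other candidate destabilizers) are $\sigma_-$-stable, and a flat family of $\sigma_-$-stable objects of class $v$ induces a map to $M_-$. No gaps.
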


\subsection{A criterion for isomorphisms} \label{subsec:critiso}

Using the constructions from Subsections \ref{subsec:bundle} and \ref{subsec:em}, we are able to produce various closed subschemes of $M_-$: the image of the bundle $P_-$, and the elementary modifications $\tilde{M}_-^i \to M_-$ respectively. Of course, these closed subschemes are not disjoint in general: each $\tilde{M}_-^i$ will intersect $P-_-$, say at a closed subscheme $Z_i$. However, the images of $P_-$ and $\tilde{M}_-^i$ cover all of $M_-$ \emph{as a set}.

Now, the maps $P_- \to M_-$ and $\tilde{M}_-^i \to M_-$ give us a surjective map $P_- \cup \bigcup_i \tilde{M}_-^i \to M_-$. Taking into account the intersections, we get an induced map $P_- \cup_{Z_i} \bigcup_i \tilde{M}_-^i \to M_-$ from the gluing of $P_-$ and $\bigcup_i \tilde{M}_-^i$ along $Z=\bigcup_i Z_i$. Note however that it is not clear at all whether this glued scheme is isomorphic to $M_-$. 

It turns out that the only remaining obstruction is that $M_-$ might be non-reduced. This is a consequence of the following result, that can be seen as a strengthening of Lemma \ref{lemma:embbundle_closed}.

\begin{lemma}[cf. \cite{TX22}*{7.11}] \label{lemma:critiso_main}
Let $X \to Y$ be a proper morphism between finite type $\C$-schemes. Assume that $Y$ is reduced, and that the map $X \to Y$ induces a bijection on closed points, and isomorphisms on tangent vectors. Then $X \to Y$ is an isomorphism.
\end{lemma}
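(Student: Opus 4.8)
The plan is to reduce to Lemma~\ref{lemma:embbundle_closed} (or rather, reprove its key input in this more precise setting) and then leverage the reducedness of $Y$ to upgrade a closed immersion to an isomorphism. First I would note that, exactly as in the proof following Lemma~\ref{lemma:embbundle_closed}, the morphism $f\colon X \to Y$ is proper with finite fibers (being injective on closed points), hence finite; so $f_\ast \O_X$ is a coherent sheaf of $\O_Y$-algebras and the question is local on $Y$, i.e. we may assume $Y = \Spec S$ and $X = \Spec R$ with $S \to R$ a finite ring map, and it suffices to show $S \to R$ is an isomorphism. Injectivity of $S \to R$ follows because $f$ is dominant (surjective on closed points, $Y$ of finite type over $\C$) and $Y$ is reduced: any $s$ mapping to $0$ in $R$ vanishes on all closed points of $Y$, hence is nilpotent, hence $0$.

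For surjectivity, the hypothesis now gives us \emph{more} than in Lemma~\ref{lemma:embbundle_closed}: not only injectivity on tangent vectors but bijectivity, i.e. for every closed point $y \in Y$ with unique preimage $x$, the map $\m_y/\m_y^2 \to \m_x/\m_x^2$ is an isomorphism. Combined with the bijection on closed points, I would argue as follows. By Lemma~\ref{lemma:embbundle_closed} itself, $S \to R$ is already surjective, so $f$ is a closed immersion, cut out by an ideal $I = \ker(S \to R) \subseteq S$; we must show $I = 0$. Since $f$ is bijective on closed points, $V(I)$ contains every closed point of $\Spec S$, so $I$ is contained in every maximal ideal of $S$; as $S$ is a finite type $\C$-algebra (hence Jacobson), $I$ lies in the nilradical of $S$, and since $Y$ is reduced, $I = 0$. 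Hence $S \to R$ is an isomorphism and $f$ is an isomorphism. (The bijectivity-on-tangent-vectors hypothesis is in fact what makes Lemma~\ref{lemma:embbundle_closed} applicable on the nose — injectivity on tangent vectors suffices for that step — so the reducedness of $Y$ alone does the rest of the work.)

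The main obstacle is really just bookkeeping: one must be careful that "injective on closed points and on tangent vectors" in the statement of Lemma~\ref{lemma:embbundle_closed} transfers correctly to the local ring maps after passing to an affine chart, so that the cited lemma applies verbatim to conclude surjectivity of $S \to R$. Once that closed-immersion statement is in hand, the passage from closed immersion to isomorphism is immediate from $Y$ reduced plus surjectivity on points, via the Jacobson/nilradical argument above; no tangent space information is even needed at that last stage. One should double-check that finiteness of the fibers genuinely follows from injectivity on closed points together with properness — it does, since a proper morphism with finite fibers is finite — so that the reduction to the affine ring-theoretic statement is legitimate.
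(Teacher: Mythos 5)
Your argument is correct. Note that the paper itself gives no proof of this lemma, deferring to \cite{TX22}*{Lemma 7.11}; your write-up supplies exactly the intended argument (properness plus injectivity on closed points gives finiteness, the closed-embedding criterion of Lemma~\ref{lemma:embbundle_closed} gives surjectivity of $S \to R$, and then the kernel lies in every maximal ideal, hence in the nilradical, hence vanishes since $Y$ is reduced and $S$ is Jacobson). The one point worth flagging is that Lemma~\ref{lemma:embbundle_closed} as literally stated has no finiteness hypothesis and is false without one (e.g.\ $\C[x] \to \C[x,x^{-1}]$ is injective on closed points and tangent vectors but not surjective), so your preliminary reduction to a \emph{finite} ring map is not mere bookkeeping but the step that makes the cited lemma applicable; you have that step in place, so the proof goes through.
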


We will discuss in the next section how to determine the local structure of $M_-$ using deformation theory.

\section{Local structure via DGLA} \label{sec:localstr}

So far, we have only focused on the global structure of moduli spaces: how to determine their irreducible components, and how to describe which points lie on multiple components. We still have to describe the local structure of these spaces. To do so, we will briefly review the language of differential graded Lie algebras (or DGLAs for short), following \cite{Man09}. In our case, the local structure of the moduli space at an object $E \in \DC^b(X)$ is governed by $R\Hom(E, E)$ with the commutator induced by composition.

Now, there is a catch: computing the DGLA structure $R\Hom(E, E)$ is not an easy task. In the literature this is usually handled by replacing $E$ with a complex of injective objects (e.g. \cite{LS06}*{Appendix A}), or via Dolbeault resolutions (e.g \cite{CPZ24}*{\textsection 2.2}). For an explicit complex $E \in \DC^b(X)$, however, computing either model is not easy.

The alternative is to use a \v{C}ech cover. Informally, one wants to use the identity
\[ R\Hom(E, E) = R\Gamma(X, R\HOM(E, E)) = \check{C}(\Ucover, R\HOM(E, E)), \]
where $\Ucover$ is an affine open cover. The derived sheaf Hom $R\HOM(E, E)$ can be computed with a locally free resolution, which gives hopes to get an explicit DGLA model. However, it is not immediately clear how to produce a DGLA structure on the total complex $\check{C}(\Ucover, \cdot)$. Instead, we work with the \emph{semicosimplicial DGLA}
\[ \begin{tikzcd}[column sep=small]
\bigoplus_i \Gamma(U_i, R\HOM(E, E)) \arrow[r, shift left] \arrow[r, shift right] & \bigoplus_{i<j} \Gamma(U_{ij}, R\HOM(E, E)) \arrow[r] \arrow[r, shift left=1.5] \arrow[r, shift right=1.5] & \cdots
\end{tikzcd} \]
associated to the \v{C}ech cover. 

We will review the basics on DGLAs in Subsection \ref{subsec:DGLA}, and of semicosimplicial DGLAs in Subsection \ref{subsec:semiDGLA}. This will allow us to get an explicit functor of Artin rings controlling the deformation theory of an object $E \in \DC^b(X)$, endowed with an explicit obstruction theory.

We will then review the construction of the hull of a functor of Artin rings in Subsection \ref{subsec:versal}, highlighting how the obstruction theory allows us to describe the hull. At last, we will review the relation between the local structure on the good moduli space and the deformation functor of $E \in \DC^b(X)$ in Subsection \ref{subsec:DGLAmoduli}.

\subsection{Differential graded Lie algebras} \label{subsec:DGLA}

We follow \cite{Man09}*{\textsection 1, 4}. Recall that a \emph{differential graded Lie algebra} $L$ is the data of a complex of $\C$-vector spaces $(L^\bullet, d)$, and a graded bilinear map $[-, -]\colon L \times L \to L$ satisfying the following properties:
\begin{enumerate}
\item $[a, b]=-(-1)^{\deg a \deg b}[b, a]$,
\item $[a, [b, c]] = [[a, b],c]+(-1)^{\deg a \deg b} [b, [a, c]]$,
\item $d[a, b] = [da, b] + (-1)^{\deg a} [a, db]$. 
\end{enumerate}

Given a DGLA $L$, we associate two functors of Artin rings associated to it, as follows. First, we have the \emph{Maurer--Cartan functor} 
\[ \MC_L\colon \Art \to \Set, \qquad \MC_L(A) = \{ x \in L^1 \otimes \m_A : dx + \frac{1}{2}[x,x] = 0 \}. \]
(Here, the complex $L \otimes \m_A$ of $\C$-vector spaces is endowed with the differential $d(x \otimes a) = dx \otimes a$ and the bracket $[x \otimes a, x' \otimes a'] = [x, x'] \otimes (aa')$.) Two elements $x, y \in \MC_L(A)$ are \emph{gauge equivalent} if there exists $z \in L^0 \otimes \m_A$ such that
\[ y = e^z \ast x = x + \sum_{n=0}^\infty \frac{[z, -]^n}{(n+1)!}([z, x]-dz). \]
(Note that this is a finite sum, as $\m_A$ is a nilpotent algebra.) We let $\Def_L\colon \Art \to \Set$ to be the quotient of $\MC_L$ modulo gauge equivalence. In particular, the projection defines a smooth map $\MC_L \to \Def_L$.

\begin{example} \label{ex:DGLA_tangentspaces}
Note that $\MC_L(\C) = \Def_L(\C) = \{\ast\}$ are singletons. One quickly checks that $\MC_L(\C[\epsilon]/\epsilon^2) = Z^1(L)\otimes \C\epsilon$, while $\Def_L(\C[\epsilon]/\epsilon^2) = H^1(L) \otimes \C\epsilon$.
\end{example}

The functor $\MC_L$ admits an obstruction theory with values in $H^2(L)$, defined as follows. Consider a small extension $e\colon 0 \to M \to A \to B \to 0$, where $M\m_A = 0$. Given $y \in \MC_L(B)$, we take any lift $x \in L^2 \otimes \m_B$ of $y$. One quickly verifies that $h = dx + \frac{1}{2}[x,x]$ lies in $L^2 \otimes M$, and that $dh=0$. Moreover, the class of $h$ in $H^2(L) \otimes M$ does not depend on the lift, and it vanishes if and only if there exists a lift $x \in \MC_L(A)$ of $y$. This gives us an obstruction theory $ob_e\colon \MC_L(A) \to H^2(L)\otimes M$ for $\MC_L$. This also defines an obstruction theory for $\Def_L$, cf. \cite{Man09}*{4.13}.

\begin{remark}[\cite{Man22}*{\textsection B.3}] \label{remark:DGLA_primary}
Consider the small extension
\[ 0 \to \C xy \to \frac{\C[x,y]}{(x^2,y^2)} \to \frac{\C[x,y]}{(x,y)^2} \to 0. \]
The obstruction map $\Def_L(\C[x,y]/(x,y)^2) \to H^2(L)$ is identified with the symmetric bilinear form $[-, -]\colon H^1(L) \times H^1(L) \to H^2(L)$ induced by the Lie bracket on $L$. The associated quadratic form is the \emph{primary obstruction}, denoted by $\kappa_2$.
\end{remark}

The main source of examples for us is given by Hom complexes. Let $X$ be a smooth quasi-projective variety over $\C$. Let $E, F, G \in \DC^b(X)$ be three objects represented by finite complexes of locally free sheaves. To start, recall the \emph{Hom complex} $\HOM(E, F)^n = \bigoplus_{t-s=n} \HOM(E^s, F^t)$, with differential $d(f) = d_F^t \circ f - (-1)^n f \circ d_E^{s-1}$ for $f \in \HOM(E^s, F^t)$. There is a composition map 
\begin{equation} \label{eq:DGLAHom_composition}
\HOM(F, G) \otimes \HOM(E, F) \to \HOM(E, G)
\end{equation}
given by pointwise composition (without additional signs). This defines a morphism of complexes satisfying all the usual properties, cf. \cite{Stacks}*{Tag 0A8H}. In particular, this endows $\HOM(E, E)$ with a differential graded algebra structure.

\begin{claim}
This construction computes (a representative of) $R\HOM(E, E)$ with its differential graded algebra structure. This follows immediately from the fact that $E$ is a complex of locally free sheaves.
\end{claim}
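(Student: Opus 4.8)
The statement to prove is the \textbf{Claim}: the Hom-complex construction $\HOM(E,E)$, for $E$ a bounded complex of locally free sheaves, computes a representative of $R\HOM(E,E)$ together with its differential graded algebra structure. The plan is to unwind this into two separate assertions — correctness at the level of the derived category (it computes the right object) and correctness at the level of the DG-algebra structure — and to handle each by citing the appropriate homological facts, rather than by any computation.

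First I would address the object-level statement. Recall that $R\HOM(E,F)$ is by definition obtained by replacing $F$ with a bounded-below complex of injectives $I^\bullet$ and forming $\HOM^\bullet(E, I^\bullet)$; alternatively, and more usefully here, one may instead replace $E$ by a bounded-above complex of locally free (or flat, or more generally $\HOM$-acyclic) sheaves and take the naive Hom-complex, since $\HOM(-, \mathcal{G})$ sends such resolutions to quasi-isomorphisms when $\mathcal{G}$ is arbitrary — this is the standard fact that a bounded complex of locally free sheaves is a ``K-flat'' (indeed ``K-projective'' up to the subtlety of working over a non-affine base, where one should be slightly careful) replacement for the purposes of $R\HOM$. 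Since $E$ is already a finite complex of locally free sheaves, no replacement is needed on either side: $\HOM^\bullet(E,E)$ already computes $R\HOM(E,E)$. Concretely I would invoke \cite{Stacks}*{Tag 0A8H} (already cited in the paper for the compatibility of composition), together with the standard statement that a finite complex of locally free sheaves of finite rank is a complex of flats and is ``good'' for computing $\mathrm{RHom}$ out of it; on a Noetherian scheme of finite dimension this is classical, and in the quasi-projective setting under consideration it is exactly \cite{Stacks}*{Tag 0A5W} / the surrounding discussion. The key point to state is that because $E^i$ is locally free, $\HOM(E^i, -)$ is exact, so there are no higher derived functors to worry about and the naive Hom-complex already has the correct cohomology, functorially.

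Second I would address the structure-level statement, which is the only part with genuine content: that the DG-algebra structure on $\HOM^\bullet(E,E)$ induced by pointwise composition \eqref{eq:DGLAHom_composition} is the ``correct'' one, i.e. it induces on cohomology the Yoneda composition $\Ext^i(E,E)\times\Ext^j(E,E)\to\Ext^{i+j}(E,E)$ and, more strongly, agrees up to quasi-isomorphism of DG-algebras with any other model (injective resolutions, Dolbeault). The plan is: (i) observe that on $H^0$-of-graded-pieces the composition map recovers morphisms in $\DC^b(X)$ and their composites, by the standard identification of $\Hom_{\DC^b(X)}(E,F[n])$ with $H^n(\HOM^\bullet(E,F))$ when $E$ is a complex of locally frees — again \cite{Stacks}*{Tag 0A8H}; and (ii) for the stronger uniqueness statement, use that given an injective resolution $E\xrightarrow{\simeq} I^\bullet$, the map $\HOM^\bullet(E,E)\to\HOM^\bullet(E,I^\bullet)$ induced by postcomposition is a morphism of DG-algebras (for $\HOM^\bullet(E,E)$ acting on $\HOM^\bullet(E,I^\bullet)$ on the left) and a quasi-isomorphism, and compatibly the map $\HOM^\bullet(E,E)\to \HOM^\bullet(I^\bullet,I^\bullet)$ obtained by also resolving the source is a DG-algebra quasi-isomorphism. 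This exhibits the pointwise-composition model and the injective model as linked by an explicit zig-zag of DG-algebra quasi-isomorphisms, so they represent the same object of the homotopy category of DG-algebras (hence the same $A_\infty$-algebra, and the same DGLA under the commutator bracket); I would reference \cite{LS06}*{Appendix A} for the injective model and note the compatibility is immediate from functoriality of composition of Hom-complexes.

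The main obstacle, such as it is, is purely bookkeeping: making precise the assertion that a bounded complex of locally free sheaves is an adequate resolution for $R\HOM$ \emph{out of it} on a general quasi-projective $X$ — the cleanest statement uses that finite locally free complexes are K-flat and additionally ``strictly perfect,'' so \cite{Stacks}*{Tag 0A8H} applies verbatim — and checking that all the comparison maps in the zig-zag respect the multiplicative structure. Neither step requires any computation: the signs in \eqref{eq:DGLAHom_composition} were already fixed to make composition a strict morphism of complexes with no correction terms, so functoriality of $-\circ-$ gives the DG-algebra compatibility of every comparison map for free. I would therefore present the proof of the Claim as a two-sentence argument: locally freeness of the $E^i$ makes $\HOM(E^i,-)$ exact so the naive complex computes the derived one with no further resolution, and the composition \eqref{eq:DGLAHom_composition} is by construction a strict map of complexes compatible with every such comparison, hence transports the standard $R\HOM$ DG-algebra structure; the reader is pointed to \cite{Stacks}*{Tag 0A8H} and \cite{LS06}*{Appendix A} for the details.
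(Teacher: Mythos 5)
Your proposal is correct and takes essentially the same approach as the paper, whose entire proof is the one-line observation that local freeness of the terms of $E$ makes the naive Hom-complex compute $R\HOM(E,E)$ with its composition product; you have simply filled in the standard details (exactness of $\HOM(E^i,-)$, strict compatibility of pointwise composition, and the comparison zig-zag with the injective model).
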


In particular, we get that the global sections complex $\Gamma(X, \HOM(E, E))$ carries a differential graded algebra structure, and hence a DGLA by taking the commutator. We point out that for $X$ affine this computes $R\Hom(E, E)$, endowing it with a DGLA structure.

\subsection{Semicosimplicial DGLA} \label{subsec:semiDGLA}

\begin{defin}[cf. \cite{Iac10}*{p. 94}]
A \emph{semicosimplicial DGLA} $L^\Delta$ is the data of (i) a collection of DGLA $L_i$ for $i \geq 0$, and (ii) morphisms $\partial_k\colon L_{i-1} \to L_i$ for $k =0, \dots, i$, satisfying the compatibility condition $\partial_\ell \partial_k = \partial_{k+1} \partial_\ell$ for any $\ell \leq k$. 
\end{defin}

Given a semicosimplicial DGLA $L^\Delta$, we can assemble a double complex using the differentials $\partial = \partial_0 - \partial_1 + \dots$ and $d$. The associated total complex is known as the \emph{totalization} $C(L^\Delta)$. In general however, the totalization $L^\Delta$ complex does not carry a natural Lie bracket. Instead, the \emph{Thom--Whitney complex} $\Tot_{TW}(L^\Delta)$ (which we will define momentarily) is a quasi-isomorphic replacement carrying a DGLA structure. We follow \cite{Man22}*{\textsection 7}.

Denote by $\Omega_n = \C[t_0, \dots, t_n, dt_0, \dots, dt_n]/(1-\sum_i t_i, \sum dt_i)$ the dg algebra of differential forms on the $n$-simplex. For each $0 \leq k \leq n$, the face maps $\delta_k \colon (t_0, \dots, t_{n-1}) \to (t_0, \dots, t_{k-1}, 0, t_k \dots, t_{n-1})$ induce maps $\delta_k^\ast\colon \Omega_n \to \Omega_{n-1}$.

\begin{defin}[\cite{Man22}*{7.4.4}]
Let $L^\Delta$ be a semicosimplicial DGLA. The \emph{Thom--Whitney totalization} $\Tot_{TW}(L^\Delta)$ is the sub-DGLA of $\prod_n \Omega_n \otimes L_n$ whose $k$th piece is given by
\[ \left\{ (x_n) \in \prod_{n \geq 0} \bigoplus_{p+q=k} \Omega_n^p \otimes L_n^q : (\delta_i^\ast \otimes \id)x_n = (\id \otimes \partial_i) x_{n-1}, \forall \ 0 \leq i \leq n \right\}. \]
\end{defin}

\begin{lemma}[\cite{Man22}*{Theorem 7.4.5, 7}]
\begin{enumerate}
\item The integration maps $\Omega_n^n \to \C$ induce a quasi-isomorphism $\Tot_{TW}(L^\Delta) \to C(L^\Delta)$.
\item The assignment $L^\Delta \mapsto \Tot_{TW}(L^\Delta)$ is functorial. 
\item Let $f\colon L^\Delta \to M^\Delta$, $g\colon M^\Delta \to N^\Delta$ be two maps of semicosimplicial DGLA. Assume that for any $n \geq 0$, the maps $f$ and $g$ fit into a short exact sequence
\[ 0 \to L_n \to M_n \to N_n \to 0. \]
Then, we have a short exact sequence $0 \to \Tot_{TW}(L^\Delta) \to \Tot_{TW}(M^\Delta) \to \Tot_{TW}(N^\Delta) \to 0$. 
\end{enumerate}
\end{lemma}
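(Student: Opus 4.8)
The plan is to extract a comparison morphism from the integration maps for part~(1), to read functoriality off the construction for part~(2), and to deduce the short exact sequence in part~(3) from the flatness and extendability of the polynomial de Rham algebra $\Omega_\bullet$; throughout, the hard inputs are the classical facts about $\Omega_\bullet$ recorded by Dupont, which I would cite from \cite{Man22}*{\textsection 7} rather than reprove.

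For part~(1), the first step is to assemble the maps $\int_{\Delta^n}\colon\Omega_n^n\to\C$ into a morphism of complexes $I\colon\Tot_{TW}(L^\Delta)\to C(L^\Delta)$: an element of total degree $k$ has components $x_n\in\bigoplus_{p+q=k}\Omega_n^p\otimes L_n^q$, and $I$ sends it to the tuple $\bigl(\int_{\Delta^n}\otimes\id\bigr)\bigl(x_n^{(n,k-n)}\bigr)\in\prod_n L_n^{k-n}=C(L^\Delta)^k$, where $x_n^{(n,k-n)}$ is the component of de Rham degree $n$. That $I$ is a chain map is Stokes' theorem: writing $\partial\Delta^n$ as the signed sum of the codimension-one faces $\delta_i(\Delta^{n-1})$ and invoking the matching relations $(\delta_i^\ast\otimes\id)x_n=(\id\otimes\partial_i)x_{n-1}$, the de Rham part of the differential integrates to the semicosimplicial differential $\partial=\sum_i(-1)^i\partial_i$, while the part coming from $d_{L_n}$ integrates to the internal differential of $C(L^\Delta)$. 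The second step is that $I$ is a quasi-isomorphism; forgetting the bracket, this is Dupont's theorem, whose content is the polynomial Poincaré lemma — the acyclicity of each $\Omega_n$ relative to a vertex, equivalently the acyclicity except in top degree of the complex of polynomial forms on $\Delta^n$ vanishing on $\partial\Delta^n$ — together with Dupont's explicit contracting homotopy, which upgrades this to a functorial deformation retraction of $\Tot_{TW}(L^\Delta)$ onto $C(L^\Delta)$. I would cite \cite{Man22}*{\textsection 7} for this.

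Part~(2) is then formal: a morphism $\phi\colon L^\Delta\to M^\Delta$ is a family of DGLA maps $\phi_n\colon L_n\to M_n$ commuting with the $\partial_k$, so the maps $\id_{\Omega_n}\otimes\phi_n$ are DGLA morphisms commuting with the face maps $\delta_i^\ast\otimes\id$; hence $\prod_n(\id\otimes\phi_n)$ carries the matching sub-DGLA $\Tot_{TW}(L^\Delta)$ into $\Tot_{TW}(M^\Delta)$, and this assignment respects composition and identities. For part~(3), I would show that $\Tot_{TW}$ applied to $0\to L^\Delta\to M^\Delta\to N^\Delta\to 0$ stays exact. Left exactness and exactness in the middle are automatic: $\Tot_{TW}(-)$ is a limit — the equalizer of two maps between products of the functors $\Omega_n\otimes(-)_n$ — limits are left exact, and each $\Omega_n\otimes(-)$ is exact because $\Omega_n$ is free, hence flat, over $\C$. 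The one point requiring work is surjectivity of $\Tot_{TW}(M^\Delta)\to\Tot_{TW}(N^\Delta)$. I would prove it by lifting a given $(y_n)\in\Tot_{TW}(N^\Delta)$ to $(x_n)\in\Tot_{TW}(M^\Delta)$ by induction on simplicial degree: given compatible lifts $x_0,\dots,x_{n-1}$, the matching relations prescribe $x_n$ on the boundary $\partial\Delta^n$ as a consistent tuple of face restrictions; since $\Omega_\bullet$ is \emph{extendable} — the restriction $\Omega_n\to\Omega(\partial\Delta^n)$ is surjective — and $M_n\to N_n$ is surjective with the boundary-vanishing forms flat over $\C$, one first extends the prescribed boundary datum to some $x_n'\in\Omega_n\otimes M_n$, then subtracts a form vanishing on $\partial\Delta^n$ that lifts the discrepancy between the image of $x_n'$ and $y_n$; the result is a lift $x_n$ still satisfying the matching relations. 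This is the statement that $\Omega_\bullet$ is Reedy fibrant, so that $\Tot_{TW}$ is exact.

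The step I expect to be the genuine obstacle is the quasi-isomorphism in part~(1): the polynomial Poincaré lemma and Dupont's homotopy are the only non-formal inputs, and the rest is bookkeeping once freeness and extendability of $\Omega_\bullet$ are recorded. Accordingly, in the write-up I would set up the comparison map $I$ and check that it is a chain map explicitly, but cite \cite{Man22}*{\textsection 7} (following Dupont) for the acyclicity and homotopy statements on which the quasi-isomorphism rests.
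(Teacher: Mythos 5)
Your proposal is correct: the integration map, the Stokes-theorem verification that it is a chain map, Dupont's theorem for the quasi-isomorphism, and extendability of $\Omega_\bullet$ for the surjectivity in part (3) are exactly the ingredients of the standard proof. The paper itself gives no argument here — it simply cites \cite{Man22}*{Theorem 7.4.5} — so your sketch, which defers the same nontrivial inputs (the polynomial Poincar\'e lemma and Dupont's contracting homotopy) to that reference while filling in the formal bookkeeping, is entirely consistent with the paper's treatment.
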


Given a semicosimplicial DGLA $L^\Delta$, we denote by $\Def_{L^\Delta}$ the deformation functor associated to DGLA $\Tot_{TW}(L^\Delta)$.

\begin{remark}
Note that in the literature one can find a deformation functor directly associated to $L^\Delta$, cf. \cite{Iac10}*{\textsection 2.1}. If all the cohomology groups $H^i(L_k)$ vanish for $k<0$, these agree with the deformation functors of $\Tot_{TW}(L^\Delta)$, e.g. by \cite{Iac10}*{2.12}. This will not be the case in our situation, so extra care must be taken. 
\end{remark}

The main example to keep in mind is the following. Let $X$ be a smooth, projective variety, and let $E \in \DC^b(X)$ be a universally gluable object\footnote{This is automatic if $E$ lies in the heart of a bounded t-structure on $\DC^b(X)$, cf. \cite{Lie06}*{Proposition 2.1.9}.}. Up to replacing $E$ with a quasi-isomorphic complex, we may assume that $E$ is given by a finite complex of locally free sheaves. Moreover, we can arrange it so that this complex is $\Aut(E)$-equivariant, e.g. by \cite{CPZ24}*{Lemma 2.2}. 

Pick $\Ucover=\{U_1, \dots, U_N\}$ a finite affine open cover such that each $E^i$ is trivial on $U_j$. For each set of indices $I=\{i_0< \dots < i_p\}$, we have that $\Gamma(U_I, \HOM(E, E))$ is a DGLA. Set $L_p^n = \bigoplus_{i_0 < \dots < i_p} \bigoplus_{t-s=n} \Gamma(U_{i_0 \dots i_p}, \HOM(E^s, E^t))$. Given an element $f \in L^n_p$, we denote its components by $f_{i_0, \dots, i_p}^{s, t}$. We take the differential $d^i(f) = d^t \circ f -(-1)^n f \circ d^{s-1}$, and Lie bracket
\begin{equation} \label{eq:semiDGLA_commutator}
[f, g]_{i_0, \dots, i_p}^{s, s+m+n} = f_{i_0 \dots i_p}^{s+n, s+m+n} \circ g_{i_0 \dots i_p}^{s, s+n} - (-1)^{mn}g_{i_0 \dots i_p}^{s+m, s+m+n} \circ f_{i_0 \dots i_p}^{s, s+m}
\end{equation}
for $f \in L_p^m$ and $g \in L_p^n$. We assemble a semicosimplicial DGLA $L^\Delta$ with the maps $\partial_k(f)_{i_0 \dots i_n}^{s, t} = f_{i_0 \dots \hat{i}_k \dots i_n}^{s, t}$.

\begin{lemma} \label{lemma:semiDGLA_defiso}
Under the previous assumptions, we have an $\Aut(E)$-equivariant isomorphism $\Def_E \cong \Def_{L^\Delta}$. 
\end{lemma}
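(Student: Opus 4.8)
The plan is to identify both $\Def_E$ and $\Def_{L^\Delta}$ with a common deformation functor built from the \v{C}ech–Thom–Whitney model of $R\HOM(E,E)$, and to do so $\Aut(E)$-equivariantly. First I would recall the standard fact that $\Def_E$ (the deformation functor of the object $E\in\DC^b(X)$ in the sense of Lieblich, i.e. the functor sending an Artin ring $A$ to the set of families over $\Spec A\times X$ restricting to $E$) is governed by the DGLA $R\Hom(E,E)$ with the commutator bracket: concretely, $\Def_E\cong\Def_{R\Hom(E,E)}$. This is the theorem one invokes from the literature on deformations via DGLAs (the version for perfect complexes; since $E$ is universally gluable, the gluable-perfect deformation functor is the one computed by the full $R\Hom$). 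I would state this as a black box with a reference, being careful that the isomorphism is canonical enough to be $\Aut(E)$-equivariant — which it is, since $\Aut(E)$ acts on $R\Hom(E,E)$ by conjugation and the identification with $\Def_E$ is natural in $E$.

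Next I would show that $\Tot_{TW}(L^\Delta)$ is a DGLA model for $R\Hom(E,E)$. Since $\Ucover=\{U_1,\dots,U_N\}$ is an affine cover trivializing each $E^i$, and $\HOM(E,E)$ is a bounded complex of locally free (hence quasi-coherent) sheaves on the quasi-separated scheme $X$, higher cohomology on each $U_I$ vanishes, so $\Gamma(U_I,\HOM(E,E))$ computes $R\Gamma(U_I,\HOM(E,E))=R\Hom_{U_I}(E|_{U_I},E|_{U_I})$. The semicosimplicial DGLA $L^\Delta$ with $L_p=\bigoplus_{|I|=p+1}\Gamma(U_I,\HOM(E,E))$ and the face maps $\partial_k$ given by the \v{C}ech coface maps has totalization $C(L^\Delta)$ equal to the \v{C}ech total complex $\check C(\Ucover,\HOM(E,E))$, which computes $R\Gamma(X,\HOM(E,E))=R\Hom(E,E)$. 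By Lemma (Manetti, 7.4.5) the integration maps give a quasi-isomorphism $\Tot_{TW}(L^\Delta)\xrightarrow{\sim}C(L^\Delta)$, and this is a quasi-isomorphism of DGLAs on the Thom–Whitney side (the bracket on $\HOM(E,E)$ coming from composition is exactly the bracket \eqref{eq:semiDGLA_commutator} used to define $L^\Delta$). Hence $\Def_{L^\Delta}:=\Def_{\Tot_{TW}(L^\Delta)}\cong\Def_{C(L^\Delta)}\cong\Def_{R\Hom(E,E)}\cong\Def_E$, using that quasi-isomorphic DGLAs have isomorphic deformation functors.

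For the equivariance, the point is that every step can be arranged $\Aut(E)$-equivariantly. By the cited result of Chen–Pertusi–Zhang (\cite{CPZ24}*{Lemma 2.2}), the complex of locally free sheaves representing $E$ can be chosen $\Aut(E)$-equivariant; one then chooses the affine cover $\Ucover$ to be $\Aut(E)$-invariant (this is automatic if one works with an affine cover pulled back along an equivariant map, or one simply refines — any cover can be refined to an invariant one since $\Aut(E)$ is an algebraic group acting on $X$; alternatively $\Aut(E)$ acts only on the sheaves, not on $X$, in which case no refinement is needed). Then $\Aut(E)$ acts on each $\Gamma(U_I,\HOM(E,E))$ by conjugation, commuting with the face maps $\partial_k$, hence acts on $L^\Delta$ by automorphisms of semicosimplicial DGLAs; by functoriality of $\Tot_{TW}$ (Lemma, Manetti 7.4.5(2)) it acts on $\Tot_{TW}(L^\Delta)$ and on $C(L^\Delta)$ compatibly with the integration quasi-isomorphism, and the quasi-isomorphism $C(L^\Delta)\simeq R\Hom(E,E)$ is likewise equivariant. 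Thus all the induced isomorphisms of deformation functors are $\Aut(E)$-equivariant, and composing them gives the desired equivariant isomorphism $\Def_E\cong\Def_{L^\Delta}$.

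The main obstacle I anticipate is not any single computation but the bookkeeping around the fact that $H^i(L_p)\neq 0$ for $i<0$ — that is, the individual DGLAs $\Gamma(U_I,\HOM(E,E))$ have cohomology in negative degrees, so one genuinely cannot use the simpler Čech–style deformation functor of $L^\Delta$ in the sense of \cite{Iac10} and must work with the Thom–Whitney totalization throughout; the proof must be careful that the bracket \eqref{eq:semiDGLA_commutator} is the correct one making $\Tot_{TW}(L^\Delta)$ quasi-isomorphic \emph{as a DGLA} (not just as a complex) to $R\Hom(E,E)$, and that the equivalence of deformation functors under DGLA quasi-isomorphism is applied to honest quasi-isomorphisms of DGLAs. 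Verifying the sign conventions in \eqref{eq:semiDGLA_commutator} match those in the Hom-complex composition \eqref{eq:DGLAHom_composition} after passing to the commutator is the kind of routine-but-delicate check that I would relegate to a reference to \cite{Man22}*{\textsection 7} and \cite{Stacks}*{Tag 0A8H}.
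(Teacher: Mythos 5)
Your overall strategy --- exhibit $\Tot_{TW}(L^\Delta)$ as a model for $R\Hom(E,E)$ and transport the deformation functor --- has the right shape, and you correctly flag the danger point; but your proposed chain of isomorphisms does not actually get across it. The middle term $\Def_{C(L^\Delta)}$ in your chain is not defined: as the paper notes before introducing the Thom--Whitney construction, the totalization $C(L^\Delta)$ carries no natural Lie bracket, so the integration map $\Tot_{TW}(L^\Delta)\to C(L^\Delta)$ is only a quasi-isomorphism of complexes, and ``quasi-isomorphic DGLAs have isomorphic deformation functors'' cannot be invoked for it. Likewise, the black box ``$\Def_E\cong\Def_{R\Hom(E,E)}$'' requires fixing a \emph{specific} DGLA model of $R\Hom(E,E)$ together with an honest zig-zag of DGLA quasi-isomorphisms connecting that model to $\Tot_{TW}(L^\Delta)$; matching cohomology, or even the underlying complexes up to quasi-isomorphism, is not enough to compare deformation functors. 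This is precisely the step your write-up relegates to a ``routine-but-delicate check,'' but it is the substance of the lemma.

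The paper closes this gap by routing through the Dolbeault DGLA $M^n=\bigoplus_{p+q=n}\bigoplus_{t-s=p}A^{0,q}(\HOM(E^s,E^t))$: by \cite{CPZ24}*{Lemma 2.4} one already knows $\Def_M\cong\Def_E$ $\Aut(E)$-equivariantly, and there is an honest map of semicosimplicial DGLAs $L^\Delta\to M$ (with $M$ viewed in cosimplicial degree zero) inducing a quasi-isomorphism on total complexes, hence a genuine DGLA quasi-isomorphism on Thom--Whitney totalizations, hence $\Def_{L^\Delta}\cong\Def_M$ by \cite{Man22}*{Theorem 6.6.2}. If you want to keep a purely \v{C}ech route you would have to manufacture such a zig-zag yourself (e.g.\ through a Godement or injective resolution receiving a DGLA map from $L^\Delta$); some intermediary DGLA that both receives an actual map from $L^\Delta$ and is already known to compute $\Def_E$ equivariantly is unavoidable. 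Your treatment of equivariance is otherwise fine --- $\Aut(E)$ acts on the equivariant resolution of $E$ and not on $X$, so no refinement of $\Ucover$ is needed --- and your observation that the negative-degree cohomology of the $L_p$ forces the Thom--Whitney (rather than Iacono-style) deformation functor is correct and matches the paper's own remark.
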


\begin{proof}
We will use extensively the discussion from \cite{CPZ24}*{pp. 767--8}. We consider the Dolbeault DGLA $M^n = \bigoplus_{p+q=n}\bigoplus_{t-s=p} A^{0,q}(\HOM(E^r, E^s))$. By \cite{CPZ24}*{Lemma 2.4}, the deformation functor associated to $M$ is isomorphic to $\Def_E$, and this isomorphism is $\Aut(E)$-equivariant. Thus, it suffices to construct an isomorphism between $\Def_M$ and $\Def_{L^\Delta}$. 

To do so, note that there is a natural map $L^\Delta \to M$ of semicosimplicial DGLA, where we identify $M$ with a semicosimplicial DGLA in degree zero. The induced map at the level of total complexes is a quasi-isomorphism, and thus the map on the Thom--Whitney complexes is a quasi-isomorphism as well. The equivalence between the deformation functors is now direct, cf. \cite{Man22}*{Theorem 6.6.2}.
\end{proof}

\subsection{Finding hulls} \label{subsec:versal}

Let $L$ be a DGLA over $\C$ with $H^k(L_i)=0$ for $k<0$, and assume that $H^1(L)$ is finite dimensional. By our discussion in Subsection \ref{subsec:DGLA}, it turns out that the tangent space of $\Def_L$ is finite dimensional as well. In other words, the functor $\Def_L$ satisfies Schlessinger's (H3) condition (see \cite{Har10}*{p. 113}). One verifies that the conditions (H0)--(H2) hold as well, and so $\Def_L$ admits a hull by Schlessinger's criterion. This is, there exists a complete local Noetherian $\C$-algebra $R$ and an étale map $h_R \to \Def_L$.

The construction of $R$ is not canonical, as it depends on various choices. Let us briefly recall how this can be handled via the obstruction map (cf. \citelist{\cite{Art76}*{\textsection 7} \cite{Har10}*{\textsection 16}}). We let $D\colon \Art \to \Set$ be a functor of Artin rings satisfying conditions (H0)--(H3), endowed with an obstruction theory with values in $V$. 

First, let $s_1, \dots, s_r$ be a basis of the dual of $D(\C[\epsilon]/\epsilon^2)$. We set $S=\C[[s_1, \dots, s_r]]$ and $\m_S = (s_1, \dots, s_r)$ its maximal ideal. We will inductively construct ideals $J_q \subseteq \m_S^{q+1}$ and elements $\xi_q \in D(S/J_q)$ forming a formal family. These will give us a map $h_R \to D$, which will turn out to be étale.

To do so, start by taking $J_1 = \m_S^2$ and $\xi_1$ the canonical element. Inductively, assume that we have constructed $(J_q, \xi_q)$, and consider the small extension $0 \to J_q/\m J_q \to S/\m J_q \to S/J_q \to 0$. The obstruction map gives us a class $ob(\xi_q) \in V \otimes_\C J_q/\m J_q$. Let $J_q/J_{q+1}$ be the largest quotient of $J_q/\m J_q$ such that the image of $ob(\xi_q)$ vanishes. This defines an ideal $\m J_q \subseteq J_{q+1} \subseteq J_q$. We get the diagram:
\[ \begin{tikzcd} 0 \arrow[r] & J_q/\m_S J_q \arrow[r] \arrow[d] & S/\m J_q \arrow[r] \arrow[d] & S/J_q \arrow[r] \arrow[d] & 0 \\ 0 \arrow[r] & J_q/J_{q+1} \arrow[r] & S/J_{q+1} \arrow[r] & S/J_q \arrow[r] & 0. \end{tikzcd} \]
The vanishing of the obstruction guarantees that $\xi_q$ lifts to $D(S/J_{q+1})$, and that $J_{q+1}$ is minimal with this property among ideals between $\m J_q$ and $J_q$. 

We continue inductively, and let $J = \bigcap_q J_q$. The quotient $R=S/J = \varprojlim S/J_q$ is the required hull; the elements $\xi_q$ define a map $h_R \to D$. By construction, we get that $J_q = J + \m_S^{q+1}$.

\begin{prop} \label{prop:versal_stopcriterion}
Let $A = \C[[s_1, \dots, s_r]]/I$ be a complete local Noetherian $\C$-algebra with $I \subset \m_S^2$. Suppose that there exists a morphism $h_A \to D$ inducing an isomorphism in tangent spaces, and let $\phi\colon R \to A$ be a lift to $A$.
\begin{enumerate}
\item The map $\phi$ is of the form $\phi=\phi_1 + \phi_2 + \dots$, where $\phi_i$ is a sum of monomials of degree $i$, and $\phi_1$ is a linear isomorphism.
\item Assume that there exists $d \geq 2$ such that $\m_S^d \cap I \subseteq \m_S I$, and such that the natural map $J+\m_S^d \to I+\m_S^d$ is an isomorphism. Then we get that $I \to J$ is an isomorphism; in particular, $R\cong A$. 
\end{enumerate}
\end{prop}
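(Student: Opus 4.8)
The plan is to obtain the map $\phi$ from the smoothness of the hull map $h_R \to D$, to read off from the hypothesis on tangent spaces that its linear part is invertible — so that $\phi$ comes from an automorphism of $S$ — and then to identify the two defining ideals by one application of Nakayama's lemma. Write $S=\C[[s_1,\dots,s_r]]$, $R=S/J$ with $J\subseteq\m_S^2$, and $A=S/I$ with $I\subseteq\m_S^2$; for a functor $F$ of Artin rings, let $T_F=F(\C[\epsilon]/\epsilon^2)$ be its tangent space.

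\emph{Part (1).} Since $R$ is a hull of $D$, the morphism $h_R\to D$ is smooth, and this is precisely what lets one lift the given $h_A\to D$ to a local $\C$-algebra homomorphism $\phi\colon R\to A$: one builds compatible elements of $h_R(A/\m_A^n)$ by induction on $n$, at each step lifting along the surjection $A/\m_A^{n+1}\twoheadrightarrow A/\m_A^n$ by smoothness, and passes to the limit, using $\varprojlim_n \Hom(R,A/\m_A^n)=\Hom(R,A)$ as $A$ is complete. This is the lift in the statement. Choosing preimages in $S$ of the $\phi$-images of $s_1,\dots,s_r$ yields a lift $\tilde\phi\colon S\to S$, and collecting terms of equal degree in the power series expansions of the coordinate images gives $\tilde\phi=\tilde\phi_1+\tilde\phi_2+\cdots$, the asserted decomposition. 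Because $I,J\subseteq\m_S^2$, the natural maps $\m_S/\m_S^2\to\m_R/\m_R^2$ and $\m_S/\m_S^2\to\m_A/\m_A^2$ are isomorphisms, and under these $\tilde\phi_1$ is the cotangent map of $\phi$; its transpose is the map $T_A\to T_R$ induced by $\phi$. Composing the latter with the isomorphism $T_R\xrightarrow{\sim}T_D$ (part of $R$ being a hull) recovers the map $T_A\to T_D$ of $h_A\to D$, an isomorphism by hypothesis. Hence $\tilde\phi_1$ is a linear isomorphism, so $\tilde\phi$ is an automorphism of $S$ — a continuous endomorphism of a power series ring with invertible linear part admits an inverse, built degree by degree.

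\emph{Part (2).} Changing coordinates on $S$ by the automorphism $\tilde\phi$ of part (1), we may assume $\tilde\phi=\id$; then $\phi$ is induced by $\id_S$, so $J\subseteq I$ and $\phi\colon R=S/J\to S/I=A$ is the canonical surjection. This normalization does not affect the two hypotheses, since an automorphism of $S$ preserves $\m_S$ and every power $\m_S^d$; so assume $\m_S^d\cap I\subseteq\m_S I$ and $J+\m_S^d=I+\m_S^d$. Given $x\in I$, the second hypothesis gives $x=j+m$ with $j\in J$ and $m\in\m_S^d$; then $m=x-j\in I$, so $m\in I\cap\m_S^d\subseteq\m_S I$, whence $x\in J+\m_S I$. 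Thus $I=J+\m_S I$, i.e. $I/J=\m_S\cdot(I/J)$; since $S$ is Noetherian, $I$, and hence $I/J$, is a finitely generated $S$-module, so Nakayama's lemma forces $I/J=0$. Therefore $I=J$, the map $\phi$ is an isomorphism, and $R\cong A$.

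\emph{Main obstacle.} The delicate point is the first step: one must make sure $\phi$ exists as an honest local ring homomorphism $R\to A$, and not merely as a compatible system on the Artinian quotients of $A$ — this is where completeness of $A$ is used — and that its cotangent map is exactly the one controlled by the hypothesis on tangent spaces. After that the argument is formal: the normalization $\tilde\phi=\id$ and a single Nakayama step; in particular $d\geq 2$ plays no role in the argument itself, only in keeping the hypotheses non-vacuous in applications.
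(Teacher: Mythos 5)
Your proof is correct and follows essentially the same route as the paper: lift $\phi$ to an automorphism of $S$ whose linear part is invertible because both $h_R\to D$ and $h_A\to D$ induce isomorphisms on tangent spaces, then combine $J+\m_S^d=I+\m_S^d$ with $\m_S^d\cap I\subseteq \m_S I$ to get $I=J+\m_S I$ and conclude by Nakayama. You spell out part (1) (existence of the lift via smoothness of the hull and the identification of the linear part with the tangent map) in more detail than the paper, which simply calls it clear, but the content is the same.
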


This lemma is already interesting for $d=2$, for which $I=0$. Informally, the idea is that the hull is a quotient of $\C[[s_1, \dots, s_r]]$. But $A$ already is a ``maximal family'', so there is nothing else to be checked. A version of this result for $d=3$ was implicitly used in \citelist{\cite{Xia18}*{Corollary 4.22} \cite{TX22}*{pp. 405--6}}.

In practice, Proposition \ref{prop:versal_stopcriterion} will give us a way to verify whether a family $h_A \to F$ is versal. We first represent $A= \C[[s_1, \dots, s_r]]/(f_1, \dots, f_\ell)$, and pick some $d$ such that $\m_S^d + I \subseteq \m I$. Then, the remaining verification is done by computing obstructions up to this degree. The key point is that we have a bound on when to stop computing these obstructions.

\begin{proof}
The first part is clear as both $h_R \to D$ and $h_A \to D$ are isomorphisms in tangent spaces. For the second one, we have that $\phi$ lifts to an isomorphism $\psi\colon \C[[s_1, \dots, s_r]] \to \C[[s_1, \dots, s_r]]$, fitting into the diagram:
\[ \begin{tikzcd} J \arrow[r, hook] \arrow[d] & {\C[[s_1, \dots, s_r]]} \arrow[r] \arrow[d, "\psi"] & R \arrow[d, "\phi"] \\ I \arrow[r, hook] & {\C[[s_1, \dots, s_r]]} \arrow[r] & A. \end{tikzcd} \]
Note that $R \to A$ is surjective, and so $J \to I$ is injective. By assumption, we have that $J+\m_S^d = I + \m_S^d$ is an isomorphism, where we identify $J$ as a submodule of $I$. This way, we can find $g_i \in J$ such that $f_i -g_i \in \m_S^d\cap I$. In other words, we have shown that $I=J + \m_S^d \cap I$. At last, by assumption this yields that $I=J+\m I$. By Nakayama this gives us $I=J$, as claimed. 
\end{proof}

\begin{cor} \label{cor:versal_quadratic}
In the notation of Proposition \ref{prop:versal_stopcriterion}, assume that $I\cap \m_S^3 \subseteq \m_S I$, and that $I+\m_S^3$ equals $\m_S^3 + \ker \kappa_2$, where $\kappa_2$ is the primary obstruction (cf. Remark \ref{remark:DGLA_primary}). Then $R \cong T$ holds.
\end{cor}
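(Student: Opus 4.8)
The plan is to deduce Corollary~\ref{cor:versal_quadratic} from Proposition~\ref{prop:versal_stopcriterion}(2) applied with $d = 3$. With this choice the first hypothesis of that part, namely $\m_S^d \cap I \subseteq \m_S I$, is exactly the standing assumption $I \cap \m_S^3 \subseteq \m_S I$. So the only thing left to check is that the natural map $J + \m_S^3 \to I + \m_S^3$ is an isomorphism; once this is done, Proposition~\ref{prop:versal_stopcriterion}(2) gives $I \cong J$, and hence $R \cong A$.

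First I would pin down $J + \m_S^3$. By the inductive construction of the hull recalled just before Proposition~\ref{prop:versal_stopcriterion} one has $J_q = J + \m_S^{q+1}$, so $J + \m_S^3 = J_2$. Now $J_2$ is obtained from $J_1 = \m_S^2$ and the canonical first-order element $\xi_1 \in D(S/\m_S^2)$ by declaring $J_2/\m_S^3 \subseteq \m_S^2/\m_S^3$ to be the smallest subspace through which the obstruction $ob(\xi_1) \in V \otimes \m_S^2/\m_S^3$ factors (equivalently, the largest quotient of $\m_S^2/\m_S^3$ annihilating it; the two descriptions agree since $V$, being a $\C$-vector space, is flat). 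The key input is Remark~\ref{remark:DGLA_primary}: under the canonical identifications $\m_S/\m_S^2 \cong D(\C[\epsilon]/\epsilon^2)^{\ast}$ and $\m_S^2/\m_S^3 \cong \mathrm{Sym}^2\bigl( D(\C[\epsilon]/\epsilon^2)^{\ast} \bigr)$, the obstruction of the universal first-order element $\xi_1$ along the small extension $0 \to \m_S^2/\m_S^3 \to S/\m_S^3 \to S/\m_S^2 \to 0$ is the primary obstruction $\kappa_2$; this follows by naturality of the obstruction theory along the maps $\C[x,y]/(x,y)^2 \to S/\m_S^2$. Consequently $J + \m_S^3 = \m_S^3 + \ker\kappa_2$ in the coordinates $s_1, \dots, s_r$, and combining with the hypothesis $I + \m_S^3 = \m_S^3 + \ker\kappa_2$ we obtain $J + \m_S^3 = I + \m_S^3$ as ideals of $S$.

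It then remains to see that the natural comparison map between these equal ideals is an isomorphism. For this one reduces to the case $\phi_1 = \mathrm{id}$: a linear reparametrization of $\C[[s_1, \dots, s_r]]$ transforms $I$, the quadratic part of $\kappa_2$, and the morphism $\phi$ compatibly (again by naturality of $\kappa_2$), so the hypotheses are preserved and we may assume $\phi_1$ is the identity. Then the automorphism $\psi$ of $\C[[s_1, \dots, s_r]]$ lifting $\phi$ from the proof of Proposition~\ref{prop:versal_stopcriterion} is the identity modulo $\m_S^2$; hence it preserves $\m_S^3$, satisfies $\psi(x) - x \in \m_S^3$ for every $x \in \m_S^2$, and therefore restricts to an isomorphism $J + \m_S^3 \xrightarrow{\ \sim\ } \psi(J) + \m_S^3 = I + \m_S^3$, which is the natural map in question. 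Applying Proposition~\ref{prop:versal_stopcriterion}(2) finishes the proof.

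The step I expect to be the main obstacle is the middle one: carefully matching the ``largest annihilating quotient'' recipe of the hull construction with the ideal generated by the quadratic components of $\kappa_2$, and checking that all the identifications involved ($\m_S/\m_S^2$ with the cotangent space, $\m_S^2/\m_S^3$ with its symmetric square, and $ob(\xi_1)$ with $\kappa_2$) are the canonical ones, so that the coordinate conventions in the hypothesis are respected. The remaining points — the flatness remark, the reduction to $\phi_1 = \mathrm{id}$, and ``equality of ideals'' versus ``the natural map is an isomorphism'' — are routine and are handled exactly as in the proof of Proposition~\ref{prop:versal_stopcriterion}.
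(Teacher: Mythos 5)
Your proposal is correct and follows essentially the same route as the paper: the paper's proof is a one-line application of Proposition \ref{prop:versal_stopcriterion}(2) with $d=3$, citing Manetti for the identification $J_2 = \m_S^3 + \ker\kappa_2$, which is exactly the identification you verify in detail via Remark \ref{remark:DGLA_primary}. The extra care you take with the reparametrization $\phi_1$ and the comparison of ideals is sound but is already absorbed into the proof of Proposition \ref{prop:versal_stopcriterion}.
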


\begin{proof}
The ideal $J_2$ is given by the kernel of the primary obstruction, cf. \cite{Man09}*{\textsection 5}.
\end{proof}

\begin{remark}
Note that in this discussion we have not kept track of the $\Aut(E)$-action. We refer to \cite{CPZ24}*{\textsection 2.3} for a discussion on how to do so by using the Kuranishi map. 
\end{remark}

\subsection{From DGLA to the moduli space} \label{subsec:DGLAmoduli}

The goal of this subsection is to relate the DGLA formalism to the local structure of the moduli space, following the discussion on \cite{CPZ24}*{\textsection 2.4} (see also \cite{AS25}*{\textsection 2.3}). To do so, let $X$ be a smooth, projective variety and let $\sigma = (Z, \A)$ be a Bridgeland stability condition on $X$. In particular, for each vector $v$ we have good moduli spaces $M_\sigma(v)$.

Let us parse this out. Given a numerical vector $v \in K^{num}(X)$, there is an open substack $\M_\sigma(v) \subset \M_{pug}(X)$ whose $S$-valued points are perfect, universally gluable complexes $E$ such that $E|_s$ is a $\sigma$-semistable for all $s \in S$. By Theorem \ref{teo:existsmoduli_main}, this stack admits a good moduli space $M_\sigma(v)$, whose closed points parametrize S-equivalence classes of $\sigma$-semistable objects with numerical vector $v$.  

This way, given a closed point $p \in M$, we let $[E] \in \M(\C)$ be the unique closed point over $p$, so that $E$ is polystable. Consider the deformation functor $\Def_E$, which admits a $\Aut(E)$-equivariant hull of the form $\C[[\Ext^1(E, E)]]/I$.

\begin{lemma}[\cite{CPZ24}*{Lemma 2.7}] \label{lemma:DGLAmoduli_gms}
Assume that $\Aut(E)$ is linearly reductive. Then we have $\hat{\O}_{M, p} \cong (\C[[\Ext^1(E, E)]]/I)^{\Aut(E)}$. 
\end{lemma}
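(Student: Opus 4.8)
\textbf{Proof plan for Lemma \ref{lemma:DGLAmoduli_gms}.}

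The plan is to pass through the étale slice / local structure theorem for algebraic stacks with good moduli spaces and then take invariants. First I would recall that since $[E] \in \M_\sigma(v)(\C)$ is polystable, its automorphism group $G = \Aut(E)$ is linearly reductive by hypothesis, and that the good moduli space map $\M_\sigma(v) \to M_\sigma(v)$ sends $[E]$ to $p$ with $[E]$ the unique closed point in the fiber. By the local structure theorem for good moduli spaces (Alper--Hall--Rydh), there is an affine scheme $\Spec A$ with a $G$-action fixing a point $a$, and a cartesian diagram of étale maps $[\Spec A / G] \to \M_\sigma(v)$, $\Spec A /\!/ G \to M_\sigma(v)$, identifying the completed local rings $\hat{\O}_{M_\sigma(v), p} \cong \widehat{(A^G)}_{\bar a} = (\hat A_a)^G$, where the last equality uses that taking $G$-invariants commutes with completion for $G$ linearly reductive (the completion is along the $G$-fixed ideal $\m_a$, and invariants are exact). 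So the content reduces to identifying $\hat A_a$, as a $G$-algebra, with the hull $\C[[\Ext^1(E,E)]]/I$ of $\Def_E$.

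Next I would identify $[\Spec A/G]$ with the deformation functor. The formal neighborhood of $[E]$ in $\M_\sigma(v)$ — equivalently the formal completion of the quotient stack $[\Spec A/G]$ at the orbit of $a$ — pro-represents the deformation functor of $E$ as a perfect complex, i.e. $\hat{\O}_{\Spec A, a}$ (with its $G$-action) is a $G$-equivariant hull for $\Def_E$. This is where I would invoke the earlier machinery: by Lemma \ref{lemma:semiDGLA_defiso} the functor $\Def_E$ is isomorphic, $\Aut(E)$-equivariantly, to $\Def_{L^\Delta}$ for the explicit semicosimplicial DGLA $L^\Delta$ built from a $\Aut(E)$-equivariant locally free model and an affine cover; and by the discussion in Subsection \ref{subsec:versal}, $\Def_{L^\Delta}$ admits a hull of the form $\C[[\Ext^1(E,E)]]/I$ — here one uses $\Ext^1(E,E) = H^1(\Tot_{TW} L^\Delta) = \Def_{L^\Delta}(\C[\epsilon]/\epsilon^2)^\vee$ (Example \ref{ex:DGLA_tangentspaces}) and the $H^{<0}(L_i)=0$ hypothesis to apply Schlessinger. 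Since $\Spec A$ is a smooth (or at least a genuine) presentation, $\hat A_a$ is \emph{a} hull, and hulls of a functor satisfying (H0)--(H3) are isomorphic; moreover, because the construction of the hull in Subsection \ref{subsec:versal} can be carried out $G$-equivariantly (choosing $s_1,\dots,s_r$ a $G$-stable basis of $\Ext^1(E,E)^\vee$ and noting the obstruction maps into the $G$-representation $H^2 = \Ext^2(E,E)$ are $G$-equivariant, so every $J_q$ is a $G$-submodule), the isomorphism $\hat A_a \cong \C[[\Ext^1(E,E)]]/I$ can be taken $G$-equivariant.

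Finally I would take $G$-invariants: combining the two displays,
\[ \hat{\O}_{M_\sigma(v), p} \cong (\hat A_a)^G \cong \left(\C[[\Ext^1(E,E)]]/I\right)^{\Aut(E)}, \]
which is the assertion. The main obstacle I anticipate is the $G$-equivariance bookkeeping rather than any single hard theorem: one must ensure simultaneously that (i) the local structure theorem yields a $G$-equivariant identification of completed local rings, (ii) the DGLA model $L^\Delta$ and the isomorphism $\Def_E \cong \Def_{L^\Delta}$ are genuinely $\Aut(E)$-equivariant (this is why the excerpt insists on an $\Aut(E)$-equivariant locally free complex, via \cite{CPZ24}*{Lemma 2.2}), and (iii) the non-canonical hull construction can be run $G$-equivariantly so that ``the'' ideal $I$ is $G$-stable and the comparison isomorphism of hulls respects $G$. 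Each ingredient is available in the literature (\cite{AHLH23}, \cite{CPZ24}*{\textsection 2.3--2.4}), so the proof is mainly a matter of citing \cite{CPZ24}*{Lemma 2.7} and assembling these compatibilities; I would keep the written proof short, deferring the equivariance details to those references.
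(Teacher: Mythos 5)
The paper gives no proof of this lemma---it is quoted directly as \cite{CPZ24}*{Lemma 2.7}---and your sketch is a correct reconstruction of the standard argument behind that citation: the Alper--Hall--Rydh local structure theorem, identification of the completed slice $\hat{A}_a$ with a $G$-equivariant hull of $\Def_E$, and commutation of invariants with completion for linearly reductive $G$. The equivariance caveats you flag (in particular, running the hull construction $G$-equivariantly so that $I$ is $G$-stable and the comparison of hulls respects $G$) are exactly the points the paper delegates to \cite{CPZ24}*{\textsection 2.3--2.4}, so nothing essential is missing.
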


Note that when $E$ is stable, the action of $\Aut(E)=\C$ is trivial, and so we get an isomorphism with $\C[[\Ext^1(E, E)]]/I$ directly (see also the discussion in \cite{Lie06}*{\textsection 4.3}). In general however, we do need to keep track of the $\Aut(E)$-action.

\section{A single curve} \label{sec:single}

In this section we will apply the technical machinery we have constructed to analyze the stability conditions arising from the contraction to a single rational curve. This has been extensively studied in \cite{Tod13} for the contraction of a $(-1)$-curve and in \cite{TX22} for the contraction of a $(-n)$-curve.

We consider the following setup. Let $S$ be a smooth, projective surface, and let $f\colon S \to T$ be a birational morphism to a normal, projective surface. We assume that $\Exc(f)=C$, where $C$ is a smooth, rational curve with self-intersection $-n$. Fix $\eta \in \Amp(T)_\Q$ and $\beta \in \NS(S)_\Q$ such that $-1<\beta.C-n/2<0$. We let $\overline{\sigma}_{\beta, f^\ast\eta}$ be the stability condition with central charge $Z_{\beta, f^\ast\eta}$, whose existence is guaranteed by Theorem \ref{teo:limits_all}. 

Using Bridgeland's deformation theorem, there is a continuous family $\sigma_\epsilon\in \Stab(S)$ of stability conditions with central charge $Z_{\beta, f^\ast\eta+\epsilon C}$, provided that $\abs{\epsilon} \ll 1$, with $\sigma_0 = \overline{\sigma}_{\beta, f^\ast\eta}$. We denote by $M_\epsilon = M_{\sigma_\epsilon}([pt])$.

Note that for small $\epsilon <0$, we have that $f^\ast\eta+\epsilon C$ is ample in $S$. This way, for small $\epsilon<0$, the stability condition $\sigma_{\epsilon}$ agrees with the Arcara--Bertram stability conditions $\sigma_{\beta, f^\ast\eta+\epsilon C}$. In particular, this gives us that $M_\epsilon \cong S$ in this case, with a universal family given by the structure sheaf of the diagonal.

\begin{prop} \label{prop:single_main}
Under the previous assumptions, we have that the $\sigma_0$-polystable objects of phase 1 and numerical vector $[pt]$ are $\O_x$ for $x \notin C$, and $\O_C \oplus \O_C(-1)[1]$. The good moduli space $M_0$ is isomorphic to $T$. 

Lastly, for $\epsilon >0$ we have that $M_\epsilon$ only parametrizes stable objects. It is isomorphic to $T$ if $n=1$, to $S$ if $n=2$, and to $S \sqcup_C \P^{n-1}$ if $n \geq 3$, where $C \subset \P^{n-1}$ is embedded as a rational normal curve. 
\end{prop}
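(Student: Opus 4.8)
\medskip

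The plan is to split the statement into three parts: (i) identifying the $\sigma_0$-polystable objects and showing $M_0 \cong T$; (ii) the wall-crossing analysis from $\epsilon < 0$ to $\epsilon > 0$; and (iii) the case distinction according to $n$.

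First, for the $\sigma_0$-polystable objects I would invoke Proposition \ref{prop:limits_ssfactors}: in the present setup $f\colon S\to T$ contracts a single smooth rational $(-n)$-curve, which is the $r=1$ case of a single ``chain'' of length one, and the condition $-1<\beta.C-n/2<0$ is exactly $k_{1,1}=0$ in the notation there. So the stable factors of $\O_x$ for $x\in C$ are $\O_C(0,\dots,0)=\O_C$ and $\O_C(-1)[1]$, giving the polystable object $\O_C\oplus\O_C(-1)[1]$, while $\O_x$ for $x\notin C$ stays stable. The fact that $M_0\cong T$ should follow because $M_0$ set-theoretically is $(S\setminus C)\sqcup\{\text{one point}\}$, and this matches $T=f(S)$ as a set; to upgrade this to an isomorphism of schemes one uses Lemma \ref{lemma:critiso_main} (or Lemma \ref{lemma:embbundle_closed}) applied to the natural map $S\to M_0$ induced by the structure sheaf of the diagonal over $S\setminus C$ together with a local computation at the singular point — the complete local ring of $M_0$ at the image of $C$ is, by Lemma \ref{lemma:DGLAmoduli_gms}, the $\Aut$-invariants of the hull of $\Def_{\O_C\oplus\O_C(-1)[1]}$, which one computes (via the DGLA models of Section \ref{sec:localstr}) to be the cyclic quotient singularity $\frac1n(1,1)$, matching the singularity of $T$.

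Next, the wall-crossing. For $\epsilon<0$ we already know $M_\epsilon\cong S$ with universal family $\O_\Delta$, and the only wall between $\epsilon<0$ and $\epsilon>0$ is the one through $\sigma_0$ corresponding to the decomposition $[pt]=[\O_C]+[\O_C(-1)[1]]$. I would verify assumptions \textbf{A1}--\textbf{A6} of Section \ref{sec:irrviaWC} in this setup: \textbf{A1} (only one wall) is clear since there is a single curve; \textbf{A2} holds because $M_u=M_{\sigma_0}([\O_C])$ and $M_w=M_{\sigma_0}([\O_C(-1)[1]])$ are single reduced points parametrizing the stable objects $\O_C$, $\O_C(-1)[1]$ (rigidity of $\O_C$ on the surface, using $C^2=-n<0$); \textbf{A3}--\textbf{A4} are immediate; the relevant $\Ext^1$ group is $\Ext^1(\O_C(-1)[1],\O_C)=\Ext^0(\O_C(-1),\O_C)=H^0(\O_C(1))\cong\C^2$ — wait, I must be careful about orientation: the extensions appearing after crossing are $0\to G\to E\to F\to 0$ with $F=\O_C(-1)[1]$ (or its shift) destabilized on the appropriate side, and the dimension of the relevant extension space is $n$ (this is where $\Ext^1(\O_C,\O_C(-1)[1])=\Ext^2(\O_C,\O_C(-1))\cong \Ext^0(\O_C(-1),\O_C\otimes\omega_S)^\vee$, or more directly a Riemann--Roch / local-to-global computation on the $(-n)$-curve, yields dimension $n$), so \textbf{A5} holds with $r=n$ and $Z=\mathrm{pt}$, giving $P=\P^{n-1}$; finally \textbf{A6} holds since $M_+\cong S$ is smooth and $P^i=P=\P^{n-1}$ is smooth. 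By the Claim and Lemma in Subsection \ref{subsec:embbundle}, $P=\P^{n-1}\hookrightarrow M_+\cong S$ is a closed embedding, and one identifies its image: the destabilized objects are precisely those $\O_x$ with $x\in C$ (these are the ones admitting the relevant sub/quotient), so set-theoretically $\P^{n-1}$ maps onto $C\subset S$ — but as a \emph{scheme} the embedding $\P^{n-1}\hookrightarrow S$ factors through... no: the point is that $P\to M_+$ is a closed embedding with image a copy of $\P^{n-1}$, and separately there is a \emph{map} $\P^{n-1}=P\to C\subset S$. The resolution is that after crossing, on the $M_-$ side the elementary modification of Subsection \ref{subsec:em} applied to the component $S$ of $M_+$ (blowing up $C\subset S$, which for a smooth curve on a smooth surface does nothing new — $\Bl_C S\cong S$) together with the new component $P=\P^{n-1}$, glued along the locus where $N_{C/S}\to\Ext^1(G,F)$ behaves, produces $M_-=S\sqcup_C\P^{n-1}$. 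Here $C$ sits in $\P^{n-1}$ as a rational normal curve because the embedding $P=\P\Ext^1(F,G)\to\ \cdot\ $ together with the identification of the fibers of $N_{C/S}$ realizes $\O_C(-1)$'s sections, i.e. $|\O_C(1)|$-style data, giving the degree-$(n-1)$ Veronese-type embedding of $\P^1=C$.

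Finally the case distinction: for $n=1$, the wall-crossing is the classical contraction of a $(-1)$-curve (Tod13) and $M_\epsilon$ for $\epsilon>0$ is the non-geometric side, which is $T=\Bl^{-1}$ of... concretely $P=\P^0=\mathrm{pt}$ and the modification replaces $C$ by a point, so $M_\epsilon\cong T$; for $n=2$, $P=\P^1$ is glued to $C=\P^1$ along an isomorphism (a $(-2)$-curve flop-type behavior), so $S\sqcup_C\P^1\cong S$ since the gluing identifies $C$ with all of $\P^1$; for $n\geq 3$, $P=\P^{n-1}$ has dimension $\geq 2 > 1=\dim C$, so the gluing is genuinely nontrivial and $M_\epsilon\cong S\sqcup_C\P^{n-1}$ with $C\hookrightarrow\P^{n-1}$ a rational normal curve. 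To conclude each isomorphism of schemes (not just of sets) I would again appeal to Lemma \ref{lemma:critiso_main}: the glued scheme $S\sqcup_C\P^{n-1}$ maps to $M_-$, it is a bijection on closed points by the wall-crossing combinatorics, it is an isomorphism on tangent spaces by the diagram \eqref{eq:em_tangentmap} and Corollary \ref{cor:embbundle_tangentmap} (checking injectivity of $N_{P^i/M_+^i}\to\Ext^1(G,F)$ via the rest of that diagram), and $M_-$ is reduced — reducedness is checked pointwise via Lemma \ref{lemma:DGLAmoduli_gms}, computing the hull $\C[[\Ext^1(E,E)]]/I$ at each point of $M_-$ using the explicit DGLA models of Section \ref{sec:localstr} and showing $I$ is radical (indeed at the generic points of the two components the moduli space is smooth, and along $C$ one gets the expected normal-crossings-type local model).

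\medskip

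The main obstacle I expect is the explicit local computation along $C$ on the $\epsilon>0$ side: one must show that the hull of the deformation functor of the relevant (non-split, or at special points split) extension $E$ is \emph{exactly} the local ring of $S\sqcup_C\P^{n-1}$ along $C\subset\P^{n-1}$, i.e. that there are no further obstructions beyond the primary one and that the two branches meet in the predicted way. This is where Corollary \ref{cor:versal_quadratic} (or Proposition \ref{prop:versal_stopcriterion} with a small $d$) must be applied carefully, computing $\Ext^1(E,E)$, $\Ext^2(E,E)$ and the primary obstruction $\kappa_2$ for $E$ an extension of $\O_C(-1)[1]$ by $\O_C$ (and for the split object $\O_C\oplus\O_C(-1)[1]$ at the special point), using the \v Cech–DGLA model for a cover of $S$ trivializing the relevant locally free resolutions of $\O_C$ and $\O_C(-1)$ on the $(-n)$-curve. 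Everything else is formal, given the machinery of Sections \ref{sec:irrviaWC}--\ref{sec:localstr} and Proposition \ref{prop:limits_ssfactors}.
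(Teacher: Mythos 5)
Your proposal is correct and follows essentially the same route as the paper: classify the $\sigma_0$-polystable objects via the semistable-factor results of \cite{Vil25} (Proposition \ref{prop:limits_ssfactors}), run the wall-crossing machinery of Section \ref{sec:irrviaWC} (checking \textbf{A1}--\textbf{A6}, producing the bundle $\P^{n-1}=\P\Ext^2(\O_C,\O_C(-1))$ and the elementary modification of $S$ along $C$, with the gluing locus identified as the rational normal curve via the jumping locus of $\dim\Ext^1(E,E)$), and then settle the scheme structure by the \v{C}ech--DGLA computations of Section \ref{sec:localstr}, taking $(\C^\ast)^2$-invariants of the hull at $\O_C\oplus\O_C(-1)[1]$ to get the $\frac{1}{n}(1,1)$-singularity for $M_0\cong T$ and checking reducedness of $M_+$ before invoking Lemma \ref{lemma:critiso_main}. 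The explicit computations you defer (the rank computation identifying the rational normal curve, and the hull computations) are exactly the ones carried out in Subsections \ref{subsec:singleirr}--\ref{subsec:singlelocal}.
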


The proof of this proposition will be the focus of the rest of this section. Compare this result to \cite{Tod13}*{Theorem 3.16} and \cite{TX22}*{\textsection 7--8}, where most of the proposition was already proven. The only part that is new is the fact that $M_0 \cong T$, which involves describing the local structure of $M_0$ at the point corresponding to the strictly semistable object.

We will divide the proof of Proposition \ref{prop:single_main} into three steps. First, we will determine the semistable objects in Subsection \ref{subsec:singless}. We will then use this to determine the irreducible components of the corresponding moduli spaces in Subsection \ref{subsec:singleirr}. Finally, we will describe the local structure in Subsection \ref{subsec:singlelocal}.

\subsection{Semistable objects} \label{subsec:singless}

To determine the semistable objects for the stability conditions $\sigma_\epsilon$ we start by looking at $\sigma_0$. To do so, we let $E$ be an object in the heart of $\sigma_0$, of phase 1 and numerical class $[pt]$. By \cite{Vil25}*{Proposition 3.13}, we have that $\sigma_0$ is an extension of objects supported on $\Exc(f)$ union finitely many points. Moreover, we have that the only objects supported on $\Exc(f)$ that are stable are $\O_C$ and $\O_C(-1)[1]$, thanks to \cite{Vil25}*{Lemma 5.8}. 

This way, we have two types of polystable objects with numerical class $[pt]$: $\O_x$ for $x \in S \setminus C$, and $\O_C \oplus \O_C(-1)[1]$. We point out that for $x \in C$, we have the short exact sequence
\begin{equation} \label{eq:singless_point}
0 \to \O_C \xrightarrow{\alpha} \O_x \xrightarrow{\beta} \O_C(-1)[1] \to 0.
\end{equation}

This gives us a complete description of the polystable objects parametrized by $M_0$. Note that only one of them is strictly semistable, namely, $\O_C \oplus \O_C(-1)[1]$. With this in mind, we can compute the walls corresponding to the collection of $\sigma_0$-polystable objects, by using Theorem \ref{teo:wc_main} (and especially \eqref{eq:wc_wallformula}).

We get a single numerical wall, given by the preimage of 
\begin{equation} \label{eq:singless_wall}
\{Z \in \Hom(\Lambda, \C) : \Re Z(\O_C) \cdot \Im Z([pt]) = \Re Z([pt]) \cdot \Im Z(\O_C) \}
\end{equation}
via the projection map $\Stab(S) \to \Hom(\Lambda, \C)$.

\begin{remark}
Note that the set from \eqref{eq:singless_wall} describes a (real) submanifold of codimension 1 inside of $\Hom(\Lambda, \C)$. One quickly verifies that the path $\{\sigma_\epsilon\}$ is transversal to the preimage of this submanifold.
\end{remark}

Using the description of Theorem \ref{teo:wc_main}, we get that for $\epsilon<0$, the $\sigma_\epsilon$-stable objects are $\{\O_x : x \in S \setminus C\}$, together with objects $E$ fitting in a triangle $\O_C \to E \to \O_C(-1)[1] \xrightarrow{\xi} \O_C[1]$ for some $\xi \neq 0$. Here $E \cong \O_x$ for some $x \in C$. As we mentioned above, the stability condition $\sigma_\epsilon$ agrees with the Arcara--Bertram stability condition $\sigma_{\beta, f^\ast\eta+\epsilon C}$. We recover the description of Proposition \ref{prop:AB_moduli} at the level of closed points.

On the other hand, for $\epsilon>0$, the $\sigma_\epsilon$-stable objects are $\{\O_x : x \in S \setminus C\}$, together with objects $E$ fitting in a triangle $\O_C(-1)[1] \to E \to \O_C \xrightarrow{\xi} \O_C(-1)[2]$ for some $\xi \neq 0$. Note here that $\Hom(\O_C, \O_C(-1)[2]) = \Ext^2(\O_C, \O_C(-1))$ is $n$-dimensional, and thus the nontrivial extensions (up to scalar) are parametrized by a projective space of dimension $n-1$.

From this discussion we get that there are three distinct moduli spaces, depending on whether $\epsilon$ is positive, zero, or negative. We will denote them by $M_+$, $M_0$, and $M_-\cong S$ respectively.

\subsection{Irreducible components} \label{subsec:singleirr}

Our next goal is to describe the irreducible components of $M_+$, using the results from Section \ref{sec:irrviaWC}. Our starting point is the fact that\footnote{Note that there is a small difference in notation: in Section \ref{sec:irrviaWC}, we denoted by $M_+$ the moduli space that was known to us, and by $M_-$ the one obtained after wall-crossing. Here $M_-$ is the moduli space that we know.} $M_- \cong S$, with universal family $\U_{M_-} \cong \O_{\Delta}$, and that there is a single wall.

Given $x \in S$, note that $\O_x$ admits a non-zero map from $\O_C$ if and only if $x \in \O_C$. To show this, note that $\O_C$ and $\O_x$ have disjoint supports if $x \notin C$. Thus, the locus $P \subset M_-$ of destabilized objects corresponds exactly to the curve $C$. 

Following Subsection \ref{subsec:em}, we blow-up the locus $P \subset M_-$ and construct an elementary modification of the family $\U_{M_-}$. In our case, the blow-up does not change the scheme $S$ (as $C \subset M_-$ is already an effective Cartier divisor). The new objects corresponding to the points of $C \subset S$ correspond to extensions parametrized by $\Ext^2(\O_C, \O_C(-1))$. Let us identify precisely which extension classes arise from this elementary modification. x

Given $x \in C$, let $\alpha$ and $\beta$ be the maps given by the triangle:
\begin{equation} \label{eq:singleirr_point}
\O_C \xrightarrow{\alpha} \O_x \xrightarrow{\beta} \O_C(-1)[1] \to \O_C[1].
\end{equation}
In this notation, we have that \eqref{eq:em_tangentmap} gives us the diagram
\begin{equation} \label{eq:singleirr_preem}
\begin{tikzcd}
0 \arrow[r] & T_{C, x} \arrow[r] \arrow[d] & T_{S, x} \arrow[r] \arrow[d, "KS", "\cong"'] & N_{C/S, x} \arrow[r] \arrow[d] & 0 \\ 
0 \arrow[r] & K_x \arrow[r] & \Ext^1(\O_x, \O_x) \arrow[r, "\beta \circ - \circ \alpha"'] & \Ext^2(\O_C, \O_C(-1)) 
\end{tikzcd}
\end{equation}
By chasing the diagram, one checks that the bottom right map has rank one. It follows that $K_x$ is one-dimensional, and so the map $N_{C/S, x} \to \Ext^2(\O_C, \O_C(-1))$ is injective. 

From Corollary \ref{cor:em_newfamily}, we get an induced map $S \to M_+$. From the discussion of Subsection \ref{subsec:singless} we get that this is injective on closed points, and a direct computation shows that it is also injective on tangent vectors.

On the other hand, note that there is a second way to produce objects on $M_+$, by taking $Q = \P \Ext^2(\O_C, \O_C(-1))$. This carries a universal extension of the form
\[ \O_C(-1)[1] \boxtimes \O(1) \to \U \to \O_C \to \ast \]
inside $\DC^b(Q \times S)$. This gives us a second map $Q \to M_+$, injective on closed points and tangent vectors. 

Moreover, we have that the union of the images of $S$ and $Q$ inside $M_+$ contains all the closed points of $M_+$. This way, it remains to determine whether the maps $S, Q \to M_+$ are closed embeddings, and how these two subschemes are glued. 

To do so, let us look more carefully at the extensions parametrized by $Q$. Given a non-zero $\xi \in \Ext^2(\O_C, \O_C(-1))$, we consider the extension
\begin{equation} \label{eq:singleirr_newext}
\O_C(-1)[1] \xrightarrow{\gamma} E \xrightarrow{\delta} \O_C \xrightarrow{\xi} \ast
\end{equation}
induced by it. Here $E$ represents a closed point $p$ of $M_+$. To compute the local structure of $M_+$ around this point, we will use that the tangent space of $M_+$ at $E$ is identified with $\Ext^1(E, E)$. We compute this group using the maps from \eqref{eq:singleirr_newext}.

\begin{claim} \label{claim:singleirr_Ext1}
Given $\xi$ as before, we have an exact sequence
\[ 0 \to \C \xi \hookrightarrow \Ext^2(\O_C, \O_C(-1)) \xrightarrow{\delta \circ - \circ \gamma} \Ext^1(E, E) \xrightarrow{\gamma \circ - \circ \delta} K \to 0, \]
where $K = \ker \left( \xi \circ -\colon \Ext^0(\O_C(-1), \O_C) \to \Ext^2(\O_C(-1). \O_C(-1)) \right)$.
\end{claim}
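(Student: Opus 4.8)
The plan is to compute $\Ext^1(E,E)$ by applying $\Hom(E,-)$ and $\Hom(-,E)$ to the triangle \eqref{eq:singleirr_newext}, exactly as in the proof of the earlier lemma that computed $T_{P,p} \to T_{M_+,p}$. Concretely, I would set $A = \O_C(-1)[1]$ and $B = \O_C$, so the triangle reads $A \xrightarrow{\gamma} E \xrightarrow{\delta} B \xrightarrow{\xi} A[1]$, and write down the two long exact sequences obtained by applying $\Hom(E,-)$ to $A \to E \to B$ and $\Hom(-,E)$ to the same triangle. Splicing these into a commutative diagram with exact rows and columns (the same ``nine-term'' diagram appearing in the proof of Corollary \ref{cor:embbundle_tangentmap}, but now with $F,G$ replaced by $B,A$), a diagram chase should isolate $\Ext^1(E,E)$ between the image of $\Ext^1(A,B) = \Ext^2(\O_C,\O_C(-1))$ (via $\gamma \circ - \circ \delta$, wait — via $\delta\circ-\circ\gamma$, i.e.\ post-compose with $\delta$ and pre-compose with $\gamma$) on the left, and the kernel $K$ of $\xi\circ-$ on $\Ext^0(A,B) = \Hom(\O_C(-1),\O_C)$ on the right.

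The key steps, in order, are: (1) record the relevant $\Ext$-groups between $\O_C$ and $\O_C(-1)$ on the surface $S$ — in particular $\Ext^0(\O_C,\O_C) = \C$, $\Ext^0(\O_C(-1),\O_C)$ (sections of $\O_{\P^1}(1)$, so $2$-dimensional), $\Ext^1(\O_C,\O_C)$, $\Ext^2(\O_C,\O_C(-1))$ ($n$-dimensional, as already noted), and the vanishing of the ones that don't contribute; (2) from the triangle, show $\Hom(E,E) = \C$ (so $E$ is simple, consistent with stability) and that $\Ext^{<0}(E,E)=0$; (3) identify the cokernel of $\delta\circ-\circ\gamma\colon \Ext^2(\O_C,\O_C(-1)) \to \Ext^1(E,E)$ with $K$ via the connecting map, using that pre-composition with $\delta$ and post-composition with $\gamma$ factor through the triangle; (4) pin down the kernel of $\delta\circ-\circ\gamma$ as $\C\xi$ — this is the statement that the only extension class $\eta$ with $\gamma\circ\eta^\vee\circ\delta$-type contribution trivial is the scalar multiple of the defining class $\xi$ itself, which follows because the obstruction to splitting the triangle is exactly $\xi$.

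The main obstacle I expect is step (4): showing the kernel is precisely $\C\xi$ and not larger. This requires carefully tracking which map in the long exact sequence has $\xi$ in its image, i.e.\ identifying the term $\Hom(B,B) = \Ext^0(\O_C,\O_C) = \C$ mapping into $\Ext^1(A,B) = \Ext^2(\O_C,\O_C(-1))$ via $\xi\circ-$, and arguing this is injective (equivalently $\xi \neq 0$, which is our hypothesis). One must also be slightly careful with the placement of the two occurrences of $\gamma$ versus $\delta$ and with the $[1]$-shifts so that the degrees match: $\Ext^1(A,B) = \Hom(\O_C(-1)[1],\O_C[1]\text{-shift}) $ lands in $\Ext^2(\O_C,\O_C(-1))$ after accounting for $A = \O_C(-1)[1]$. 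Once the bookkeeping is fixed, the four-term exact sequence in the claim should drop out of the nine-term diagram by taking the appropriate row or column. A secondary, purely cosmetic point is to confirm that $K$ as defined — the kernel of $\xi\circ-\colon \Ext^0(\O_C(-1),\O_C)\to\Ext^2(\O_C(-1),\O_C(-1))$ — is the same object that appears naturally as $\ker(\gamma\circ-\circ\delta$ restricted to its image$)$ — this is immediate from the long exact sequence once written out.
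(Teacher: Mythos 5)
Your overall strategy is the intended one: the paper offers no written proof beyond ``we compute this group using the maps from \eqref{eq:singleirr_newext}'', and the computation is exactly the splicing of the long exact sequences obtained from the triangle $\O_C(-1)[1] \xrightarrow{\gamma} E \xrightarrow{\delta} \O_C \xrightarrow{\xi} \O_C(-1)[2]$ that you describe. Your step (4) in particular goes through: writing $A=\O_C(-1)[1]$, $B=\O_C$, the kernel of $\Ext^1(B,A)=\Ext^2(\O_C,\O_C(-1)) \to \Ext^1(E,E)$ is the image of $\xi\circ-\colon \Hom(B,B)\to\Ext^1(B,A)$, namely $\C\xi$, using $\Ext^1(\O_C,\O_C)=0$ for surjectivity of $\Ext^1(B,A)\to\Ext^1(B,E)$ and the surjectivity of $\Hom(E,E)\to\Hom(A,E)$ (both one-dimensional, $\id_E\mapsto\gamma$) for injectivity of $\Ext^1(B,E)\to\Ext^1(E,E)$. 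The slips in indexing ($\Ext^0(A,B)$ versus $\Ext^1(A,B)$, and which of the two compositions points in which direction) are bookkeeping, and you flag them yourself.

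The genuine gap is in your step (3), which you declare ``immediate from the long exact sequence once written out.'' It is not: exactness at $K$ is the one substantive verification in the claim. What the long exact sequences give for free is that the image of $\Ext^1(E,E)\to\Ext^1(A,B)=\Hom(\O_C(-1),\O_C)$ equals $\ker\bigl(\Ext^1(A,E)\to\Ext^2(B,E)\bigr)$, sitting inside $\Ext^1(A,E)\cong\ker(\xi\circ-)=K$; since $\Ext^2(B,E)\cong\Ext^2(\O_C,\O_C)$, the connecting map becomes $\phi\mapsto\phi\circ\xi$ on $K$. So surjectivity onto $K$ amounts to the assertion that $\phi\circ\xi=0$ in $\Ext^2(\O_C,\O_C)$ for every $\phi$ with $\xi\circ\phi=0$ in $\Ext^2(\O_C(-1),\O_C(-1))$ --- i.e.\ that a right kernel of $\xi$ is also a left kernel. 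There is no formal reason for this, and without it your argument only produces a surjection onto $K\cap\ker(-\circ\xi)$. It does hold here: under Serre duality both maps $\xi\circ\phi$ and $\phi\circ\xi$ are dual to multiplication by $\phi$, $H^0(\P^1,\O(n-2))\to H^0(\P^1,\O(n-1))$, so both vanishing conditions say that $\xi$, viewed as a functional on $\Hom(\O_C(-1),\O_C(n-2))$, annihilates $\phi\cdot H^0(\P^1,\O(n-2))$; concretely this is the matrix computation of Lemma \ref{lemma:singleirr_jumplocus} together with Claim \ref{claim:singleirr_gluedlocus} (for $\xi$ with $K\neq 0$ one has $K=\C(b_1e_0-b_0e_1)$ and that generator also kills $\xi$ under composition on the other side). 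Make this step explicit in your write-up.
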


This is still non-satisfactory, as we need to determine $K$. To do so, we will explicitly compute the map $\xi \circ -$ by carefully identifying the map that defines $K$.

First, fix a basis $e_0, e_1$ of $\Hom(\O_C, \O_C(1))$. This induces bases $\{e_0^i e_1^j : i+j=k \}$ of $\Hom(\O_C(a), \O_C(a+k))$ for $k \geq 0$ and $a \in \Z$. By Serre duality, we get that
\[ \Ext^2(\O_C, \O_C(-1)) \cong \Hom(\O_C(-1), \O_C(n-2))^\vee, \]
and so it carries a dual basis $\{ (e_0^i e_1^j)^\vee : i+j=n-1 \}$.

\begin{lemma} \label{lemma:singleirr_jumplocus}
Let $\xi = \sum_{i+j=n-1} a_{i,j} (e_0^i e_1^j)^\vee$ be an element of $\Ext^2(\O_C, \O_C(-1))$. We have that the map
\begin{equation} \label{eq:singleirr_compxi}
\xi \circ -\colon \Ext^0(\O_C(-1), \O_C) \to \Ext^2(\O_C(-1). \O_C(-1))
\end{equation}
has rank $0$ (resp. $\leq 1$) if and only if $\xi =0$ (resp. $a_{i,j} = b_0^ib_1^j$ for some $b_0, b_1$). 
\end{lemma}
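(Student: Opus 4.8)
\textbf{Proof proposal for Lemma \ref{lemma:singleirr_jumplocus}.}

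The plan is to make the composition map \eqref{eq:singleirr_compxi} completely explicit in terms of the chosen bases, so that the rank condition becomes a condition on the matrix of a bilinear pairing between spaces of binary forms. First I would identify all three $\Ext$-groups in sight as spaces of binary forms in $e_0, e_1$: we have $\Ext^0(\O_C(-1), \O_C) = \Hom(\O_C(-1), \O_C)$, which has basis $\{e_0, e_1\}$ (binary linear forms); by Serre duality $\Ext^2(\O_C(-1), \O_C(-1)) \cong \Hom(\O_C(-1), \O_C(-1)\otimes \omega_C)^\vee \cong \Hom(\O_C(-1), \O_C(n-3))^\vee$, with dual basis indexed by $\{e_0^ie_1^j : i+j = n-3\}$; and $\Ext^2(\O_C, \O_C(-1))\cong \Hom(\O_C(-1), \O_C(n-2))^\vee$ as stated in the excerpt, with dual basis $\{(e_0^ie_1^j)^\vee : i+j = n-1\}$. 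The composition $\Ext^2(\O_C, \O_C(-1)) \times \Ext^0(\O_C(-1), \O_C) \to \Ext^2(\O_C(-1), \O_C(-1))$ is, under Serre duality, dual to the multiplication map $\Hom(\O_C(-1), \O_C) \otimes \Hom(\O_C(-1), \O_C(n-3)) \to \Hom(\O_C(-1), \O_C(n-2))$, i.e. to multiplication of a linear form by a degree-$(n-3)$ form to land in degree $n-2$ — wait, one must be careful with degrees here; the relevant multiplication is $\Hom(\O_C, \O_C(1))\otimes \Hom(\O_C(-1), \O_C(n-2))\to \Hom(\O_C(-1),\O_C(n-1))$ dualized, so the pairing sends $(\xi, \ell)$, with $\ell$ a linear form and $\xi$ a linear functional on degree-$(n-1)$ forms, to the functional $g \mapsto \xi(\ell\cdot g)$ on degree-$(n-2)$ forms $g$. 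I would spell this out carefully and record the matrix of this bilinear pairing in the monomial bases: its $(k, m)$-entry (with $k$ indexing $\ell \in \{e_0, e_1\}$ and $m$ indexing the degree-$(n-2)$ monomials) reads off a coefficient $a_{i,j}$ of $\xi$, so the pairing matrix is a $2 \times (n-1)$ ``catalecticant-type'' matrix built from the coefficients $(a_{i,j})_{i+j = n-1}$.

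With that matrix in hand, the rank statement becomes pure linear algebra on binary forms. The map \eqref{eq:singleirr_compxi} is a map from a $2$-dimensional space; its rank is $0$ iff the matrix is identically zero, which happens iff all $a_{i,j}$ vanish, i.e. $\xi = 0$. This gives the first clause immediately. For the second clause, the matrix has rank $\le 1$ iff its two rows are proportional; writing out the two rows as the coefficient vectors of the two ``partial'' degree-$(n-1)$ forms obtained by stripping an $e_0$ or an $e_1$, proportionality of the rows is exactly the classical statement that the binary form $A(x,y) = \sum_{i+j=n-1} a_{i,j} x^i y^j$ has both of its first partial derivatives (equivalently: the form itself, via Euler's identity) proportional, which forces $A$ to be an $(n-1)$-st power of a linear form, i.e. $a_{i,j} = \binom{n-1}{i} b_0^i b_1^j$ up to the binomial normalization — and after absorbing binomial coefficients into the choice of dual basis (or simply into the meaning of $b_0, b_1$), this is precisely $a_{i,j} = b_0^i b_1^j$ for some $b_0, b_1 \in \C$. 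The converse is a direct check: if $A = (b_0 x + b_1 y)^{n-1}$ then the two rows of the catalecticant matrix are $b_0$ and $b_1$ times the coefficient vector of $(b_0x+b_1y)^{n-2}$, hence proportional, so the rank drops to $\le 1$ (and is exactly $1$ unless $b_0 = b_1 = 0$).

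The main obstacle I expect is bookkeeping rather than conceptual: correctly tracking the Serre-duality identifications and the resulting signs/binomial factors so that the matrix of \eqref{eq:singleirr_compxi} really is the naive catalecticant in the monomial bases, and in particular verifying that the normalization is chosen so the ``rank $\le 1$'' locus is cut out by the clean equations $a_{i,j} = b_0^ib_1^j$ (the Veronese/rational-normal-curve cone) rather than by those equations twisted by binomial coefficients. I would handle this by fixing once and for all the trivialization of $\omega_C \cong \O_C(n-2)$ used for Serre duality (so that the pairing $\Hom(\O_C(a), \O_C(b)) \times \Hom(\O_C(b), \O_C(n-2)) \to \C$ is the obvious ``top coefficient'' pairing on binary forms), and then the computation of the pairing matrix is a one-line monomial bookkeeping. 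Once that is pinned down, both clauses follow from the elementary fact quoted above about when a $2\times(n-1)$ catalecticant of a binary form has rank $\le 1$, namely that this cuts out exactly the affine cone over the rational normal curve of degree $n-1$ in $\P(\Ext^2(\O_C,\O_C(-1))) = \P^{n-1}$ — which, reassuringly, is the same rational normal curve that appears in the statement of Proposition \ref{prop:single_main}.
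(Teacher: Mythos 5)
Your proposal is correct and takes essentially the same route as the paper: both use Serre duality to identify $\xi \circ -$ with the transpose of the multiplication map $e \circ -$ between Hom-spaces of binary forms, write out the resulting $2\times(n-1)$ Hankel (catalecticant) matrix with entries $a_{i,j}$, and read off that the rank-$\leq 1$ locus is the cone over the rational normal curve $a_{i,j}=b_0^ib_1^j$. Your worry about binomial coefficients resolves exactly as you suspect: since the lemma's coefficients are taken in the dual monomial basis $(e_0^ie_1^j)^\vee$ rather than via an apolarity identification of $\xi$ with a form, the matrix entries are literally the $a_{i,j}$ and no binomial twist appears (also, your indexing ``$i+j=n-3$'' for the basis of $\Hom(\O_C(-1),\O_C(n-3))$ should read $i+j=n-2$, but this is immaterial to the argument).
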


\begin{proof}
Given $e \in \Hom(\O_C(-1), \O_C)$, consider the map $-\circ e \colon \Ext^2(\O_C, \O_C(-1)) \to \Ext^2(\O_C(-1), \O_C(-1))$. By Serre duality we have the diagram
\[ \begin{tikzcd}[column sep = large]
\Ext^2(\O_C, \O_C(-1)) \arrow[r, "-\circ e"] \arrow[d, "\cong", "SD"', no head] & \Ext^2(\O_C(-1), \O_C(-1)) \arrow[d, "\cong", "SD"', no head] \\ \Hom(\O_C(-1), \O_C(n-2))^\vee \arrow[r, "(e \circ -)^\vee"'] & \Hom(\O_C(-1), \O_C(n-3))^\vee. 
\end{tikzcd} \]
This way, to compute $- \circ e$, we instead compute the map
\[ e \circ -\colon \Hom(\O_C(-1), \O_C(n-3)) \to \Hom(\O_C(-1), \O_C(n-2)) \]
in the bases from before, and take its transpose. 

This allows us to compute the map \eqref{eq:singleirr_compxi}: on the given bases, it is given by
\[ \begin{pmatrix} a_{0, n_1} & a_{1, n_2} & \cdots & a_{n-2, 1} \\ a_{1, n-2} & a_{2, n-3} & \cdots & a_{n-1,0} \end{pmatrix}^t. \]
From here determining the rank in terms of the $a_{i,j}$ is immediate.
\end{proof}

\begin{cor}
Let $U \subset Q$ be the locus of objects whose corresponding extension class $\xi \in \Ext^2(\O_C, \O_C(-1))$ is not of the form $\xi = \sum_{i+j=n-1} b_0^ib_1^j (e_0^ie_1^j)^\vee$. Then the map $U \to M_+$ is an open embedding. 
\end{cor}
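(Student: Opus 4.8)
The plan is to realize $U \to M_+$ as an étale morphism onto an open subscheme; the crux is a dimension squeeze which forces $M_+$ to be smooth of dimension $n-1$ along the image of $U$, after which everything is formal.

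First I would record that $Q \setminus U$ is exactly the rational normal curve $\{[\textstyle\sum_{i+j=n-1} b_0^i b_1^j (e_0^i e_1^j)^\vee] : [b_0:b_1]\in\P^1\}$, i.e.\ the ``rank $\le 1$'' locus isolated in Lemma~\ref{lemma:singleirr_jumplocus}; being the image of $\P^1$ it is closed, so $U$ is open in $Q\cong\P^{n-1}$ and in particular smooth and irreducible of dimension $n-1$ (for $n\le 2$ one has $U=\emptyset$ and nothing to prove, so I assume $n\ge 3$). Next I would fix $q\in U$ with image $[E]\in M_+$, where $E$ fits into \eqref{eq:singleirr_newext} with class $\xi$ \emph{not} of the excluded form. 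Then \eqref{eq:singleirr_compxi} has rank $2$ by Lemma~\ref{lemma:singleirr_jumplocus}, so $K=0$ in Claim~\ref{claim:singleirr_Ext1}, giving $\dim_\C\Ext^1(E,E)=\dim_\C\Ext^2(\O_C,\O_C(-1))-1=n-1$. Since $E$ is $\sigma_\epsilon$-stable, its tangent space in $M_+$ is $\Ext^1(E,E)$ (Lemma~\ref{lemma:DGLAmoduli_gms}, or Corollary~\ref{cor:embbundle_tangentmap}), so $\dim_\C T_{[E]}M_+=n-1$.

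For the smoothness I would use that $[E]$ lies in the image of the morphism $Q\to M_+$ from Subsection~\ref{subsec:singleirr}, which is injective on closed points; hence that image is irreducible and constructible of dimension exactly $n-1$, so its closure lies in a component of $M_+$ of dimension $\ge n-1$. Combining, $n-1\le\dim_{[E]}M_+\le\dim_\C T_{[E]}M_+=n-1$, so $M_+$ is regular of dimension $n-1$ at every point of the image of $U$. Then $U\to M_+$ is a morphism of smooth varieties of equal dimension which, by Subsection~\ref{subsec:singleirr}, is injective on tangent spaces; as both tangent spaces have dimension $n-1$ this injection is an isomorphism at every point of $U$, so $U\to M_+$ is étale. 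Its image is therefore open, and since the map is also injective on closed points (Subsection~\ref{subsec:singleirr}) it is a monomorphism (the diagonal over the image is clopen and contains all closed points), hence an open embedding.

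I expect the main obstacle to be exactly the interlocking in the third paragraph: one must know $M_+$ is genuinely smooth — not merely of the expected tangent dimension — at the points in question, since that is what upgrades ``injective on tangent spaces'' to ``étale.'' The lower bound $\dim_{[E]}M_+\ge n-1$ really comes from the a priori $(n-1)$-dimensional family $Q\to M_+$ and cannot be gotten by local arguments alone; once it is in place, the concluding step (an étale map injective on closed points is an open embedding) is standard. The $\Ext^1$ and tangent-space computations themselves are immediate from Claim~\ref{claim:singleirr_Ext1} and Lemma~\ref{lemma:singleirr_jumplocus}.
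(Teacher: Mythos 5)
Your argument is correct, but it establishes the key smoothness step by a different mechanism than the paper. Both proofs begin identically: for $\xi\in U$ the composition \eqref{eq:singleirr_compxi} has rank $2$ by Lemma \ref{lemma:singleirr_jumplocus}, so $K=0$ in Claim \ref{claim:singleirr_Ext1} and $\dim\Ext^1(E,E)=n-1$, whence $U\to M_+$ is injective on closed points and an isomorphism on tangent spaces. At that point the paper invokes Proposition \ref{prop:versal_stopcriterion} with $d=2$ (and $I=0$): since $\hat{\O}_{U,p}$ is a regular complete local ring mapping to $\Def_E$ with an isomorphism on tangent spaces, it \emph{is} the hull, so $\hat{\O}_{M_+,[E]}\cong\hat{\O}_{U,p}$ directly -- a purely local, deformation-theoretic verification. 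You instead obtain regularity of $\O_{M_+,[E]}$ by a dimension squeeze, using the globally defined family $Q\to M_+$ (injective on closed points, hence with $(n-1)$-dimensional image) to force $\dim_{[E]}M_+\geq n-1$, against the upper bound $\dim_{[E]}M_+\leq\dim_\C T_{[E]}M_+=n-1$; then étale plus injective on closed points gives the open immersion. Your route is more elementary in that it bypasses the versality machinery of Section \ref{subsec:versal} entirely, but it genuinely needs the global input that $Q\to M_+$ is injective on closed points, and it only works where $M_+$ happens to be smooth: the paper's criterion is the same tool that is reused at the singular points of $M_+$ (e.g.\ in Subsection \ref{subsec:interlocal}, where $d=4$ is needed and no dimension count can succeed), which is why the paper phrases this step through Proposition \ref{prop:versal_stopcriterion} even in this easy case. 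Your concluding step (étale monomorphism is an open immersion, via the diagonal being an open immersion containing all closed points) is sound.
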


\begin{proof}
Fix $p \in U$, and let $E$ be the corresponding class. From Claim \ref{claim:singleirr_Ext1} we have that $\Ext^1(E, E)$ has dimension $n-1$. In particular, the map $U \to M_-$ is injective on closed points and induces an isomorphism on tangent spaces. As $\O_{U, p}$ is regular, this follows immediately by Proposition \ref{prop:versal_stopcriterion} (and the discussion thereafter).
\end{proof}

The locus $Z = Q \setminus U$ is a bit more delicate to describe. Fix $\C \xi \in Z$; from Lemma \ref{lemma:singleirr_jumplocus}, we have that $\xi=\sum_{i+j=n_1}b_0^i b_1^j (e_0^ie_1^j)^\vee$.

\begin{claim} \label{claim:singleirr_gluedlocus}
Note that $\xi$ is an extension class arising from the elementary modification performed on $S$. To show this, we let $\phi = b_1 e_0-b_0 e_1 \in \Ext^0(\O_C(-1), \O_C)$, which satisfies $\phi \circ \xi =0$. The map $\phi$ defines a point $x \in S$, seen as the extension class in \eqref{eq:singleirr_point}. By chasing out the diagram \eqref{eq:singleirr_preem}, we claim that the image of the leftmost map is $\C \xi$.

In fact, note that the image of $\Ext^1(\O_x, \O_x) \to \Ext^2(\O_C, \O_C(-1))$ equals the kernel of $\phi \circ -\colon \Ext^2(\O_C, \O_C(-1)) \to \Ext^2(\O_C, \O_C)$. The same strategy of Lemma \ref{lemma:singleirr_jumplocus} allows us to describe this map with the matrix
\[ \begin{pmatrix} b_1 & -b_0 \\ & b_1 & -b_0 \\ & & \smallddots & \smallddots \\ & & & b_1 & -b_0 \end{pmatrix}. \]
The kernel is now clearly spanned by $\xi$. 
\end{claim}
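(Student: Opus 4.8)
The plan is to identify, through Proposition \ref{prop:em_main}, the extension class of the object attached to $x\in C$ by the elementary modification of Subsection \ref{subsec:singleirr}, and to check that it is a scalar multiple of the given $\xi$; once the setup is in place, this is a short linear-algebra computation of a single composition map.

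First I would unwind Proposition \ref{prop:em_main} over $C$. As noted in Subsection \ref{subsec:singleirr}, $C$ is an effective Cartier divisor in $S=M_-$, so the blow-up is $S$ itself and its exceptional divisor is $C$; and since $N_{C/S}$ is a line bundle, at each $x\in C$ there is a unique normal direction up to scalar. Thus Proposition \ref{prop:em_main} attaches to $x$ an object $\K_x$ in a triangle $\O_C(-1)[1]\to\K_x\to\O_C\xrightarrow{\xi'}\O_C(-1)[2]$, and $\C\xi'$ is the image of the right-hand vertical map $N_{C/S,x}\to\Ext^2(\O_C,\O_C(-1))$ of \eqref{eq:singleirr_preem} (in particular we will need this map to be nonzero). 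Because \eqref{eq:singleirr_preem} commutes, its middle vertical is the Kodaira--Spencer isomorphism, and $T_{S,x}\to N_{C/S,x}$ is surjective, this image equals $\mathrm{Im}\big(\beta\circ-\circ\alpha\colon\Ext^1(\O_x,\O_x)\to\Ext^2(\O_C,\O_C(-1))\big)$. So it suffices to show this image is exactly $\C\xi$ — which also re-establishes injectivity of $N_{C/S,x}\to\Ext^2(\O_C,\O_C(-1))$, hence that $\K_x$ is a non-split extension.

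Next I would show $\mathrm{Im}(\beta\circ-\circ\alpha)=\ker\big(\phi\circ-\colon\Ext^2(\O_C,\O_C(-1))\to\Ext^2(\O_C,\O_C)\big)$ from the long exact sequences attached to the triangle \eqref{eq:singleirr_point}, $\O_C\xrightarrow{\alpha}\O_x\xrightarrow{\beta}\O_C(-1)[1]\xrightarrow{\phi[1]}\O_C[1]$. Applying $\Hom(\O_C,-)$ and using $\Ext^1(\O_C,\O_C)=0$ — immediate from the local-to-global spectral sequence, since $\EXT^{\geq2}(\O_C,\O_C)=0$, $H^1(\O_{\P^1})=0$, and $\EXT^1(\O_C,\O_C)=N_{C/S}=\O_{\P^1}(-n)$ has no global sections — gives that $\beta\circ-\colon\Ext^1(\O_C,\O_x)\to\Ext^2(\O_C,\O_C(-1))$ is injective with image $\ker(\phi\circ-)$. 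Applying $\Hom(-,\O_x)$ and using that $\O_C$ has a length-one locally free resolution on $S$ (so $\Ext^{\geq2}(\O_C,\O_x)=0$) and that $\Hom(\O_x,\O_x)\to\Hom(\O_C,\O_x)$ is an isomorphism (precomposition with the epimorphism $\alpha$ between one-dimensional groups), a dimension count forces $-\circ\alpha\colon\Ext^1(\O_x,\O_x)\to\Ext^1(\O_C,\O_x)$ to be surjective. Composing the two statements gives the displayed equality.

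Finally I would make $\ker(\phi\circ-)$ explicit, following the Serre-duality computation in the proof of Lemma \ref{lemma:singleirr_jumplocus}. Adjunction gives $\omega_S|_C\cong\O_{\P^1}(n-2)$, so Serre duality presents $\Ext^2(\O_C,\O_C(-1))\cong H^0(\O_{\P^1}(n-1))^\vee$ and $\Ext^2(\O_C,\O_C)\cong H^0(\O_{\P^1}(n-2))^\vee$, under which $\phi\circ-$ is the transpose of multiplication by the linear form $\phi=b_1e_0-b_0e_1$ on sections over $\P^1$; in the monomial bases this is exactly the $(n-1)\times n$ matrix with $b_1$ along the diagonal and $-b_0$ along the superdiagonal displayed in the claim. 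Multiplication by a nonzero linear form is injective on global sections over $\P^1$, so this matrix has rank $n-1$; its kernel is therefore one-dimensional, and solving the recursion $b_1c_{i,j}=b_0c_{i-1,j+1}$ on the coefficients of a kernel vector shows $\sum_{i+j=n-1}b_0^ib_1^j(e_0^ie_1^j)^\vee=\xi$ spans it. Combined with the previous step this gives $\C\xi'=\C\xi$, i.e. $\xi$ is the extension class of the object attached to $x$, which is the assertion. I do not expect a genuine obstacle — everything is functoriality plus $2\times n$ linear algebra; the one delicate point is bookkeeping, namely matching $x\in C$ with the form $\phi$ and with the correct normal direction, and keeping the Serre-duality transposes straight so that the matrix comes out as displayed rather than as its transpose.
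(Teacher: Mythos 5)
Your proposal is correct and follows the same route as the paper: identify the image of $\beta\circ-\circ\alpha\colon \Ext^1(\O_x,\O_x)\to\Ext^2(\O_C,\O_C(-1))$ with $\ker(\phi\circ-)$, then compute $\phi\circ-$ via Serre duality as the displayed bidiagonal matrix and read off that the kernel is $\C\xi$. The only difference is that you spell out the long-exact-sequence and dimension-count justification for the equality $\Im(\beta\circ-\circ\alpha)=\ker(\phi\circ-)$ (via $\Ext^1(\O_C,\O_C)=0$ and surjectivity of $-\circ\alpha$), which the paper simply asserts; this is a faithful expansion rather than a different argument.
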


This way, the image of $Z \subset Q$ corresponds to the image of $C \subset S$ inside of $M_+$. In other words, the two maps $S \to M_+$, $Q \to M_+$ induce a map $S \sqcup_C Q \to M_+$, where $S \sqcup_C Q$ is the gluing of $S$ and $Q$ along $C$. This is proper and bijective on closed points. One easily verifies that the map is also injective on tangent vectors. Note however that we cannot use Lemma \ref{lemma:critiso_main} directly, as we do not know whether $M_+$ is reduced.

\subsection{Local structure} \label{subsec:singlelocal}

To finish up the proof of Proposition \ref{prop:single_main}, we need to compute the local structure of the moduli spaces $M_\epsilon$. To do this, we will use our discussion from Section \ref{sec:localstr}. 

To describe the local structure of $M_0$ and $M_+$, we need to compute the deformation functor associated to $\O_C \oplus \O_C(-1)[1]$, and to extensions of the form $\O_C(-1)[1] \to E \to \O_C \to \O_C(-1)[2]$, respectively. Note that both objects are supported on $C$. This way, we can compute the deformation functor associated to $E$ by working on a local model of $(C \subset S)$, due to the following lemma.

\begin{lemma}[cf. \cite{TX22}*{Lemma 7.4}]
Let $X$ be a finite type $\C$-scheme and let $Z \subset X$ be a closed subset. Denote by $\DC^b_Z(X)$ the full subcategory of $\DC^b(X)$ of objects supported set-theoretically at $Z$, and let $\hat{X}$ be the completion of $X$ along $Z$. Then $\DC^b_Z(X) \cong \DC^b_{Z}(\hat{X})$. 
\end{lemma}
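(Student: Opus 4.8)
The plan is to identify $\DC^b_Z(X)$ with the full subcategory of the derived category of coherent sheaves on the formal scheme $\hat X$ that are set-theoretically supported on $Z$, and then to check that this formal-completion functor is an equivalence. First I would set up the completion functor: restriction along the canonical morphism $\hat X \to X$ (equivalently, $(-)\otimes_{\O_X}\hat{\O}_X$, which is exact since $\O_X \to \hat{\O}_X$ is flat for a Noetherian scheme), sending a complex supported on $Z$ to its completion. Because $Z$ is the support, every object of $\DC^b_Z(X)$ is (quasi-isomorphic to) a bounded complex of coherent sheaves annihilated by a power of the ideal sheaf $\mathcal I_Z$, so its cohomology sheaves are already $\mathcal I_Z$-adically complete; hence restriction to $\hat X$ loses no information on objects, and the same holds for $\Hom$-groups, which are finitely generated modules supported on $Z$ and therefore unchanged by completion. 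This gives full faithfulness.

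For essential surjectivity, the key point is that an object of $\DC^b_{Z}(\hat X)$ has coherent cohomology sheaves killed by a power of $\mathcal I_Z \hat{\O}_X$, and for each fixed power $k$ one has $\O_X/\mathcal I_Z^k \cong \hat{\O}_X/\mathcal I_Z^k\hat{\O}_X$ as sheaves of rings on the common underlying topological space $Z$. Thus a bounded complex of coherent $\hat{\O}_X$-modules annihilated by $\mathcal I_Z^k$ is literally the same datum as a bounded complex of coherent $\O_X/\mathcal I_Z^k$-modules, which we may push forward to $X$ to obtain an object of $\DC^b_Z(X)$ whose completion recovers the original. One then checks this construction is inverse to the completion functor up to natural isomorphism. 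Since the statement is about an equivalence of categories (not, say, of stable $\infty$-categories or with extra structure), it suffices to produce the quasi-inverse on objects and morphisms as above.

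The step I expect to be the main obstacle is the bookkeeping needed to pass freely between ``bounded complex of coherent sheaves supported on $Z$'' and ``bounded complex of coherent modules over a thickening $\O_X/\mathcal I_Z^k$''. One has to be slightly careful that an object of $\DC^b_Z(X)$, a priori only set-theoretically supported on $Z$, can be represented by a complex \emph{scheme-theoretically} supported on some thickening $Z_k$ (this uses coherence and boundedness), and that the resulting $k$ can be taken uniformly; similarly on the formal side one invokes that a coherent sheaf on $\hat X$ with support in $Z$ is annihilated by a power of $\mathcal I_Z\hat{\O}_X$. Granting these standard facts about Noetherian (formal) schemes, the equivalence is then formal: flatness of completion handles full faithfulness, and the identification of finite-order thickenings handles essential surjectivity. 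I would cite \cite{TX22}*{Lemma 7.4} for the analogous statement and simply indicate these two inputs.
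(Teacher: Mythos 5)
The paper gives no proof of this lemma; it is stated with the citation to \cite{TX22}*{Lemma 7.4} and immediately applied, so there is no in-paper argument to compare yours against. Judged on its own, your outline is the standard one and is essentially sound: flatness of $\O_X \to \hat{\O}_X$ plus the fact that $I$-power-torsion finitely generated modules are unchanged by $I$-adic completion gives full faithfulness, and the identification $\O_X/\mathcal{I}_Z^k \cong \hat{\O}_X/\mathcal{I}_Z^k\hat{\O}_X$ of the finite-order thickenings gives essential surjectivity.

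The one step you flag as delicate is indeed the only real gap, and I would close it differently from how you propose. You want to represent an arbitrary object of $\DC^b_Z$ by an honest bounded complex of coherent sheaves annihilated by a single power of $\mathcal{I}_Z$; this amounts to the equivalence $\DC^b(\Coh_Z(X)) \simeq \DC^b_Z(X)$, which is true for Noetherian schemes but is itself a nontrivial dévissage statement, and you do not need its full strength. For full faithfulness you only need that the cohomology sheaves of $R\HOM(E,F)$ are coherent and $\mathcal{I}_Z$-power torsion, which follows directly from $E,F$ having torsion cohomology, with no choice of representative. For essential surjectivity, rather than choosing torsion representatives of an object of $\DC^b_Z(\hat X)$, use that the essential image of a fully faithful exact functor is a triangulated subcategory (given a triangle with two vertices in the image, the connecting morphism lifts uniquely by full faithfulness, so the third vertex is the cone of a lifted morphism); since the truncation triangles $\tau_{<n}E \to E \to H^n(E)[-n]$ exhibit every object of $\DC^b_Z(\hat X)$ as a finite iterated extension of shifts of objects of $\Coh_Z(\hat X)$, and these already lie in the image by the abelian-level identification with $\Coh_Z(X)$, you are done. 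With that adjustment the argument is complete; the uniform-$k$ bookkeeping you worry about then only ever arises for a single coherent sheaf at a time, where it is immediate from coherence.
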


This way, we let $X$ to be the total space of $\O_{\P^1}(-n)$. This is a toric variety, and we use \cite{CLS11}*{\textsection 10.1--2} to get explicit charts. Here, the variety $X$ is obtained by gluing $U_1 = \Spec \C[x, u]$ and $U_2 = \Spec \C[y, v]$ along $\C[x, u]_x \cong \C[y, v]_y$, identifying $y=x^{-1}, v=ux^n$. 

We let $C$ be the zero section of the bundle, described as the zero locus of $u$ in $U_1$. We point out that $(C \subset X)$ is a local model for $(C \subset S)$.

Let $L$ be the fiber of the bundle $X \to \P^1$ given by the zero locus of $x$ in $U_1$. This way, we get that $L \cong \mathbb{A}^1$ and $C \cong \P^1$. The intersection of $L$ and $C$ is transversal, and $C^2 = -n$. At last, the line bundles $\O_X(-C)$ and $\O_X(L)$ have transition functions $f_{12}=y^n$, resp. $g_{12}=y$. 

We take the resolutions $\O_C = [\O_X(-C) \to \O_X]$ and $\O_C(-1) = [\O_X(-C-L) \to \O_X(-L)]$. On the other hand, we have
\begin{align*}
\Aut(\O_C \oplus \O_C(-1)[1]) &\cong \Aut(\O_C) \times \Aut(\O_C(-1)) \cong \begin{pmatrix} \C^\ast & 0 \\ 0 & \C^\ast \end{pmatrix}
\end{align*}
This matrix group acts naturally on the resolution
\begin{equation} \label{eq:singlelocal_res}
\O_C \oplus \O_C(-1)[1] \cong [\O_X(-C-L) \to \O_X(-L) \oplus \O_X(-C) \to \O_X].
\end{equation}

\begin{lemma} \label{lemma:singlelocal_hullE}
Consider the \v{C}ech semicosimplicial DGLA $L_0 \rightrightarrows L_1$ induced by the cover $\Ucover = \{ U_1, U_2 \}$ on $\Hom(E, E)$, where $E$ is the resolution \eqref{eq:singlelocal_res}. We have that $\Aut(E) \cong (\C^\ast)^2$, and that $\Def_E$ admits an $\Aut(E)$-equivariant hull $\C[[p_1, \dots, p_n, q_0, q_1]]/(p_iq_0+p_{i+1}q_1, i=1, \dots, n-1)$, where $(\C^\ast)^2$ acts with weight $(-1, 1)$ on the $p_i$, and $(1, -1)$ on the $q_j$. 
\end{lemma}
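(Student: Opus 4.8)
The plan is to compute the semicosimplicial DGLA $L_0 \rightrightarrows L_1$ explicitly on the two charts $U_1, U_2$, extract the cohomology of its Thom--Whitney totalization $\Tot_{TW}(L^\Delta)$ to identify the tangent space $\Ext^1(E,E)$ and the obstruction space $\Ext^2(E,E)$, and then use the obstruction-theoretic construction of the hull from Subsection \ref{subsec:versal} (in particular Corollary \ref{cor:versal_quadratic}) to pin down the hull. First I would unwind $\Hom(E,E)$ for the three-term locally free resolution \eqref{eq:singlelocal_res}: on each chart $U_i$ one writes down the complex $\Hom(E^\bullet, E^\bullet)$ with the explicit transition functions $f_{12}=y^n$ and $g_{12}=y$, and computes the $\partial$-\v{C}ech differentials on $L_1 = \Gamma(U_{12}, \Hom(E,E))$. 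Since $U_1, U_2$ are affine and $\Hom(E,E)$ is a complex of locally free sheaves, $\Tot_{TW}(L^\Delta)$ computes $R\Hom(E,E)$ as a DGLA; one then reads off $H^0 = \Ext^0(E,E) = \C^2$ (giving $\Aut(E) = (\C^\ast)^2$, already computed above), $H^1 = \Ext^1(E,E)$, and $H^2 = \Ext^2(E,E)$.

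The key computation is the identification of $\Ext^1$ and $\Ext^2$ together with the $(\C^\ast)^2$-action and the bracket $H^1 \times H^1 \to H^2$. I expect $\Ext^1(E,E)$ to be $(n+2)$-dimensional, with $n$ classes $p_1, \dots, p_n$ coming from $\Ext^1(\O_C(-1)[1], \O_C) = \Ext^0(\O_C(-1),\O_C)$-type contributions (weight $(-1,1)$, i.e. the off-diagonal direction deforming the extension, parametrized by $H^0(C,\O_C(1))$ which is $2$-dimensional — here the count $n$ should instead come from $\Ext^2(\O_C,\O_C(-1))^\vee$-type data, so I would be careful tracking which summand contributes what) and two classes $q_0, q_1$ coming from the opposite off-diagonal direction with weight $(1,-1)$. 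The diagonal pieces $\Ext^1(\O_C,\O_C) = \Ext^1(\O_C(-1),\O_C(-1)) = H^1(C,\O_C) = 0$ since $C \cong \P^1$, so there are no weight-$(0,0)$ tangent directions. Dually $\Ext^2(E,E)$ should be $(n-1)$-dimensional, concentrated in a single weight, matching the number of relations $p_iq_0 + p_{i+1}q_1$. The bracket $[-,-]\colon \Ext^1 \times \Ext^1 \to \Ext^2$ is the one induced by Yoneda composition; by weight reasons it can only pair a weight-$(-1,1)$ class with a weight-$(1,-1)$ class, and the resulting bilinear map $\C^n \times \C^2 \to \C^{n-1}$ must be, up to coordinates, $(p_i, q_j) \mapsto (p_i q_0 + p_{i+1}q_1)_{i=1,\dots,n-1}$ — this is exactly the Koszul-type pairing, and it matches the matrix appearing in Lemma \ref{lemma:singleirr_jumplocus} and Claim \ref{claim:singleirr_gluedlocus}, which is a good consistency check.

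Once the primary obstruction (the quadratic part $\kappa_2$) is shown to be this Koszul pairing, Corollary \ref{cor:versal_quadratic} finishes the job provided there are no higher obstructions: since the candidate hull $\C[[p_1,\dots,p_n,q_0,q_1]]/(p_iq_0+p_{i+1}q_1)$ is already generated by the quadrics $\kappa_2$ cuts out, one checks $I \cap \m_S^3 \subseteq \m_S I$ for the ideal $I = (p_iq_0+p_{i+1}q_1 : i=1,\dots,n-1)$ — this is a standard fact about this determinantal/Koszul ideal — and then the corollary gives $R \cong \C[[p_1,\dots,p_n,q_0,q_1]]/I$ directly. The $\Aut(E)$-equivariance of the hull is automatic from the construction in Subsection \ref{subsec:versal} once the obstruction map is computed equivariantly, which it is since every step (resolution, \v{C}ech cover, Thom--Whitney totalization) was arranged to be $(\C^\ast)^2$-equivariant via \cite{CPZ24}*{Lemma 2.2}. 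The main obstacle I anticipate is the bookkeeping in the explicit \v{C}ech computation on the total space of $\O_{\P^1}(-n)$: correctly assembling the $3 \times 3$-block $\Hom$-complex with the right transition functions and signs, and verifying that the \v{C}ech $H^1$ and $H^2$ have exactly the claimed dimensions and weights — the Serre-duality identifications $\Ext^2(\O_C,\O_C(-1)) \cong \Hom(\O_C(-1),\O_C(n-2))^\vee$ used elsewhere in the section will be the main tool for getting the dimension $n-1$ of the obstruction space right.
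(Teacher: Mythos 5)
Your overall architecture matches the paper's: compute the \v{C}ech semicosimplicial DGLA on the two charts, identify $\Ext^1(E,E) \cong \Ext^2(\O_C,\O_C(-1)) \oplus \Hom(\O_C(-1),\O_C)$ with its $(\C^\ast)^2$-weights, lift representatives to the Thom--Whitney totalization, and show the primary obstruction is the Koszul-type pairing. But there are two problems. First, a computational one: $\Ext^2(E,E)$ is not $(n-1)$-dimensional. It equals $\Ext^2(\O_C,\O_C)\oplus\Ext^2(\O_C(-1),\O_C(-1))$, which is $(2n-2)$-dimensional, all in weight $(0,0)$ (the off-diagonal pieces $\Ext^3(\O_C,\O_C(-1))$ and $\Ext^1(\O_C(-1),\O_C)$ vanish). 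So the bracket $\C^n\times\C^2\to\Ext^2(E,E)$ lands in a space twice as large as the number of relations, and the claim that it ``must be'' $(p_i,q_j)\mapsto(p_iq_0+p_{i+1}q_1)$ cannot be read off from weights and dimensions: one has to compute the $2n$ classes $[\alpha_i,\beta_j]$ explicitly and verify that they depend only on $i-j$, that $\gamma_1,\dots,\gamma_{n-1}$ are linearly independent in $\Ext^2(E,E)$, and that $\gamma_0,\gamma_n$ are exact. This explicit verification is the bulk of the paper's proof and is not replaced by the consistency check against Lemma \ref{lemma:singleirr_jumplocus}.

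Second, and more seriously, the appeal to Corollary \ref{cor:versal_quadratic} is not available as you use it. That corollary is stated ``in the notation of Proposition \ref{prop:versal_stopcriterion}'', so it presupposes a morphism $h_A\to\Def_E$ from the candidate hull inducing an isomorphism on tangent spaces. At a stable point such a family is supplied by a neighborhood in the moduli space, but $E=\O_C\oplus\O_C(-1)[1]$ is the strictly polystable object and you do not construct such a family. Without it, knowing $\kappa_2$ only determines $J_2=J+\m_S^3$, and higher-order obstructions could still enlarge $J$; your caveat ``provided there are no higher obstructions'' is precisely the point that must be proved. It is not automatic here: since $\gamma_0=d\mu$ and $\gamma_n=d\eta$ are exact but nonzero, the second-order Maurer--Cartan element must be corrected by $\mu$- and $\eta$-terms, and one must then check that all brackets of $\mu$ and $\eta$ with the $\alpha_i$, $\beta_j$ and with each other vanish --- exactly the list of vanishings the paper verifies. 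The parallel computation in Lemma \ref{lemma:interirr_order3} produces genuine cubic relations, so this step can fail and must be carried out explicitly.
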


\begin{proof}
First of all, recall that the Thom--Whitney totalization $\Tot_{TW}(L^\Delta)$ is quasi-isomorphic to $R\Hom(E, E)$. This way, we can compute the dimension of the cohomology groups of $\Tot_{TW}(L^\Delta)$ by computing $R^i\Hom(E, E)$. We get that $\dim H^i(\Tot_{TW}(L^\Delta)) =2, n+2, 2n-2, n-2$ for $i=0, 1, 2, 3$, and zero otherwise. Moreover, we have that $R^1\Hom(E, E) \cong \Ext^2(\O_C, \O_C(-1)) \oplus \Hom(\O_C(-1), \O_C)$. This helps us finding explicit representatives for a basis of $H^1(\Tot_{TW}(L^\Delta))$:
\begin{itemize}
\item Consider the map of complexes $R\Hom(\O_C, \O_C(-1)[1]) \to R\Hom(E, E)$ 
induced by the resolutions for $\O_C$ and $\O_C(-1)[1]$. Here, a basis for the $n$-dimensional space $R^1\Hom(\O_C, \O_C(-1)[1])$ can be lifted to the \v{C}ech cover, hence to elements in $\Gamma(U_{12}, \HOM(\O_X(-C), \O_X(-L)))$. This induces elements $\alpha_1, \dots, \alpha_n$ in $\Tot_{TW}(L^\Delta)^1$, given by the formulas
$(\alpha_i)_0=0$, $(\alpha_i)_1 = dt_0 \otimes \overline{\alpha}_i$, where $(\overline{\alpha}_i)_{12}^{-1,-1} = \begin{pmatrix} 0 & y^i \\ 0 & 0 \end{pmatrix}. $

\item Consider the map of complexes $R\Hom(\O_C(-1)[1], \O_C) \to R\Hom(E, E)$. constructed as above. By looking at the image on degree 1, we get a two-dimensional subspace of $R^1\Hom(E,E)$. We lift a basis to get representatives $\beta_0, \beta_1 \in \Tot_{TW}(L^\Delta)^1$ given by the formulas $(\beta_j)_0 = 1 \otimes \overline{\beta}_j$, $(\beta_j)_1 = 1 \otimes \partial_1\overline{\beta}_j$, with
\begin{gather*}
(\overline{\beta}_0)_1^{-2,-1} = \begin{pmatrix} 0 \\ 1 \end{pmatrix}, \quad (\overline{\beta}_0)_2^{-2, -1} = \begin{pmatrix} 0 \\ y \end{pmatrix}, \\ (\overline{\beta}_0)_1^{-1,0} = \begin{pmatrix} 1 & 0 \end{pmatrix}, \quad (\overline{\beta}_0)_2^{-1,0} = \begin{pmatrix} y & 0 \end{pmatrix}; \\
(\overline{\beta}_1)_1^{-2,-1} = \begin{pmatrix} 0 \\ x \end{pmatrix}, \quad (\overline{\beta}_1)_2^{-2, -1} = \begin{pmatrix} 0 \\ 1 \end{pmatrix}, \\ (\overline{\beta}_1)_1^{-1,0} = \begin{pmatrix} x & 0 \end{pmatrix}, \quad (\overline{\beta}_1)_2^{-1,0} = \begin{pmatrix} 1 & 0 \end{pmatrix}.
\end{gather*}
\end{itemize}
Note now that $[\alpha_i, \alpha_{i'}]=[\beta_j,\beta_{j'}]=0$, which can be checked directly from \eqref{eq:semiDGLA_commutator}. On the other hand, we have that $[\alpha_i, \beta_j] = \gamma_{i-j}$, with $(\gamma_k)_0=0$, $(\gamma_k)_1 = dt_0 \otimes \overline{\gamma}_k$, for $(\overline{\gamma}_k)_{12}^{-2,-1} = \begin{pmatrix} y^k \\ 0 \end{pmatrix}, (\overline{\gamma}_k)_{12}^{-1,0} = \begin{pmatrix} 0 & - y^k \end{pmatrix}$. 
Here, we have that the images of $\{\gamma_1, \dots, \gamma_{n-1} \}$ in $\Ext^2(E, E)$ are linearly independent. We also have $\gamma_0 = d \mu$ and $\gamma_n =d\eta$, where
\begin{gather*}
(\mu)_0 = 1 \otimes \overline{\mu}, \quad (\mu)_1 = t_0 \otimes \partial_1 \overline{\mu}, \qquad \overline{\mu}_1^{-2,-1} = \begin{pmatrix} 1 \\ 0 \end{pmatrix}, \quad \overline{\mu}_1^{-1,0} = \begin{pmatrix} 0 & -1 \end{pmatrix}; \\
(\eta)_0 = 1 \otimes \overline{\eta}, \quad (\eta)_1 = t_1 \otimes \partial_0 \overline{\eta}, \qquad \overline{\eta}_2^{-2,-1} = \begin{pmatrix} -1 \\ 0 \end{pmatrix}, \quad \overline{\eta}_2^{-2,-1} = \begin{pmatrix} 0 & 1 \end{pmatrix}.
\end{gather*}
It is clear that $[\alpha_i, \eta]$, $[\alpha_i, \mu]$, $[\mu, \mu]$, $[\eta,\mu]$, $[\eta, \eta]$ are all equal to zero. A fast computation guarantees that $[\beta_j, \eta]=[\beta_j, \mu]=0$ as well.

With this in mind, denote by $p_1, p_2, \dots, p_n, q_1, q_2$ the dual basis of the images of $\alpha_1, \alpha_2, \dots, \alpha_n, \beta_1, \beta_2$ in $\Ext^2(E, E)$. The automorphism group $(\C^\ast)^2$ acts with weights $(-1, 1)$ on the $p_i$, and $(1, -1)$ on the $q_j$. With the lifts chosen above, we have that the hull of $\Def_L$ is $R=\C[[p_1, \dots, p_n, q_1, q_2]]/(p_kq_0 + p_{k+1}q_1: k=1, \dots, n-1)$. By construction, this is compatible with the $(\C^\ast)^2$-action. 
\end{proof}

\begin{cor}
We have that $M_0 \cong T$. 
\end{cor}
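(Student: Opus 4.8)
The plan is to build a morphism $g\colon T\to M_0$ and show it is a finite, birational map onto a normal scheme, which forces it to be an isomorphism.

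First I would produce the morphism. Recall $M_-\cong S$, with universal family $\O_\Delta$. By Subsection \ref{subsec:singless} every skyscraper $\O_x$ is $\sigma_0$-semistable of phase $1$, so $\O_\Delta$ is a family of $\sigma_0$-semistable objects of class $[pt]$ over $S$; by the universal property of the good moduli space $M_0$ it induces a morphism $\psi\colon S\to M_0$ sending $x$ to the $S$-equivalence class of $\O_x$. By the exact sequence \eqref{eq:singless_point}, this class equals $p:=[\O_C\oplus\O_C(-1)[1]]$ exactly when $x\in C$, while the classes $[\O_x]$ with $x\notin C$ are pairwise distinct. Since $f\colon S\to T$ contracts exactly $C$ and $f_\ast\O_S=\O_T$ ($T$ being normal and $f$ proper birational), while $M_0$ is separated, $\psi$ is constant on the fibers of $f$ and therefore factors through a unique morphism $g\colon T\to M_0$. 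Using the classification of $\sigma_0$-polystable objects in Proposition \ref{prop:single_main}, $g$ is bijective on closed points; being a morphism from a proper scheme to a separated one with finite fibers, $g$ is finite. Also $M_0$ is connected, as $\psi$ is surjective on closed points and $S$ is connected.

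Next I would reduce everything to the normality of $M_0$: once $M_0$ is normal it is integral (normal plus connected), and then $g$ is a finite, bijective --- hence, in characteristic zero, birational of degree one --- morphism of integral schemes with normal target, so $g$ is an isomorphism. Normality is checked on completed local rings. For $q\neq p$ the object over $q$ is a stable skyscraper $\O_x$, $x\notin C$, with unobstructed deformations, so $\hat\O_{M_0,q}\cong\C[[\Ext^1(\O_x,\O_x)]]\cong\C[[t_1,t_2]]$ is regular. For $q=p$, the polystable object $E=\O_C\oplus\O_C(-1)[1]$ has $\Aut(E)\cong(\C^\ast)^2$, which is linearly reductive, so by Lemma \ref{lemma:DGLAmoduli_gms} together with Lemma \ref{lemma:singlelocal_hullE},
\[ \hat\O_{M_0,p}\cong R^{(\C^\ast)^2},\qquad R=\C[[p_1,\dots,p_n,q_0,q_1]]/(p_iq_0+p_{i+1}q_1:1\le i\le n-1), \]
where $(\C^\ast)^2$ acts with weight $(-1,1)$ on the $p_i$ and $(1,-1)$ on the $q_j$.

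The final, and only genuinely new, step is to compute this invariant ring. Since the relations defining $R$ are $(\C^\ast)^2$-invariant and $(\C^\ast)^2$ is linearly reductive, $R^{(\C^\ast)^2}$ is the quotient of $\C[[p_1,\dots,p_n,q_0,q_1]]^{(\C^\ast)^2}$ by their images; a monomial is invariant precisely when its total degree in the $p$'s equals its total degree in the $q$'s, so the invariant subring is generated by the $z_{ij}=p_iq_j$ subject to the relations $z_{ij}z_{kl}=z_{il}z_{kj}$, and the images of the relations of $R$ become $z_{i0}+z_{i+1,1}=0$ for $1\le i\le n-1$. Eliminating $z_{21},\dots,z_{n1}$ and setting $c_0=-z_{11}$ and $c_i=z_{i0}$ for $1\le i\le n$, a direct check turns the remaining relations into exactly the $2\times 2$ minors of $\left(\begin{smallmatrix}c_0&c_1&\cdots&c_{n-1}\\ c_1&c_2&\cdots&c_n\end{smallmatrix}\right)$. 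Hence $\hat\O_{M_0,p}$ is the completed local ring at the vertex of the affine cone over the rational normal curve of degree $n$, i.e. of the cyclic quotient singularity $\tfrac1n(1,1)$ --- which is precisely $\hat\O_{T,f(C)}$. In particular it is a normal domain, so $M_0$ is normal and the argument concludes. The main obstacle is this invariant-ring identification; the rest (the factorization of $\psi$ through $T$, and the finiteness and bijectivity of $g$) is routine but needs a little care.
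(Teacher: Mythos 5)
Your proof is correct and follows essentially the same route as the paper: both reduce to the local computation of $\hat\O_{M_0,q}$ via Lemma \ref{lemma:DGLAmoduli_gms} (regularity at stable skyscrapers, and the $(\C^\ast)^2$-invariants of the hull from Lemma \ref{lemma:singlelocal_hullE} at the polystable point), identifying the latter with the $2\times 2$ minors of the catalecticant matrix, i.e.\ the $\tfrac1n(1,1)$-singularity. Your write-up is in fact more careful about the global step --- factoring $S\to M_0$ through $T$ and invoking finiteness plus normality to upgrade the local-ring identification to an isomorphism $T\cong M_0$ --- which the paper leaves implicit, and your index conventions for the invariant-ring computation are the consistent ones.
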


\begin{proof}
Note that there is a natural map $S \to M_0$, mapping $x$ to the S-equivalence class of $[\O_x]$. The map only identifies the points of $C$ to a point. This way, it suffices to compute the local rings of $M_0$.

First, note that $\O_x$ is $\sigma_0$-stable if $x \notin C$. This way, the ring $\hat{\O}_{M_0, [\O_x]}$ is regular, as we can apply Lemma \ref{lemma:DGLAmoduli_gms} together with the fact that $\O_x$ has unobstructed deformations.

Second, we need to compute $\hat{\O}_{M_0, E}$. This follows directly from the previous computation and by Lemma \ref{lemma:DGLAmoduli_gms}. Here the $(\C^\ast)^2$-invariants of $\C[[p_i, q_j]]$ correspond to the subalgebra generated by the binomials $p_iq_j$. It follows that the $(\C^\ast)^2$-invariants of $R$ are
\begin{align*}
& \C[[p_iq_j: 1 \leq i \leq n, j=1,2]]/(p_iq_0 + p_{i+1}q_1: i=1, \dots, n) \\
\cong& \C[[s_0, \dots, s_n]]/\left(\rk \begin{pmatrix} s_0 & s_1 & \dots & s_{n-1} \\ s_1 & s_2 & \dots & s_n \end{pmatrix} \leq 1\right), 
\end{align*}
which is exactly the germ of an $\frac{1}{n}(1,1)$-singularity.
\end{proof}

This concludes the local structure of $M_0$. Compare to \cite{Tod13}*{Theorem 3.16(i)}, where the case $n=1$ was solved with a different method.

At last, let us discuss the case for $M_+$. We have the following statement.

\begin{prop}[\cite{TX22}*{\textsection 8}] \label{prop:singlelocal_reduced}
The moduli space $M_+$ is reduced. 
\end{prop}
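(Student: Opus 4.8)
The plan is to show that $M_+$ is reduced by verifying, at each closed point, that the complete local ring of $M_+$ is reduced, using the DGLA machinery of Section \ref{sec:localstr} together with the global description of $M_+$ as (set-theoretically) $S \sqcup_C Q$ obtained in Subsection \ref{subsec:singleirr}. Since reducedness is local and all points away from $C \subseteq S$ correspond to stable skyscraper sheaves $\O_x$ with unobstructed deformations, the ring $\hat{\O}_{M_+, [\O_x]}$ is regular there, and the only points requiring work are those lying over $C$, i.e. the closed points of $Q = \P\Ext^2(\O_C, \O_C(-1))$.

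First I would split $Q$ into the open locus $U$ and the closed locus $Z = Q \setminus U$ from Subsection \ref{subsec:singleirr}. On $U$ the corollary already shows $U \to M_+$ is an open embedding, so $M_+$ is regular (hence reduced) along the image of $U$. The remaining task is to compute $\hat{\O}_{M_+, E}$ for $E$ the object attached to a class $\xi = \sum_{i+j=n-1} b_0^i b_1^j (e_0^i e_1^j)^\vee \in Z$. Such an $E$ is stable (Subsection \ref{subsec:singless}), so $\Aut(E) = \C$ acts trivially and Lemma \ref{lemma:DGLAmoduli_gms} gives $\hat{\O}_{M_+, E} \cong \C[[\Ext^1(E,E)]]/I$ for the ideal $I$ cutting out the hull of $\Def_E$. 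By Claim \ref{claim:singleirr_Ext1} and Lemma \ref{lemma:singleirr_jumplocus}, $\dim \Ext^1(E,E) = n$ at such a point (one more than generically, because the kernel $K$ jumps). So I would compute the hull of $\Def_E$ explicitly, working (as in Lemma \ref{lemma:singlelocal_hullE}) on the local model $X = \Tot \O_{\P^1}(-n)$ with the \v{C}ech cover $\{U_1, U_2\}$, writing down a basis of $H^1(\Tot_{TW}(L^\Delta))$ adapted to the triangle $\O_C(-1)[1] \to E \to \O_C \xrightarrow{\xi} \ast$ and evaluating the primary obstruction $\kappa_2\colon H^1 \times H^1 \to H^2$ via the Lie bracket formula \eqref{eq:semiDGLA_commutator}.

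The expected outcome is that the hull is a reduced ring — concretely, after the change of coordinates moving $\xi$ to a normal form, the relations should again be the $2\times 2$ minors of a Hankel-type matrix (the ideal of a rational normal curve cone, or a blow-up chart thereof), which is prime. Once each local ring is identified as reduced, I would combine this with the map $S \sqcup_C Q \to M_+$ from Subsection \ref{subsec:singleirr}: it is proper and bijective on closed points and injective on tangent vectors, and now that $M_+$ is known to be reduced, Lemma \ref{lemma:critiso_main} upgrades it to an isomorphism, simultaneously finishing Proposition \ref{prop:single_main}. Alternatively — and this is perhaps cleaner for just the reducedness claim — one checks directly that at every point of $Z$ the tangent space has the predicted dimension $n$ and that $\C[[\Ext^1(E,E)]]/J_2$ (quotient by the primary obstruction only) is already reduced of that dimension, so $J = J_2$ and no higher obstructions intervene, via Corollary \ref{cor:versal_quadratic}.

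The main obstacle will be the explicit obstruction computation at the jumping locus $Z$: one must carry out the \v{C}ech/Thom--Whitney bookkeeping for the extension $E$ (rather than for the polystable $\O_C \oplus \O_C(-1)[1]$ of Lemma \ref{lemma:singlelocal_hullE}), track how the extra dimension in $\Ext^1(E,E)$ along $Z$ interacts with $\Ext^2(E,E)$, and confirm that the quadratic part of the hull equation has no embedded or nilpotent behaviour — i.e. that $\kappa_2$ has the expected rank so that $\ker\kappa_2$ cuts out a reduced germ and Corollary \ref{cor:versal_quadratic} applies with $d = 3$. Verifying the hypothesis $I \cap \m_S^3 \subseteq \m_S I$ needed for that corollary is the delicate point, since it is exactly what rules out higher-order corrections to the quadratic model.
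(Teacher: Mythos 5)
Your proposal follows essentially the same route as the paper's (sketched) proof: reduce to the jumping locus $Z$, build a locally free resolution of the extension $E$ on the local model $\Tot \O_{\P^1}(-n)$ by replacing the middle term of the resolution by a nontrivial extension bundle, compute the \v{C}ech/Thom--Whitney DGLA and its primary obstruction, and conclude via Corollary \ref{cor:versal_quadratic} --- exactly the method the paper attributes to \cite{TX22}*{\textsection 8} and carries out ``in the same spirit as Lemma \ref{lemma:singlelocal_hullE}.'' The only (inessential) slip is in your guess at the answer: at a point of $Z$ the germ should be the transverse union of the smooth surface $S$ and $\P^{n-1}$ along $C$, cut out by products of linear forms, rather than a Hankel determinantal ideal, which instead appears for $M_0$ at the singular point of $T$.
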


Note that this was proven in \cite{TX22}*{\textsection 8} by appealing to Corollary \ref{cor:versal_quadratic}, together with a computation of the primary obstruction map. As such, we will only give a sketch of our argument; a similar computation will be performed in Subsection \ref{subsec:interlocal}.

\begin{proof}[Proof of Proposition \ref{prop:singlelocal_reduced}] (Sketch) 
We have
\[ \Ext^1(\O_C, \O_C(-1)[1]) \cong \Ext^1(\O_X(-C), \O_X(-L)). \]
This way, given an extension $\O_C(-1)[1] \to F \to \O_C \to \O_C(-1)[1]$, the corresponding class in $\Ext^1(\O_X(-C), \O_X(-L))$ allows us to construct a resolution of $F$ by locally free sheaves, replacing the middle term of \eqref{eq:singlelocal_res} by a non-trivial extension. We then compute the Thom--Whitney totalization and the hull, in the same spirit as the proof of Lemma \ref{lemma:singlelocal_hullE}.
\end{proof}

\section{Multiple disjoint curves} \label{sec:disjoint}

In the previous section we carefully analyzed what happens when $f\colon S \to T$ contracts a single rational curve of self-intersection $(-n)$. The goal of this section is to understand what happens when $f$ contracts many disjoint curves.

We consider the following setup. Let $S$ be a smooth, projective surface, and let $f\colon S \to T$ be a birational morphism $f\colon S \to T$, where $T$ is a normal, projective surface. We assume that $\Exc(f) = C_1 \cup \dots \cup C_r$, where each $C_i$ is a smooth, rational curve with self-intersection $-n_i$. We assume that the curves $C_i$ are pairwise disjoint, and that $n_i \geq 3$.

\begin{remark}
Given integers $n_1, \dots, n_r$ with $n_i \geq 3$, there always exist a surface $S$ satisfying the previous assumption. For example, take a normal, projective surface $\overline{T}$ containing a single singular point: a cyclic quotient singularity associated to the Hirzebruch--Jung continued fraction $[n_1, 2, n_2, 2, \dots, n_{r-1}, 2, n_r]$. Consider its minimal resolution $S \to \overline{T}$, and pick every other curve in the exceptional divisor. This gives us curves $C_1, \dots, C_r$ satisfying the requirements.
\end{remark}

Fix $\eta \in \Amp(T)_\Q$ and $\beta \in \NS(S)_\Q$ satisfying\footnote{For example, take $\beta=\sum_{i=1}^r (1/2 + 1/(2n_i))C_i$.} $-1 <\beta.C_i - n_i/2 <0$. We denote by $\overline{\sigma}_{\beta, f^\ast\eta}$ the stability condition obtained from Theorem \ref{teo:limits_all}. Lastly, denote by $v=[pt]$ the class of a point.

\begin{prop} \label{prop:disjoint_main}
Under the previous assumptions, there are $r$ walls $W_1, \dots, W_r$ for $v$ passing though $\overline{\sigma}_{\beta, f^\ast\eta}$. These are transversal, hence there are $2^r$ chambers around $\overline{\sigma}_{\beta, f^\ast\eta}$. Label each chamber $\Cchamb_I$ by a subset $I \subset \{1, \dots, r\}$, where $i \in I$ if $\Cchamb_I$ and the geometric chamber lie on different sides of $W_i$.

Lastly, let $\sigma$ be a stability condition on the chamber $\Cchamb_I$. The moduli space of $\sigma$-stable objects with numerical class $v$ is isomorphic to $S \cup \bigcup_{i \in I} \P^{n_i-1}$, where $C_i \subset S$ is glued with a rational normal curve in $\P^{n_i-1}$, and there are no other identifications. 
\end{prop}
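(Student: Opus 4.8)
The plan is to follow the same three-step template used for a single curve (Subsections \ref{subsec:singless}, \ref{subsec:singleirr}, \ref{subsec:singlelocal}), but now exploiting the disjointness of the $C_i$ to decouple all the computations. First I would determine the $\overline{\sigma}_{\beta, f^\ast\eta}$-polystable objects of class $v=[pt]$: by Proposition \ref{prop:limits_ssfactors} (with each component a length-one chain $C_i$ and $k_i=0$ from the condition $-1<\beta.C_i-n_i/2<0$), these are $\O_x$ for $x\notin\Exc(f)$, which are stable, and the polystable objects $\O_{C_i}\oplus\O_{C_i}(-1)[1]$ for each $i$. Applying Theorem \ref{teo:wc_main}, each strictly semistable object $\O_{C_i}\oplus\O_{C_i}(-1)[1]$ contributes one numerical wall $W_i$ given by the preimage of $\{Z:\Re Z(\O_{C_i})\Im Z(v)=\Re Z(v)\Im Z(\O_{C_i})\}$; since $\O_{C_i}$ and $\O_{C_j}$ have disjoint support, the linear functionals cutting out $W_i$ and $W_j$ are independent, so the walls are transversal and cut out $2^r$ chambers. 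As in Section \ref{sec:single}, for a stability condition $\sigma$ in $\Cchamb_I$ the $\sigma$-stable objects of class $v$ are $\{\O_x:x\notin\Exc(f)\}$ together with, for each $i\in I$, the nontrivial extensions $\O_{C_i}(-1)[1]\to E\to\O_{C_i}\xrightarrow{\xi}\O_{C_i}(-1)[2]$ with $\xi\in\Ext^2(\O_{C_i},\O_{C_i}(-1))\setminus\{0\}$, an $(n_i-1)$-dimensional projective space worth of them, while for $i\notin I$ the corresponding point $x\in C_i$ just gives back $\O_x$.

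Next I would assemble the global picture of $M_\sigma(v)$ by crossing the walls one at a time, starting from the geometric chamber where $M\cong S$ with universal family $\O_\Delta$. At each wall $W_i$ the locus in $S$ that is destabilized is exactly the curve $C_i$ (because $\O_{C_i}$ maps nontrivially to $\O_x$ only when $x\in C_i$, by support), and since the $C_i$ are disjoint these destabilizing loci never meet, so the wall-crossings are genuinely independent of one another. For each $i\in I$ I would run the machinery of Section \ref{sec:irrviaWC}: the assumptions \textbf{A1}--\textbf{A6} hold here exactly as in the single-curve case (with $M_u=M_w=\{[\O_{C_i}]\}$ a reduced point, so $Z$ is a point and $P=\P\Ext^1(\O_{C_i}(-1)[1],\O_{C_i})\cong\P^1$ embedded as $C_i\subset S$; the rank of $H^1(\E)$ is constant since $M_u\times M_w$ is a point). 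The elementary modification along $C_i\subset S$ leaves $S$ unchanged (as $C_i$ is an effective Cartier divisor) and produces the extensions above, while the bundle side $Q_i=\P\Ext^2(\O_{C_i},\O_{C_i}(-1))\cong\P^{n_i-1}$ carries its own universal extension. Exactly as in Subsection \ref{subsec:singleirr}, the injectivity of $N_{C_i/S}\to\Ext^2(\O_{C_i},\O_{C_i}(-1))$ (the bottom-right map of \eqref{eq:singleirr_preem} has rank one), Lemma \ref{lemma:singleirr_jumplocus}, and Claim \ref{claim:singleirr_gluedlocus} identify the overlap $Z_i\subset Q_i$ with the image of $C_i\subset S$, embedded in $\P^{n_i-1}$ as a rational normal curve. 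Performing this for all $i\in I$ simultaneously (the constructions are supported on disjoint neighbourhoods of the $C_i$) yields a proper bijective-on-closed-points map $S\cup\bigcup_{i\in I}\P^{n_i-1}\to M_\sigma(v)$, where $S$ is glued to the $i$th $\P^{n_i-1}$ along the rational normal curve and there are no further identifications.

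Finally I would pin down the scheme structure, which is where the only real content beyond bookkeeping lies: I must show $M_\sigma(v)$ is reduced so that Lemma \ref{lemma:critiso_main} upgrades the bijective map above to an isomorphism. For the points $[\O_x]$ with $x\notin\Exc(f)$ the deformations are unobstructed. For a point $E$ lying on some $\P^{n_i-1}$, the object $E$ is supported on $C_i$, so by \cite{TX22}*{Lemma 7.4} its deformation functor is computed in the local model $X=\Tot(\O_{\P^1}(-n_i))$; this is precisely the computation of Proposition \ref{prop:singlelocal_reduced} (and the sketch there, to be fleshed out in Subsection \ref{subsec:interlocal}), giving that $\hat\O_{M_\sigma(v),E}$ is regular, of dimension $n_i-1$, along the open part of $\P^{n_i-1}$ and, at the points of the rational normal curve $C_i$, has the same local ring as the gluing $S\sqcup_{C_i}\P^{n_i-1}$ — a reduced ring. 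Since the relevant DGLA $R\Hom(E,E)$ only sees a neighbourhood of $C_i$, the presence of the other curves $C_j$, $j\neq i$, does not affect this local analysis at all. Thus $M_\sigma(v)$ is reduced, and Lemma \ref{lemma:critiso_main} finishes the proof.

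\medskip\noindent\textbf{Expected main obstacle.} The bulk of the verification — semistable objects, wall transversality, and the global gluing — is a routine disjoint-union upgrade of Section \ref{sec:single}. The one step requiring genuine work is the local structure computation establishing reducedness at the points of the rational normal curves $C_i$: one needs the explicit Thom--Whitney/\v{C}ech DGLA model of $R\Hom(E,E)$ for a non-split extension $E$ on the local surface $\Tot(\O_{\P^1}(-n_i))$, together with the primary-obstruction computation feeding into Corollary \ref{cor:versal_quadratic}. This is exactly the kind of calculation carried out (in the harder two-curve case) in Subsection \ref{subsec:interlocal}, so I would either invoke it directly or reproduce the $n=n_i$ specialization of Lemma \ref{lemma:singlelocal_hullE} and Proposition \ref{prop:singlelocal_reduced}.
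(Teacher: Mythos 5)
Your proposal is correct and follows essentially the same route as the paper: determine the polystable objects via Proposition \ref{prop:limits_ssfactors}, read off the $r$ walls and check transversality (the paper does this by restricting to the $(\epsilon_1,\dots,\epsilon_r)$-slice, where the walls become $\epsilon_i=0$), and then use the disjointness of the supports of the $C_i$ to decouple the wall-crossings so that the single-curve analysis of Section \ref{sec:single} — including the local reducedness computation — applies in a Zariski neighbourhood of each $C_i$ independently. The paper's write-up is terser (it simply observes that objects on $\P^{n_i-1}$ admit no maps from $\O_{C_j}(-1)[1]$ for $j\neq i$ and concludes that the description of Section \ref{sec:single} applies locally), but the content is the same.
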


The proof of Proposition \ref{prop:disjoint_main} will take the remainder of this section. We will start by computing the walls for $v$ passing through $\overline{\sigma}_{\beta, f^\ast\eta}$. The same argument of Subsection \ref{subsec:singless} shows that the $\overline{\sigma}_{\beta, f^\ast\eta}$-polystable objects are 
\[ \{ \O_x: x \in S \setminus (C_1 \cup \dots \cup C_r) \} \cup \{ \O_{C_i} \oplus \O_{C_i}(-1)[1] : 1 \leq i \leq r \}. \]
Each of the objects $\O_{C_i} \oplus \O_{C_i}(-1)[1]$ defines a numerical wall $W_i$, which can be computed as the preimage of 
\begin{equation} \label{eq:disjoint_wall}
\{Z \in \Hom(\Lambda, \C) : \Re Z(\O_{C_i}) \cdot \Im Z([pt]) = \Re Z([pt]) \cdot \Im Z(\O_{C_i}) \}
\end{equation}
under the projection $\Stab(S) \to \Hom(\Lambda, \C)$. This way, to show that the walls $W_i$ are transversal, we can compute it directly in $\Hom(\Lambda, \C)$. This is a straightforward computation.

\begin{remark} \label{remark:disjoint_slice}
Given $\epsilon_1, \dots, \epsilon_r$ small real numbers, we let $\omega = f^\ast\eta + \epsilon_1 C_1 + \dots + \epsilon_r C_r$. Let $\sigma_\epsilon$ be the deformation of $\overline{\sigma}_{\beta, f^\ast\eta}$ with central charge $Z_{\beta, \omega}$. This defines an $r$-dimensional manifold inside $\Stab(S)$, containing $\overline{\sigma}_{\beta, f^\ast\eta}$ in its interior.

Note that for $\epsilon_1, \dots, \epsilon_r <0$ small enough, we have that $\omega$ is ample. Thus, we have that $\sigma_\epsilon$ agrees with the Arcara--Bertram stability condition $\sigma_{\beta, \omega}$. This immediately describes the geometric chamber. 

With this in mind, we can compute the intersection of $W_i$ with this submanifold, by using \eqref{eq:disjoint_wall}. It turns out that they are described by the equations $\epsilon_i =0$. This gives an alternative way to verify the transversality.
\end{remark}

From the transversality of the walls $W_i$ we get that they determine $2^r$ regions of $\Stab(S)$ around $\overline{\sigma}_{\beta, f^\ast\eta}$. This way, to each chamber $\Cchamb$ we assign the subset $I \subset \{1, \dots, r\}$, where $i \in I$ if $\Cchamb_I$ and the geometric chamber lie in different sides of the wall $W_i$. We point out that the geometric chamber is assigned the label $\varnothing$.

\begin{remark}
In the notation of Remark \ref{remark:disjoint_slice}, we have that $\sigma_\epsilon \in \Cchamb_I$ if $\epsilon_i \neq 0$ for all $i$, and $I = \{1 \leq i \leq r: \epsilon_i >0\}$.
\end{remark}

To finish the proof, we need to compute the moduli space $M_I = M_{\sigma}([\O_{pt}])$, where $\sigma \in \Cchamb_I$. The case $I=\varnothing$ is part of Proposition \ref{prop:AB_moduli}, while for $\# I = 1$ it is a direct consequence of the discussion in Section \ref{sec:single}. 

Now, recall that the objects parametrized by $\P^{n_i-1}$ do not admit maps from $\O_{C_j}(-1)[1]$ for $j \neq i$, as they have disjoint support. In other words, when we cross the wall $W_j$, only objects over $C_j \subset S$ (and $\P^{n_j-1}$) can be destabilized. This way, the modifications performed by $W_j$ do not affect the irreducible component $\P^{n_i-1}$, nor the surface $S$ in a Zariski open neighborhood of $C_i$. This immediately proves the claim about $M_I = S \cup \bigcup_{i \in I} \P^{n_i-1}$, as the description of Section \ref{sec:single} applies locally. The same argument gives the description at the walls.

\section{Two intersecting curves} \label{sec:inter}

The goal of this section is to study what happens when $f\colon S \to T$ contracts a chain of two rational curves to a single point. We let $S$ be a smooth, projective surface, $f\colon S \to T$ a birational morphism to a normal, projective surface. We assume that $\Exc(f) = C_1 \cup C_2$, where each $C_i$ is a smooth, rational curve of self-intersection $-n_i \leq -3$. We also assume that $C_1$ and $C_2$ intersect at a single point. 

Pick $\eta \in \Amp(T)_\Q$, and $\beta \in \NS(S)_\Q$ satisfying the conditions
\[ -1 < \beta.C_i -n_i/2 <0, \quad \beta.(C_1+C_2) - (n_1+n_2)/2 <-1. \]
We let $\overline{\sigma}_{\beta, f^\ast\eta}$ be the stability condition given by Theorem \ref{teo:limits_all}. Set $v=[pt]$.

\begin{prop} \label{prop:inter_main}
There are three walls $W_1, W_2, W_{12}$ for $v$ passing through $\overline{\sigma}_{\beta, f^\ast\eta}$. These walls are \emph{not} transversal, and they divide $\Stab(S)$ around $\overline{\sigma}_{\beta, f^\ast\eta}$ into six chambers. Label the chambers $\Cchamb_1, \dots, \Cchamb_6$ so that $\Cchamb_1$ is the geometric chamber, that $\Cchamb_i, \Cchamb_{i+1}$ share a wall, with $W_1$ being between $\Cchamb_1$ and $\Cchamb_2$.

Denote by $M_i$ the moduli space $M_\sigma(v)$ for some $\sigma \in \Cchamb_i$. We have that $M_1=S$, $M_2 = S \sqcup_{C_1} \P^{n_1-1}$, where $C_1 \subset \P^{n_1-1}$ is embedded as a rational normal curve. We also have that $M_3 = S \cup \Bl_{pt} \P^{n_1-1} \cup \P^{n_1+n_2-3}$ has three irreducible components, glued as follows. The exceptional divisor of $\Bl_{pt} \P^{n_1-1}$ glues inside $\P^{n_1+n_2-1}$ as a linear $\P^{n_1-2}$; $C_2 \subset S$ glues as a rational normal curve in a complementary $\P^{n_2-1}$; and $C_1 \subset S$ glues along $\Bl_{pt}\P^{n_1-1}$ at the strict transform of a rational normal curve in $\P^{n_1-1}$.

Finally, the same description applies symmetrically to $\Cchamb_6$ and $\Cchamb_5$.
\end{prop}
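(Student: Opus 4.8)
The argument follows the template of Sections~\ref{sec:single} and~\ref{sec:disjoint}: identify the $\overline{\sigma}_{\beta,f^\ast\eta}$-polystable objects of class $v=[pt]$, read off the walls, and cross them one at a time using the bundle of extensions of Subsection~\ref{subsec:bundle}, the elementary modification of Subsection~\ref{subsec:em}, the isomorphism criterion of Subsection~\ref{subsec:critiso}, and the DGLA computations of Section~\ref{sec:localstr}. Since $-1<\beta.C_i-n_i/2<0$ the integers of Theorem~\ref{teo:limits_all}(5) are $k_1=k_2=0$, so by Proposition~\ref{prop:limits_ssfactors} the $\overline{\sigma}_{\beta,f^\ast\eta}$-polystable objects of class $v$ are the $\O_x$ with $x\notin C_1\cup C_2$ (all stable) together with the unique strictly polystable object $A=\O_{C_1\cup C_2}\oplus\O_{C_1}(-1)[1]\oplus\O_{C_2}(-1)[1]$. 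Writing $u_0=[\O_{C_1\cup C_2}]$, $u_1=[\O_{C_1}(-1)[1]]$, $u_2=[\O_{C_2}(-1)[1]]$, Riemann--Roch gives $v=u_0+u_1+u_2$, $u_0+u_2=[\O_{C_1}]$ and $u_0+u_1=[\O_{C_2}]$; since $\ch([pt])=(0,0,1)$ we have $Z_{\beta,\omega}([pt])=-1\in\R$, so the wall formula \eqref{eq:wc_wallformula} becomes $\omega.\ch_1^\beta(u)=0$, and the distinct sub-sums of $\{u_0,u_1,u_2\}$ yield precisely the three numerical walls $W_1=\{\omega.C_1=0\}$, $W_2=\{\omega.C_2=0\}$, $W_{12}=\{\omega.(C_1+C_2)=0\}$, all passing through $\overline{\sigma}_{\beta,f^\ast\eta}$ because $f^\ast\eta.C_i=0$; local finiteness together with this classification shows there are no other $v$-walls nearby. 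Working as in Remark~\ref{remark:disjoint_slice} on the slice $\omega=f^\ast\eta+\epsilon_1C_1+\epsilon_2C_2$, the three walls become lines through the origin in the $(\epsilon_1,\epsilon_2)$-plane of slopes $n_1$, $1/n_2$ and $-(n_1-1)/(n_2-1)$, which are pairwise distinct since $n_i\geq 3$; hence near $\overline{\sigma}_{\beta,f^\ast\eta}$ the walls $W_1,W_2,W_{12}$ are three distinct codimension-one submanifolds through a common point -- not transversal -- cutting $\Stab(S)$ into six regions. The geometric chamber is the single sector $\{\omega.C_1>0\}\cap\{\omega.C_2>0\}=:\Cchamb_1$; ordering $\Cchamb_1,\dots,\Cchamb_6$ cyclically one checks the walls crossed are in order $W_1,W_{12},W_2,W_1,W_{12},W_2$, and the involution $C_1\leftrightarrow C_2$ fixes $\Cchamb_1,\Cchamb_4$ and swaps $\Cchamb_2\leftrightarrow\Cchamb_6$, $\Cchamb_3\leftrightarrow\Cchamb_5$, which gives the last sentence of the statement once $M_3$ is known.

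On $\Cchamb_1$ the class $\omega$ is ample, so $\sigma$ is an Arcara--Bertram stability condition and $M_1\cong S$ with universal family $\O_\Delta$ by Proposition~\ref{prop:AB_moduli}. At a generic point of $W_1$ adjacent to $\Cchamb_1$ the class $\omega$ is the pullback of an ample class from the surface contracting only $C_1$, and the hypothesis $-1<\beta.C_1-n_1/2<0$ is exactly the Section~\ref{sec:single} hypothesis for $C_1$; hence crossing $W_1$ is the single-curve wall-crossing of Proposition~\ref{prop:single_main}. The destabilized locus $P\subset M_1=S$ is $C_1$, the blow-up is trivial, and the new objects over $C_1$ are the extensions $\O_{C_1}(-1)[1]\to E\to\O_{C_1}$ parametrized by $\P^{n_1-1}=\P\Ext^2(\O_{C_1},\O_{C_1}(-1))$. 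The irreducible-component analysis of Subsection~\ref{subsec:singleirr} and the reducedness and local computations of Subsection~\ref{subsec:singlelocal} apply verbatim on a neighbourhood of $C_1\setminus(C_1\cap C_2)$, whose local model is again $\Tot\O_{\P^1}(-n_1)$; since the destabilized objects and the new extensions depend only on $C_1$ and its normal bundle $\O_{C_1}(-n_1)$, not on $C_2$, a short additional DGLA computation at the node $C_1\cap C_2$ (of the kind carried out in Subsection~\ref{subsec:interlocal}) shows the conclusion persists there, so $M_2\cong S\sqcup_{C_1}\P^{n_1-1}$ with $C_1$ a rational normal curve. By the $C_1\leftrightarrow C_2$ symmetry, crossing $W_2$ from $\Cchamb_1$ gives $M_6\cong S\sqcup_{C_2}\P^{n_2-1}$.

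The main new step is crossing $W_{12}$ from $\Cchamb_2$, where the relevant decomposition of $v$ is $v=u_0+(u_1+u_2)$. Arguing as in the proof of Proposition~\ref{prop:limits_ssfactors} (via \cite{Vil25}*{Lemmas 5.8, 5.10}) one checks that the only $\sigma$-semistable objects of classes $u_0$ and $u_1+u_2$ are, up to relabelling $C_1,C_2$, the stable objects $\O_{C_1\cup C_2}$ and $G=\O_{C_1\cup C_2}(0,-1)[1]$, so $M_{u_0},M_{u_1+u_2}$ are points and assumptions \textbf{A1}--\textbf{A6} of Section~\ref{sec:irrviaWC} are readily verified. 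The objects of $M_2$ related to $\O_{C_1\cup C_2}$ by a morphism of the right numerical type are the $\O_x$ with $x\in C_2$ (inside the $S$-component) and the single extension over the point $\tilde x_0\in\P^{n_1-1}$ corresponding to the node, so the destabilized locus is $P=C_2\subset S$, meeting the $\P^{n_1-1}$-component only at $\tilde x_0$. Blowing up each component of $M_2$ along $P$ leaves $S$ unchanged (as $C_2$ is Cartier) and replaces $\P^{n_1-1}$ by $\Bl_{pt}\P^{n_1-1}=\Bl_{\tilde x_0}\P^{n_1-1}$, glued to $S$ along the strict transform of the rational normal curve $C_1$; this produces the first two components of $M_3$. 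The elementary modification of Subsection~\ref{subsec:em} replaces the objects over $C_2$ and over the exceptional $\P^{n_1-2}$ by extensions of $\O_{C_1\cup C_2}$ by $G$, and in parallel the universal extension over $Q=\P\Ext^2(\O_{C_1\cup C_2},\O_{C_1\cup C_2}(0,-1))$ (the classifying space of triangles $G\to E\to\O_{C_1\cup C_2}$) defines a map $Q\to M_3$ by Corollary~\ref{cor:em_newfamily} and Subsection~\ref{subsec:singleirr}. Serre duality gives $\Ext^2(\O_{C_1\cup C_2},\O_{C_1\cup C_2}(0,-1))\cong H^0(\O_{C_1\cup C_2}(n_1-2,n_2-1))^\vee$, so $\dim Q=n_1+n_2-3$, and the two restriction maps, to $C_1$ and to $C_2$, exhibit inside $Q=\P^{n_1+n_2-3}$ a linear $\P^{n_1-2}$ and a complementary linear $\P^{n_2-1}$ meeting in a point, the rational normal curve $C_2$ lying in the latter through that point. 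The jump-locus computation of Lemma~\ref{lemma:singleirr_jumplocus}, repeated for the pair $(\O_{C_1\cup C_2},G)$ with an analogous matrix in the coordinates of the two components, identifies the locus where $Q\to M_3$ is not an open embedding with $\P^{n_1-2}\cup C_2$ and matches it with the exceptional divisor of $\Bl_{\tilde x_0}\P^{n_1-1}$ and with $C_2\subset S$ respectively. Assembling, the three resulting maps to $M_3$ give a proper surjection from the scheme glued as in the statement, bijective on closed points; the tangent-space computations of Subsections~\ref{subsec:embbundle} and~\ref{subsec:em} make it injective on tangent vectors.

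To upgrade this to an isomorphism via Lemma~\ref{lemma:critiso_main} it remains to show $M_3$ is reduced at the new points. As all the relevant objects are supported on $C_1\cup C_2$, one passes (by the completion lemma of Subsection~\ref{subsec:singlelocal}) to the toric model glued from three affine charts with $C_1\cup C_2$ a chain of two toric lines meeting at a fixed point; taking $\Aut$-equivariant locally free resolutions of $\O_{C_1\cup C_2}$, of $G$, and of the extensions $G\to E\to\O_{C_1\cup C_2}$, one forms the \v{C}ech semicosimplicial DGLA of Subsection~\ref{subsec:semiDGLA}, computes the cohomology of its Thom--Whitney totalization via $R\Hom(E,E)$, writes explicit cocycle representatives for a basis of $\Ext^1(E,E)$ together with their brackets, and extracts the hull following Subsection~\ref{subsec:versal}; Corollary~\ref{cor:versal_quadratic} reduces the verification to showing the primary obstruction cuts out a reduced quadratic cone, and combined with Lemma~\ref{lemma:DGLAmoduli_gms} this gives $\hat{\O}_{M_3,p}$ reduced at every new $p$. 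Lemma~\ref{lemma:critiso_main} then yields the isomorphism for $M_3$, and exchanging $C_1,C_2$ gives the description of $M_5$. The hard part is precisely this last circle of ideas: pinning down $G$ and the exact destabilized locus, tracking the gluing of the large component $\P^{n_1+n_2-3}$ through the jump-locus analysis, and -- above all -- carrying out the explicit DGLA computation at the node, where the chain structure (as opposed to two single-curve pictures glued away from the node) first enters and where the reducedness of $M_3$ must be established by hand.
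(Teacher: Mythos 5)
Your proposal tracks the paper's argument closely and correctly through most of its length: the identification of the polystable objects via Proposition \ref{prop:limits_ssfactors} (with $k_1=k_2=0$), the three numerical walls $\{\omega.C_1=0\}$, $\{\omega.C_2=0\}$, $\{\omega.(C_1+C_2)=0\}$ and their slopes on the $(\epsilon_1,\epsilon_2)$-slice, the six chambers and the cyclic wall pattern, the reduction of $M_2$ to the single-curve analysis of Section \ref{sec:single}, the decomposition $v=u_0+(u_1+u_2)$ at $W_{12}$ with $G=\O_{C_{12}}(0,-1)[1]$, the destabilized locus $C_2\subset S$ meeting $\P^{n_1-1}$ in one point, the three components $S$, $\Bl_{pt}\P^{n_1-1}$, $\P^{n_1+n_2-3}$, and the jump-locus computation identifying the gluing (this is exactly Lemma \ref{lemma:interirr_comprank} and Remark \ref{remark:interirr_identification}). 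All of this matches the paper.

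There is, however, a genuine gap at the step you yourself single out as the crux: the reducedness of $M_3$ at the triple point $p\in S\cap\Bl_{pt}\P^{n_1-1}\cap\P^{n_1+n_2-3}$. You propose to conclude via Corollary \ref{cor:versal_quadratic}, i.e.\ by showing the primary obstruction cuts out the right (reduced) cone. This cannot work at that point. The hull there is
\[ \C[[p_1,\dots,p_{n_1-2},q_1,\dots,q_{n_2-1},r]]/(p_1q_1r,\dots,p_{n_1-2}q_1r,\ q_2r,\dots,q_{n_2-1}r), \]
whose ideal requires the \emph{cubic} generators $p_iq_1r$; the primary obstruction, being quadratic, detects only the quadrics $q_jr$ ($j\geq 2$) — this is precisely the content of $J_2=\m_S^3+(q_2r,\dots,q_{n_2-1}r)$ in Lemma \ref{lemma:interirr_order3}. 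Moreover the hypotheses of Corollary \ref{cor:versal_quadratic} fail outright: $p_1q_1r\in I\cap\m_S^3$ but $p_1q_1r\notin\m_S I$, and $I+\m_S^3$ is strictly smaller than what $I$ generates. The paper's proof must therefore push the obstruction calculus one order further: it constructs an explicit second-order lift $\xi_2$ (equation \eqref{eq:interirr_xi2}), computes the degree-three obstruction $d\tilde\xi_2+\tfrac12[\tilde\xi_2,\tilde\xi_2]$ to obtain $J_3$, and only then applies Proposition \ref{prop:versal_stopcriterion} with $d=4$ (Corollary \ref{cor:interirr_reducedugly}). The quadratic criterion does suffice at every \emph{other} point of $M_3$ (Remark \ref{remark:interirr_restofpts}), but at the node — exactly where the chain structure first enters, as you note — your prescribed tool is insufficient and the third-order computation is unavoidable.
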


The proof of Proposition \ref{prop:inter_main} will be divided into three steps. First, we will determine the semistable objects and walls in Subsection \ref{subsec:interwalls}. We will then use this to determine the irreducible components of the corresponding moduli spaces in Subsection \ref{subsec:interirr}. Finally, we will describe the local structure in Subsection \ref{subsec:interlocal}.

\subsection{Walls} \label{subsec:interwalls}

Let us start by computing the walls for $v$ passing through $\overline{\sigma}_{\beta, f^\ast\eta}$. From Proposition \ref{prop:limits_ssfactors}, we have that the polystable objects with numerical class $v$ are
\[ \{\O_x : x \notin C_1 \cup C_2\} \cup \{ \O_{C_{12}}(k_1, k_2) \oplus \O_{C_1}(k_1-1)[1] \oplus \O_{C_2}(k_2-1)[1] \}. \]
The object on the right define three walls, which we will denote respectively by $W_{12}, W_1, W_2$. As before, we can compute the walls in $\Hom(K^{\text{num}}(S), \C)$. We get that these walls are \emph{not} transversal, and instead define six chambers around $\overline{\sigma}_{\beta, f^\ast\eta}$. 

Let us mimic the construction in Remark \ref{remark:disjoint_slice}. Given $\epsilon_1, \epsilon_2$ small real numbers, we let $\sigma_{\epsilon_1, \epsilon_2}$ be the deformation of $\overline{\sigma}_{\beta, f^\ast\eta}$ with central charge $Z_{\beta, f^\ast\eta+\epsilon_1C_1+\epsilon_2C_2}$.

In this case, we swiftly verify that the restriction of the walls to the $(\epsilon_1, \epsilon_2)$-plane have equations $W_1 = \{ -\epsilon_1n_1+\epsilon_2 =0 \}$, $W_2 = \{ \epsilon_1 - n_2\epsilon_2 = 0 \}$, and $W_{12} = \{ \epsilon_1(n_1-1) + \epsilon_2(n_2-1)=0 \}$, thanks to the formula \eqref{eq:wc_wallformula}. It is worth pointing out that the first two equations are related to the ample cone: given $\epsilon_1, \epsilon_2$ small, we have that $f^\ast\eta + \epsilon_1 C_1 + \epsilon_2 C_2$ is ample if and only if $-\epsilon_1 n_1 + \epsilon_2 >0$ and $\epsilon_2 - n_2 \epsilon_1 >0$.

We have depicted the three walls, together with the geometric chamber, in Figure \ref{fig:interwalls_chambers}. Note that we have also labelled the chambers as in Proposition \ref{prop:inter_main}. 

\begin{figure}[htbp]
\centering
\begin{tikzpicture}[scale=0.8]
\fill[black!15!white] (-2.25,-1.125) -- (0,0) -- (-1.125,-2.25) -- (-2.25,-2.25);
\draw (1.125,2.25) -- (-1.125,-2.25) (2.25,1.125) -- (-2.25,-1.125) (2.25,-2.25) -- (-2.25,2.25);
\draw[dotted, ->] (-2.5,0) -- (2.5,0);
\draw[dotted, ->] (0,-2.5) -- (0,2.5);
\node at (2.75,0) {$\epsilon_1$};
\node at (0,2.75) {$\epsilon_2$};
\node at (-1.125,-2.6) {$W_1$};
\node at (-2.6,-1.125) {$W_2$};
\node at (-2.6,2.6) {$W_{12}$};
\node at (-1.2,-1.2) {$\Cchamb_1$};
\node at (0.25,-1.5) {$\Cchamb_2$};
\node at (1.25,-0.5) {$\Cchamb_3$};
\node at (1.2,1.2) {$\Cchamb_4$};
\node at (-0.5,1.25) {$\Cchamb_5$};
\node at (-1.5,0.25) {$\Cchamb_6$};
\end{tikzpicture}
\caption{Walls of Proposition \ref*{prop:inter_main}.}
\label{fig:interwalls_chambers}
\end{figure}

\begin{remark}
Note that the moduli spaces $M_2$ is isomorphic to $S \sqcup_{C_1} \P^{n_1-1}$, while $M_6$ is isomorphic to $S \sqcup_{C_2} \P^{n_2-1}$. This follows from the description in Section \ref{sec:single}.
\end{remark}

\subsection{Irreducible components for \texorpdfstring{$M_3$}{M3}} \label{subsec:interirr}

In this subsection we will compute the irreducible components of the moduli space $M_3$. To do so, we will use the fact that $M_2 \cong S \sqcup_{C_1} \P^{n_1-1}$. Recall here that $S \setminus C_1$ parametrizes skyscraper sheaves $\{ \O_x: x \notin C_1 \}$, while $\P^{n_1-1}$ parametrizes extensions 
\begin{equation} \label{eq:interirr_extn1}
\O_{C_1}(-1)[1] \to E \to \O_{C_1} \to \O_{C_1}(-1)[2].
\end{equation}
Recall also that the wall $W_{12}$ is described by the destabilizing subobject $\O_{C_{12}}$, the structure sheaf of the curve $C_{12}=C_1 \cup C_2$. 

We start our journey by looking at which objects of $M_2$ admit a map from $\O_{C_{12}}$. For the skyscraper sheaves parametrized by $S \setminus C_1$, this is clear: an object $\O_x$ admits a non-zero map $\O_{C_{12}} \to \O_x$ if and only if $x \in C_2 \setminus C_1$. In that case, the object $\O_x$ fits into a short exact sequence $0 \to \O_{C_{12}} \to \O_x \to \O_{C_{12}}(0, -1)[1] \to 0$.

\begin{remark}
Note that $\O_{C_{12}}(0, -1)[1]$ is an extension
\[ 0 \to \O_{C_1}(-1)[1] \to \O_{C_{12}}(0, -1)[1] \to \O_{C_2}(-1)[1] \to 0. \]
For $\sigma$ in the chamber $\Cchamb_2$, this object is $\sigma$-stable. Here $\Ext^1(\O_{C_2}(-1), \O_{C_1}(-1))$ is one-dimensional, so this extension is unique (up to isomorphism).
\end{remark}

It remains to determine which objects parametrized by $\P^{n_1-1} \subset M_2$ admit a map from $\O_{C_{12}}$. To do so, we take an extension as in \eqref{eq:interirr_extn1} and apply $\Hom(\O_{C_{12}}, -)$. We get the exact sequence
\[ 0 \to \Hom(\O_{C_{12}}, E) \to \Hom(\O_{C_{12}}, \O_{C_1}) \xrightarrow{\xi \circ -} \Ext^2(\O_{C_{12}}, \O_{C_1}(-1)). \]
Note that $\Hom(\O_{C_{12}}, \O_{C_1})$ is one-dimensional. This way, we get that $\Hom(\O_{C_{12}}, E)$ is non-zero if and only if the composition map
\[ \xi\circ -\colon \Hom(\O_{C_{12}}, \O_{C_1}) \to \Ext^2(\O_{C_{12}}, \O_{C_1}(-1)) \]
is zero. 

Now, take the short exact sequence $0 \to \O_{C_{12}} \to \O_{C_1} \to \O_{C_2}(-1)[1] \to 0$, apply both $\Hom(-, \O_{C_1})$ and $\Hom(-, \O_{C_1}(-1))$, and compare using $\xi \circ -$. We get the diagram
\begin{equation}
\begin{tikzcd}[column sep=small]
\Hom(\O_{C_1}, \O_{C_1}) \arrow[r, "\cong"] \arrow[d, hook, "\xi \circ -"] & \Hom(\O_{C_{12}}, \O_{C_1}) \arrow[d, "\xi \circ -"]
\\
\Ext^2(\O_{C_1}, \O_{C_1}(-1)) \arrow[r] & \Ext^2(\O_{C_{12}}, \O_{C_1}(-1)) \arrow[r] & 0.
\end{tikzcd}
\end{equation}
The left vertical map is zero if and only if $\xi$ is contained in the kernel of the bottom map. But this kernel is given by the image of $\Ext^1(\O_{C_2}(-1), \O_{C_1}(-1))$. This is a one-dimensional space. It follows that a single object parametrized by $\P^{n_1-1}$ admits a map from $\O_{C_{12}}$. By semi-continuity of $\dim \Hom(\O_{C_{12}}, -)$, this extension class is in the closure of $C_2 \setminus C_1$. (Alternatively, chasing the diagram above gives it explicitly.) We get the following description.

\begin{claim}
Let $E$ be an object parametrized by $M_2$. We have that $\Hom(\O_{C_{12}}, E)$ is non-zero if and only if $[E]$ lies in $C_2 \subset S \subset M_2$. 
\end{claim}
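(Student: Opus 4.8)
The plan is to combine the two computations that immediately precede the Claim: the case of skyscraper sheaves parametrized by $S\setminus C_1$, and the case of the extensions parametrized by $\P^{n_1-1}$. For the first case, recall that $\O_x$ for $x\in S\setminus C_1$ admits a nonzero map from $\O_{C_{12}}$ if and only if the supports meet, i.e.\ if and only if $x\in C_2\setminus C_1$; this is exactly the statement $\Hom(\O_{C_{12}},\O_x)\neq 0 \iff x\in C_2\setminus C_1$, which we have already recorded. Since $C_2\setminus C_1$ is precisely the part of $C_2\subset S$ lying in the $S\setminus C_1$ locus of $M_2$, the Claim holds for all these points.

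For the second case, I would invoke the diagram computed just above: an object $E$ from \eqref{eq:interirr_extn1} with extension class $\xi\in\Ext^2(\O_{C_1},\O_{C_1}(-1))$ satisfies $\Hom(\O_{C_{12}},E)\neq 0$ if and only if the composition $\xi\circ-\colon\Hom(\O_{C_{12}},\O_{C_1})\to\Ext^2(\O_{C_{12}},\O_{C_1}(-1))$ vanishes, which in turn (by the commutative square, using that the top horizontal map is an isomorphism and the left vertical $\xi\circ-$ is injective) happens exactly when $\xi$ lies in the kernel of $\Ext^2(\O_{C_1},\O_{C_1}(-1))\to\Ext^2(\O_{C_{12}},\O_{C_1}(-1))$. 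That kernel is the image of $\Ext^1(\O_{C_2}(-1),\O_{C_1}(-1))$, which is one-dimensional; hence there is a unique class $\C\xi\in\P^{n_1-1}$, call it $[E_0]$, admitting a nonzero map from $\O_{C_{12}}$. It remains to identify $[E_0]$ with the unique point of $\P^{n_1-1}$ lying in $C_2\subset S\subset M_2$ under the gluing $M_2\cong S\sqcup_{C_1}\P^{n_1-1}$. Here I would argue by semicontinuity: the function $[E]\mapsto\dim\Hom(\O_{C_{12}},E)$ is upper semicontinuous on $M_2$, it is nonzero on $C_2\setminus C_1\subset S$, hence nonzero on the closure $\overline{C_2\setminus C_1}=C_2$, and the closure of $C_2\setminus C_1$ inside $M_2$ meets $\P^{n_1-1}$ exactly at the point where $C_2$ is glued to $C_1\subset\P^{n_1-1}$ (which is the intersection point of $C_1$ and $C_2$ in $S$). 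Since only one point of $\P^{n_1-1}$ can carry a nonzero such $\Hom$, that point must be $[E_0]$, so $[E_0]\in C_2\subset M_2$.

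Putting the two cases together: $\Hom(\O_{C_{12}},E)\neq 0$ for $[E]$ on $S\setminus C_1$ iff $[E]\in C_2\setminus C_1$, and for $[E]$ on $\P^{n_1-1}$ iff $[E]$ is the glued point, which also lies on $C_2$. Hence $\Hom(\O_{C_{12}},E)\neq 0$ if and only if $[E]\in C_2\subset S\subset M_2$, as claimed.

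The main obstacle I anticipate is the identification of the distinguished class $[E_0]\in\P^{n_1-1}$ with the gluing point: the semicontinuity argument shows it lies in the closure of $C_2\setminus C_1$, but to be fully rigorous one wants to know this closure meets $\P^{n_1-1}$ in a single point and that this point is indeed where $C_1\subset\P^{n_1-1}$ is attached to $C_2\subset S$. This can be pinned down either by the explicit diagram chase alluded to in the text (computing the extension class of $\O_{C_{12}}(0,-1)[1]$ as a limit of the $\O_x$, $x\in C_2\setminus C_1$, as $x$ approaches $C_1\cap C_2$), or by appealing to the description of the gluing $M_2\cong S\sqcup_{C_1}\P^{n_1-1}$ from Section \ref{sec:single}, under which $C_1\cap C_2\in S$ is identified with a specific point of the rational normal curve $C_1\subset\P^{n_1-1}$; either way the bookkeeping is routine once the uniqueness (one-dimensionality of $\Ext^1(\O_{C_2}(-1),\O_{C_1}(-1))$) is in hand.
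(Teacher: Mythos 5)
Your proposal is correct and takes essentially the same route as the paper: the same split into skyscrapers on $S\setminus C_1$ versus extensions on $\P^{n_1-1}$, the same application of $\Hom(\O_{C_{12}},-)$ and diagram chase reducing to the one-dimensional image of $\Ext^1(\O_{C_2}(-1),\O_{C_1}(-1))$, and the same semicontinuity argument to place the unique destabilized point of $\P^{n_1-1}$ on the closure of $C_2\setminus C_1$. The extra care you take in identifying that point with the gluing point is exactly what the paper's parenthetical remark about the explicit diagram chase addresses.
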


\begin{remark} \label{remark:interirr_destabilizedfromPn1}
Note that all destabilized objects fit into short exact sequences
\[ 0 \to \O_{C_{12}} \to E \to \O_{C_{12}}(0, -1)[1] \to 0. \]
In the case when $E=\O_x$ for $x \in C_2 \setminus C_1$, this is clear. If $E$ lies in $\P^{n_1-1}$, note that the extension class of the sequence above is \emph{not} an inclusion of coherent sheaves $\O_{C_{12}}(0, -1) \to \O_{C_{12}}$. Instead, it is given by the composition $\O_{C_{12}}(0, -1) \to \O_{C_2}(-1) \to \O_{C_{12}}$ where the first map is a surjection of coherent sheaves, and the second one is an inclusion of coherent sheaves. We point out that this map will correspond to (a multiple of) $(0 \oplus f_0)$ in the notation of Lemma \ref{lemma:interirr_basesOab}.
\end{remark}

Using the techniques from Subsection \ref{subsec:em}, we produce two closed embeddings $S \to M_3$ and $\Bl_{pt} \P^{n_1-1} \to M_3$, where the center of the blow-up is the point corresponding to the extension class from before. 

We can also produce a closed subscheme of $M_3$ by constructing a universal family over $\P \Ext^2(\O_{C_{12}}, \O_{C_{12}}(0, -1))$, parametrizing extensions of the form
\[ 0 \to \O_{C_{12}}(0, -1)[1] \to E \to \O_{C_{12}} \to 0. \]
We will show in a second that $\Ext^2(\O_{C_{12}}, \O_{C_{12}}(0, -1))$ is $(n_1+n_2-2)$-dimensional, hence this defines a closed subscheme $\P^{n_1+n_2-3} \to M_3$. The three subschemes $S, \Bl_{pt}\P^{n_1-1}$ and $\P^{n_1+n_2-3}$ include all closed points of $M_3$. However, it is not immediately clear how these subschemes are glued, and whether $M_3$ is reduced. 

To approach this, we mimic our discussion from Subsection \ref{subsec:singleirr}. The first step we need to adapt is to pick appropriate bases of $\Hom(\O_{C_{12}}, \O_{C_{12}}(a, b))$ for $a, b \in \Z$. To do so, we use the short exact sequences
\begin{equation} \label{eq:interirr_prebasesOab}
0 \to \O_{C_{12}}(a, b) \to \O_{C_1}(a) \oplus \O_{C_2}(b) \to \O_p \to 0,
\end{equation}
where $p = C_1 \cap C_2$. Now, fix bases $e_0, e_1$ of $\Hom(\O_{C_1}, \O_{C_1}(1))$ and $f_0, f_1$ of $\Hom(\O_{C_2}, \O_{C_2})$, satisfying $e_0|_p = f_0|_p=0$ and $e_1|_p = f_1|_p=1$. Taking the long exact sequence of $\Hom(\O_{C_{12}}, -)$ on \eqref{eq:interirr_prebasesOab} gives following description.

\begin{lemma} \label{lemma:interirr_basesOab}
Let $a, b \in \Z$.
\begin{itemize}
\item If $a, b \geq 0$, then $\Hom(\O_{C_{12}}, \O_{C_{12}}(a, b))$ has dimension $a+b+1$. A basis is given by the elements $\{ e_0^a \oplus 0, e_0^{a-1}e_1 \oplus 0, \dots, e_0 e_1^{a-1} \oplus 0, e_0^a \oplus f_1^b, 0 \oplus f_0 f_1^{b-1}, \dots, 0 \oplus f_0^b \}$.
\item If $a\geq 1$ and $b < 0$, then $\Hom(\O_{C_{12}}, \O_{C_{12}}(a, b))$ has dimension $a$. A basis is given by the elements $\{e_0^i e_1^{a-i} \oplus 0: i = a, a-1, \dots, 1\}$.
\item If $a <0$ and $b \geq 1$, then $\Hom(\O_{C_{12}}, \O_{C_{12}}(a, b))$ has dimension $b$. A basis is given by $\{ 0 \oplus f_0^j f_1^{b-j} : j=1, \dots, b \}$. 
\item Otherwise, $\Hom(\O_{C_{12}}, \O_{C_{12}}(a, b))$ is zero. 
\end{itemize}
Moreover, composition at the level of Hom-spaces corresponds to pointwise multiplication of the bases above. 
\end{lemma}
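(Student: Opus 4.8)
The plan is to read everything off the normalization-type short exact sequence
\[ 0 \to \O_{C_{12}}(a,b) \to \O_{C_1}(a) \oplus \O_{C_2}(b) \xrightarrow{\ \mathrm{ev}_p\ } \O_p \to 0 , \]
where $p = C_1 \cap C_2$ and $\mathrm{ev}_p$ sends a pair of sections to the difference of their germs at $p$, after fixing identifications $\O_{C_1}(a)|_p \cong \O_{C_{12}}(a,b)|_p \cong \O_{C_2}(b)|_p$. This sequence is exact because $\nu_\ast \nu^\ast \O_{C_{12}}(a,b) = \O_{C_1}(a) \oplus \O_{C_2}(b)$ for the normalization $\nu \colon C_1 \sqcup C_2 \to C_{12}$, and the single node at $p$ contributes a length-one quotient.

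First I would record two elementary facts. Since any $\O_S$-linear map into a sheaf supported scheme-theoretically on $C_i$ kills the ideal of $C_i$, one has $\Hom_S(\O_{C_{12}}, \O_{C_i}(c)) = \Hom_{C_i}(\O_{C_i}, \O_{C_i}(c)) = H^0(\P^1, \O(c))$, the isomorphism being restriction to $C_i$; and $\Hom_S(\O_{C_{12}}, \O_p) \cong \C$ because $\O_p$ is the residue field at $p$. Applying the left-exact functor $\Hom_S(\O_{C_{12}}, -)$ to the displayed sequence then yields the exact sequence
\[ 0 \to \Hom(\O_{C_{12}}, \O_{C_{12}}(a,b)) \to H^0(\O_{C_1}(a)) \oplus H^0(\O_{C_2}(b)) \xrightarrow{\ (s,t) \mapsto s(p) - t(p)\ } \C , \]
which identifies $\Hom(\O_{C_{12}}, \O_{C_{12}}(a,b))$ with the subspace of pairs of sections agreeing at $p$.

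The four cases are then a matter of inspection. If $a, b \geq 0$ both evaluations $H^0(\O_{C_i}(\cdot)) \to \C$ are onto, hence so is the last arrow, so the $\Hom$-space has dimension $(a+1) + (b+1) - 1 = a+b+1$; for a basis I would take the sections of $\O_{C_1}(a)$ vanishing at $p$ paired with $0$ (namely $e_0^i e_1^{a-i} \oplus 0$ for $i = 1, \dots, a$), one \emph{diagonal} element $e_1^a \oplus f_1^b$ pairing the two sections with value $1$ at $p$, and the sections of $\O_{C_2}(b)$ vanishing at $p$ paired with $0$ (namely $0 \oplus f_0^j f_1^{b-j}$ for $j = 1, \dots, b$). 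If $a \geq 1$ and $b < 0$ then $H^0(\O_{C_2}(b)) = 0$, so the $\Hom$-space is the space of sections of $\O_{C_1}(a)$ vanishing at $p$, of dimension $a$, with basis $e_0^i e_1^{a-i} \oplus 0$ for $i = 1, \dots, a$; the case $a < 0$, $b \geq 1$ is symmetric; and in every remaining case $\mathrm{ev}_p$ is injective on the available sections, so the $\Hom$-space is zero. Finally, the statement about composition follows from the observation that the restriction isomorphisms above intertwine composition with multiplication of sections on $\P^1$: a composite $\O_{C_{12}} \to \O_{C_{12}}(c,d) \to \O_{C_{12}}(a+c, b+d)$ restricts on each $C_i$ to the product of the two restricted sections, and since all the basis elements are monomials in $e_0, e_1$ (respectively $f_0, f_1$) this is literally pointwise multiplication.

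I expect the only delicate point to be the bookkeeping in the middle step: pinning down the connecting map as the signed difference of evaluations at $p$, with the correct identification of the three fibers there, and then checking that the listed elements are linearly independent — which one sees at once by projecting onto the $\O_{C_2}(b)$-summand, where they become the monomial basis $\{ f_0^j f_1^{b-j} : 0 \leq j \leq b \}$ of $H^0(\O_{C_2}(b))$ together with a block of zeros. Everything else is formal.
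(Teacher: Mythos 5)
Your proposal is correct and follows essentially the same route as the paper, which obtains the lemma by applying $\Hom(\O_{C_{12}},-)$ to the normalization sequence \eqref{eq:interirr_prebasesOab} and identifying the kernel with pairs of sections agreeing at $p$. Note that your ``diagonal'' basis element $e_1^a \oplus f_1^b$ is the right one (it is the only listed element not killed by evaluation at $p$), whereas the statement's $e_0^a \oplus f_1^b$ appears to be a typo, since $e_0|_p=0$ while $f_1|_p=1$.
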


Note that in Lemma \ref{lemma:interirr_basesOab} we have picked a particular order for our bases; we will always use this order. Moreover, from Serre duality we get induced dual bases on $\Ext^2(\O_{C_{12}}(a, b), \O_{C_{12}}(a', b'))$; we will use the dual basis in the corresponding order.

\begin{lemma} \label{lemma:interirr_comprank}
Let $\xi \in \Ext^2(\O_{C_{12}}, \O_{C_{12}}(0, -1))$ be given, say
\begin{align*}
\xi ={}& a_{n_1-2, 0}(e_0^{n_1-2} \oplus 0)^\vee + \dots + a_{1, n_1-3}(e_0e_1^{n_1-3} \oplus 0)^\vee + b (e_1^{n_1-2} \oplus f_1^{n_2-1})^\vee \\
&+ c_{1, n_2-2}(0 \oplus f_0f_1^{n_2-2})^\vee + \dots + c_{n_2-1, 0} (0 \oplus f_0^{n_2-1})^\vee. 
\end{align*}
Then, the map $\xi \circ -\colon \Hom(\O_{C_{12}}(0, -1), \O_{C_{12}}) \to \Ext^2(\O_{C_{12}}(0, -1), \O_{C_{12}}(0, -1))$
has rank $0$ if and only if $\xi =0$. Moreover, it has rank $\leq 1$ if and only if one of the following two conditions happen:
\begin{itemize}
\item $a_{i, n_1-2-i}=0$ for all $1 \leq i \leq n_1-2$, $b=\lambda^{n_2-1}$ and $c_{j, n_2-1-j} = \lambda^j \mu^{n_2-1-j}$ for $j=1, \dots, n_2-1$. 
\item $c_{j, n_2-1-j}=0$ for all $1 \leq j \leq n_2-1$. 
\end{itemize}
In any other case, $\xi \circ -$ has rank 2.
\end{lemma}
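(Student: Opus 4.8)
The plan is to reduce the statement to a concrete linear-algebra computation, exactly as in Lemma~\ref{lemma:singleirr_jumplocus}. Using the identification $\Hom(\O_{C_{12}}(0,-1),\O_{C_{12}}) \cong \Hom(\O_{C_{12}},\O_{C_{12}}(0,1))$ and the bases from Lemma~\ref{lemma:interirr_basesOab} (here $a=0$, $b=1$, so this is a $2$-dimensional space with basis $\{e_0\oplus f_1,\ 0\oplus f_0\}$, using the convention there), I first want to compute the bilinear pairing
\[
\Hom(\O_{C_{12}},\O_{C_{12}}(0,1)) \times \Ext^2(\O_{C_{12}},\O_{C_{12}}(0,-1)) \to \Ext^2(\O_{C_{12}},\O_{C_{12}})
\]
given by composition. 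As in the single-curve case, I would use Serre duality to convert $-\circ e$ on $\Ext^2$-groups into $(e\circ-)^\vee$ on the dual $\Hom$-groups, where the latter composition is just pointwise multiplication of sections by Lemma~\ref{lemma:interirr_basesOab}. Concretely: $\Ext^2(\O_{C_{12}},\O_{C_{12}}(0,-1))^\vee = \Hom(\O_{C_{12}}(0,-1),\O_{C_{12}}(n_1-2,n_2-2))$, which by Lemma~\ref{lemma:interirr_basesOab} (case $a=n_1-2\ge1$, $b=n_2-2\ge0$, or rather $a=n_1-2$, $b=n_2-1$ after the twist — I need to be careful with the precise twists) has an explicit monomial basis. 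Multiplying such a monomial basis element by $e_0\oplus f_1$ or by $0\oplus f_0$ lands in an adjacent $\Hom$-space with its own monomial basis, and reading off coefficients gives an explicit matrix.

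**Extracting the rank condition.**
Having written $\xi\circ-$ as an explicit $\dim\Ext^2(\O_{C_{12}}(0,-1),\O_{C_{12}}(0,-1)) \times 2$ matrix whose entries are linear in the coordinates $a_{i,j}, b, c_{j,k}$ of $\xi$, the statement becomes: this matrix has rank $0$ iff all entries vanish iff $\xi=0$ (clear), and rank $\le1$ iff its two columns are proportional. I expect the matrix to have a block structure coming from the two components $C_1$ and $C_2$ of $C_{12}$: the first column (multiplication by $e_0\oplus f_1$) and second column (multiplication by $0\oplus f_0$) will each split into a ``$C_1$-block'' involving the $a_{i,j}$ together with $b$, and a ``$C_2$-block'' involving $b$ together with the $c_{j,k}$. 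The key structural point — and this is the analogue of the matrix displayed in the proof of Lemma~\ref{lemma:singleirr_jumplocus} — is that the $C_2$-block of the two columns looks like two shifted rows of the vector $(b, c_{1,n_2-2},\dots,c_{n_2-1,0})$, so proportionality of that block forces $(b,c_{1,n_2-2},\dots,c_{n_2-1,0}) = (\lambda^{n_2-1},\lambda^{n_2-2}\mu,\dots,\mu^{n_2-1})$ up to scale (a rational normal curve condition) OR forces that block to vanish, i.e.\ $b = c_{j,k}=0$; meanwhile the $C_1$-block only involves the $a_{i,j}$ paired against $0$ in the second column (since $0\oplus f_0$ restricts to $0$ on $C_1$), so proportionality forces either $a_{i,j}=0$ for all $i,j$ (when the $C_2$-block is a nonzero multiple of the rational normal curve vector) or is automatic (when the $C_2$-block vanishes, leaving the first column free to be anything supported on $C_1$, which then automatically has rank $\le 1$ since the second column is zero there).

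**Assembling the two cases.**
This produces exactly the dichotomy in the statement: case one is ``$C_2$-block nonzero, hence a rational normal curve vector, hence $C_1$-block must vanish,'' giving $a_{i,n_1-2-i}=0$ for all $i$, $b=\lambda^{n_2-1}$, $c_{j,n_2-1-j}=\lambda^j\mu^{n_2-1-j}$; case two is ``$C_2$-block vanishes,'' i.e.\ $c_{j,n_2-1-j}=0$ for all $j$ (and then, I should double-check, $b$ must also vanish or be absorbed — actually $b$ sits in both blocks, so ``$C_2$-block vanishes'' already forces $b=0$, and then any $a_{i,j}$ is allowed, the second column being identically zero). In all remaining cases the two columns are linearly independent, so the rank is exactly $2$. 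I would present this by displaying the explicit matrix once (two columns, with the $C_1$ and $C_2$ blocks clearly delineated) and then reading off the three claims — rank $0$, rank $\le1$, rank $2$ — directly from it, exactly as the proof of Lemma~\ref{lemma:singleirr_jumplocus} does in one line at the end.

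**Main obstacle.**
The main obstacle is purely bookkeeping: getting the twists, the Serre-duality shifts $(\O_{C_{12}}(0,-1)$ versus $\O_{C_{12}}(n_1-2,n_2-2))$, and the precise ordering of the monomial bases from Lemma~\ref{lemma:interirr_basesOab} all consistent, so that the displayed matrix genuinely has the advertised shifted-diagonal/rational-normal-curve form and the element $b$ correctly appears at the interface of the two blocks. There is no conceptual difficulty beyond what was already done for the single curve; the novelty is only that the destabilizer $\O_{C_{12}}$ has a reducible, one-dimensional support, so the relevant $\Hom$-spaces are glued out of pieces on $C_1$ and on $C_2$ along the node $p$, and one must track the gluing section $f_0$ vanishing at $p$ carefully. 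I would double-check the final matrix against the known answer for $M_3$ (three components glued as in Proposition~\ref{prop:inter_main}) as a sanity check before committing to the bookkeeping.
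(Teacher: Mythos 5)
Your approach is exactly the paper's: reduce to the single-curve argument of Lemma \ref{lemma:singleirr_jumplocus}, use Serre duality to turn $\xi\circ-$ into the transpose of pointwise multiplication by $e_0\oplus f_1$ and $0\oplus f_0$ on the monomial bases of Lemma \ref{lemma:interirr_basesOab}, display the resulting $(n_1+n_2-3)\times 2$ matrix, and read off the rank. The paper's proof is precisely this one displayed matrix followed by ``the result now follows immediately.''

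One correction to the bookkeeping point you flagged: $b$ does \emph{not} sit in both columns. Multiplication by $0\oplus f_0$ kills the $C_1$-component and always introduces a factor of $f_0$ on $C_2$, so it can never hit the basis element $e_1^{n_1-2}\oplus f_1^{n_2-1}$; hence $b$ pairs only against $e_0\oplus f_1$ and appears only in the first column. Consequently the ``$C_2$-block vanishes'' case requires only $c_{j,n_2-1-j}=0$ and leaves $b$ (and the $a_{i,j}$) free, exactly as in the second bullet of the statement --- your parenthetical claim that this case forces $b=0$ would contradict the lemma. Carrying out the computation as you planned would surface this, but as written that step of your reasoning is wrong.
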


\begin{proof}
We proceed as in the proof of Lemma \ref{lemma:singleirr_jumplocus}. We get that $\xi \circ -$ is given by
\[ \begin{pmatrix} a_{n_1-2, 0} & \cdots & a_{1, n_1-3} & b & c_{1, n_2-2} & \cdots & c_{n_2-2, 1} \\ 0 & \cdots & 0 & c_{1, n_2-2} & c_{2, n_2-3} & \cdots & c_{n_2-1, 0} \end{pmatrix}^t. \]
The result now follows immediately. 
\end{proof}

\begin{remark} \label{remark:interirr_identification}
We can mimic Claim \ref{claim:singleirr_Ext1} to this setup. To do so, let $\xi$ be an element such that $\xi \circ -$ has rank 1. By Lemma \ref{lemma:interirr_comprank}, we get two options.
\begin{itemize}
\item In the first case, the kernel of $\xi \circ -$ is spanned by $\phi \colon\mu (1 \oplus f_1) - \lambda (0 \oplus f_0)$. This defines a map $\phi \colon \O_{C_{12}}(0, -1) \to \O_{C_{12}}$, and so a point in $C_2\setminus C_1$ (provided that $\phi$ is injective, or equivalently, that $\mu \neq 0$). The elementary modification of the universal family of $S \subset M_2$ gives us this extension class.

\item In the second case, the kernel of $\xi \circ -$ is spanned by $(0 \oplus f_0)$. The results of Subsection \ref{subsec:em} ensure that the objects parametrized by the exceptional divisor of $\Bl_{pt} \P^{n_1-1}$ are given by these extensions.
\end{itemize}
\end{remark}

As a direct consequence, we get a map $S \cup \Bl_{pt} \P^{n_1-1} \cup \P^{n_1+n_2-3} \to M_3$, where $S \cup \Bl_{pt} \P^{n_1} \cup \P^{n_1+n_2-3}$ is glued as in Proposition \ref{prop:inter_main}. This map is bijective on closed points and on tangent vectors.

\subsection{Local structure} \label{subsec:interlocal}

The goal of this section is to show that the map $S \cup \Bl_{pt} \P^{n_1-1} \cup \P^{n_1+n_2-3} \to M_3$ constructed in the previous section is an isomorphism, by using Lemma \ref{lemma:critiso_main}. To do so, we need to show that $M_3$ is reduced. This will be done by computing $\hat{\O}_{M_3, x}$ for any $x \in M_3$.

We will focus on the point $x \in M_3$ corresponding to the triple intersection $S \cap \Bl_{pt} \P^{n_1-1} \cap \P^{n_1+n_2-3}$. To do so, we follow the same idea from Subsection \ref{subsec:singlelocal}.

We start by producing a local model from our situation. We need a (quasi-projective) surface $X$ with two curves that can be contracted to a $\frac{1}{n_1n_2-1}(1, n_2)$ cyclic quotient singularity. A toric computation gives us a local model $X$ with three charts $U_1 = \Spec \C[x, u]$, $U_2 = \Spec \C[y, v]$ and $U_3 = \Spec \C[z, w]$, glued as follows
\begin{gather*}
U_1 \cap U_2\colon \qquad \C[x, u]_x \cong \C[y, v]_y, \quad y=x^{-1}, \quad v=ux^{n_1}, \\
U_2 \cap U_3\colon \qquad \C[y, v]_v \cong \C[z, w]_w, \quad w=v^{-1}, z= yv^{n_2}, \\
U_1 \cap U_3 \colon \qquad \C[x, u]_{x, u} \cong \C[z, w]_{z, w}.
\end{gather*}
Let $C_1$ be defined by the equation $u$ in $U_1$ and $v$ in $U_2$; let $C_2$ be defined by $y$ in $U_2$, $z$ in $U_3$; and let $L$ be defined via $w$ in $U_3$. We get that $C_1, C_2$ are smooth, isomorphic to $\P^1$ and transversal; while $L$ is isomorphic to $\mathbb{A}^1$, and only cuts $C_2$ at a single point. Moreover, $C_i^2=-n_i$. Using these curves we define some line bundles, whose transition functions are summarized in Table \ref{table:interlocal_transition}.

\begin{table}[htbp]
\centering
\caption{Line bundles on $X$ and transition functions.}
\label{table:interlocal_transition}
\begin{tabular}{|c|ccc|} \hline
Line bundle & $f_{12}$ & $f_{13}$ & $f_{23}$ \\ \hline
$\O_X(-C_1)$ & $y^{n_1}$ & $u=z^{n_1}w^{n_1n_2-1}$ & $w^{-1}$ \\
$\O_X(-C_2)$ & $y^{-1}$ & $z^{-1}$ & $w^{n_2}$ \\
$\O_X(L)$ & $1$ & $w$ & $w$ \\ \hline
\end{tabular}
\end{table}

Using these line bundles, we get resolutions
\begin{equation} \label{eq:interlocal_resoncurve}
\begin{aligned}
\O_{C_{12}} &= [\O_X(-C_1-C_2) \to \O_C] \\
\O_{C_{12}}(0, -1) &= [\O_X(-C_1-C_2-L) \to \O_X(-L)].
\end{aligned}
\end{equation}
Using these resolutions we can compute $\Ext^1(\O_{C_{12}}, \O_{C_{12}}(0, -1)[1])$ via a \v{C}ech cover. It turns out that this space is $(n_1+n_2-2)$-dimensional, cf. Lemma \ref{lemma:interirr_comprank}. We can give representatives for a basis in terms of the \v{C}ech complex: take $\{ \alpha^i \}_{1 \leq i \leq n_1-2} \cup \{ \beta^j\}_{0 \leq j \leq n_2-1}$, where
\[ (\alpha^i)_{12}^{-1,-1} = y^i, \, (\alpha^i)_{13}^{-1,-1} = y^i; \qquad
(\beta^j)_{13}^{-1,-1} = y^{n_1-1}w^j, \, (\beta^j)^{-1, -1}_{23} = w^j, \]
and all other entries are zero. The images of $\alpha^i, \beta^j$ in $\Ext^1(\O_{C_{12}}, \O_{C_{12}}(0, -1)[1])$ give us a basis. We will focus on $\lambda:=\beta^0$. (In the notation of Lemma \ref{lemma:interirr_comprank}, this corresponds to $(e_1^{n_1-2} \oplus f_1^{n_2-1})^\vee$.)

The class of $\lambda$ in $\Ext^1(\O_{C_{12}}, \O_{C_{12}}(0, -1)[1])$ defines an extension
\[ \O_{C_{12}}(0, -1)[1] \to E \to \O_{C_{12}} \xrightarrow{\lambda} \O_{C_{12}}(0, -1)[2]. \]
Using the cocycle computation from above, we can construct a resolution of $E$ by locally free sheaves as follows. The entries of $\lambda$ induce an element of $\Ext^1(\O_X(-C_1-C_2), \O_X(-L))$. We get a rank two vector bundle $0 \to \O_X(-L) \to V \to \O_X(-C_1-C_2) \to 0$
that is free on $U_1, U_2, U_3$, and has transition functions
\[ e_{12} = \begin{pmatrix} 1 & 0 \\ 0 & y^{n_1-1} \end{pmatrix} \quad e_{13} = \begin{pmatrix} w^{-1} & y^{n_1-1}w^{-1} \\ 0 & uz^{-1} \end{pmatrix} \quad e_{23} = \begin{pmatrix} w^{-1} & w^{-1} \\ 0 & w^{n_2-1} \end{pmatrix}. \]
Composing with the maps from \eqref{eq:interlocal_resoncurve} we get the complex 
\begin{equation} \label{eq:interlocal_reslf}
[\O_X(-L-C_1-C_2) \to V \to \O_X].
\end{equation}
By construction, this complex is quasi-isomorphic to $E$.

Using the resolution \eqref{eq:interlocal_reslf}, we compute the semicosimplicial DGLA of $\HOM(E, E)$ with respect to $\Ucover=\{U_1, U_2, U_3\}$, which we denote by $L^\Delta$. By Lemma \ref{lemma:semiDGLA_defiso}, the deformation functor associated to $\Tot_{TW}(L^\Delta)$ computes $\Def_E$.

To compute the hull of $\Def_E$, we need to find a basis of $H^1(\Tot_{TW}(L^\Delta)) \cong \Ext^1(E, E)$ and lift it to $\Tot(L^\Delta)^1$. We mimic our argument of Lemma \ref{lemma:singlelocal_hullE}, using the exact sequence
\begin{equation} \label{eq:interlocal_Ext1}
\begin{aligned}
& 0 \to \C\lambda \to \Ext^2(\O_{C_{12}}, \O_{C_{12}}(0, -1)) \to \Ext^1(E, E) \\ &\quad \to \Hom(\O_{C_{12}}(0, -1), \O_{C_{12}}) \to \Ext^2(\O_{C_{12}}, \O_{C_{12}}) \to 0,
\end{aligned}
\end{equation}
cf. Claim \ref{claim:singleirr_Ext1} (and the discussion of Remark \ref{remark:interirr_identification}).
\begin{itemize}
\item The map $\Ext^2(\O_{C_{12}}, \O_{C_{12}}(0, -1)) \to \Ext^1(E, E)$ gives us an $(n_1+n_2-3)$-dimensional subspace of $\Ext^1(E, E)$. A \v{C}ech computation gives us representatives of a basis of this subspace as follows. First, set $\mu^i \in \Tot_{TW}(L^\Delta)^1$ with $\mu^i_0 = 0$, $\mu^i_1 = 2t_0 \, dt_0 \otimes \overline{\mu}^i$, $\mu^i_2 = 2t_0 \, dt_0 \otimes \partial_1 \overline{\mu}^i$, for
\[ (\overline{\mu}^i)_{12}^{-1,-1} = \begin{pmatrix} 0 & y^i \\ 0 & 0 \end{pmatrix}, \quad (\overline{\mu}^i)_{13}^{-1,-1} = \begin{pmatrix} 0 & y^i \\ 0 & 0 \end{pmatrix}. \]
Second, set $\eta^j \in \Tot_{TW}(L^\Delta)^1$ via the formula $\eta^j_0=0$, $\eta^j_1=2t_1 \, dt_1 \otimes \overline{\eta}^j$, and $\eta^j_2 = 2t_2 \, dt_2 \otimes \partial_0 \overline{\eta}^j$, where
\[ (\overline{\eta}^j)_{13}^{-1,-1} = \begin{pmatrix} 0 & y^{n_1-1}w^j \\ 0 & 0 \end{pmatrix}, \quad (\overline{\eta}^j)_{23}^{-1,-1} = \begin{pmatrix} 0 & w^j \\ 0 & 0 \end{pmatrix}. \]
The elements $\{ \mu^i \}_{1 \leq i \leq n_1-2}$, $\{ \eta^j \}_{1 \leq j \leq n_2-1}$ give the claimed representatives.

\item The map $\Ext^1(E, E) \to \Hom(\O_{C_{12}}(0, -1), \O_{C_{12}})$ has a one-dimensional image, which maps to zero on $\Ext^2(\O_{C_{12}}, \O_{C_{12}})$. A representative of this element is given by $\tau \in \Tot_{TW}(L^\Delta)^1$ given by $\tau_0 = 1 \otimes \overline{\tau}$, $\tau_1 = 1 \otimes \partial_0 \overline{\tau}$ and $\tau_2 = 1 \otimes \partial_0 \partial_0\overline{\tau}$, where
\begin{gather*}
\overline{\tau}_1^{-2, -1} = \begin{pmatrix} -xu \\ x^{n_1}u \end{pmatrix}, \quad \overline{\tau}_2^{-2, -1} = \begin{pmatrix} -v \\ v \end{pmatrix}, \quad \overline{\tau}_3^{-2, -1} = \begin{pmatrix} 0 \\ 1 \end{pmatrix} \\
\overline{\tau}_1^{-1, 0} = \begin{pmatrix} -x^{n_1}u & -xu \end{pmatrix}, \quad \overline{\tau}_2^{-1, 0} = \begin{pmatrix} -v & -v \end{pmatrix}, \quad \overline{\tau}_3^{-1, 0} = \begin{pmatrix} -1 & 0 \end{pmatrix}.
\end{gather*} 
\end{itemize}

This way, we have that $\{ \mu^i, \eta^j, \tau \}$ represent a basis for $H^1(\Tot_{TW}(L^\Delta))$. Let $\{p_i, q_j, r\}$ be the dual basis, so that we have the element
\[ \xi_1 = \sum_{i=1}^{n_1-2} \mu^i \otimes p_i + \sum_{j=1}^{n_2-1} \eta^j \otimes q_j + \tau \otimes r \]
in $\Tot_{TW}(L^\Delta)^1 \otimes \m_S/J_1$, where $S=\C[[p_i, q_j, r]]$ and  $J_1=\m_S^2$.

\begin{lemma} \label{lemma:interirr_order3}
We have $J_2 = \m_S^3 + (q_2r, \dots, q_{n_2-1}r)$. Moreover, there is a choice of a lift $\xi_2$ such that
\[ J_3 = \m J_2 + (p_1q_1r, \dots, p_{n_1-2}q_1r, q_2r+q_1^2r, q_3r+q_1q_2r, \dots, q_{n_2-1}r+q_1q_{n_2-2}r). \]
\end{lemma}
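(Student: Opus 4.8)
The plan is to run the obstruction-theoretic hull construction of Subsection~\ref{subsec:versal} for the semicosimplicial DGLA $L^\Delta$, with $S = \C[[p_i, q_j, r]]$ and the universal first-order element $\xi_1$ already written down. All brackets are computed inside $\Tot_{TW}(L^\Delta)$ via the \v{C}ech formula \eqref{eq:semiDGLA_commutator}, and the cohomology $H^\bullet(\Tot_{TW}(L^\Delta)) \cong \Ext^\bullet(E,E)$ is analysed through the block structure induced by the triangle $\O_{C_{12}}(0,-1)[1] \to E \to \O_{C_{12}}$; this is what makes the otherwise unwieldy bracket computations manageable.

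\emph{First statement.} By the proof of Corollary~\ref{cor:versal_quadratic} one has $J_2 = \m_S^3 + \ker\kappa_2$, where $\kappa_2$ is the primary obstruction, i.e.\ the quadratic part of $\tfrac12[\xi_1,\xi_1] \in H^2 \otimes \m_S^2/\m_S^3$ (Remark~\ref{remark:DGLA_primary}). The classes $\mu^i$ and $\eta^j$ all come from $\Ext^1(\O_{C_{12}},\O_{C_{12}}(0,-1)[1]) \subset \Ext^1(E,E)$, on which the Yoneda product already vanishes at the cochain level, so $[\mu^i,\mu^{i'}] = [\mu^i,\eta^j] = [\eta^j,\eta^{j'}] = 0$ and $\tfrac12[\xi_1,\xi_1] = \sum_i [\mu^i,\tau]\,p_i r + \sum_j [\eta^j,\tau]\,q_j r + \tau^2\,r^2$. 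The remaining brackets are controlled by the composition maps of Lemma~\ref{lemma:interirr_comprank}: identifying $\tau$ with the blow-up direction $0\oplus f_0 \in \Hom(\O_{C_{12}}(0,-1),\O_{C_{12}})$ (Remark~\ref{remark:interirr_identification}), one finds that $[\mu^i,\tau]$, $[\eta^1,\tau]$ and $\tau^2$ lie in $\Ext^2(\O_{C_{12}},\O_{C_{12}})$ or in the image of composition with $\lambda$ inside $\Ext^2(\O_{C_{12}}(0,-1),\O_{C_{12}}(0,-1))$, hence die in $\Ext^2(E,E)$ (this uses \eqref{eq:interlocal_Ext1} to pin down the spectral-sequence differentials), whereas $[\eta^j,\tau]$ for $j = 2,\dots,n_2-1$ maps to a linearly independent family $v_j \in \Ext^2(E,E)$. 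Thus $\kappa_2 = \sum_{j=2}^{n_2-1} v_j\,q_j r$ and $J_2 = \m_S^3 + (q_2 r,\dots,q_{n_2-1}r)$.

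\emph{Second statement.} Here I would compute the secondary obstruction. Choose cochain primitives $\nu_i,\nu,\nu'' \in \Tot_{TW}(L^\Delta)^1$ with $d\nu_i = [\mu^i,\tau]$, $d\nu = [\eta^1,\tau]$, $d\nu'' = \tau^2$, and take the lift $\xi_2 = \xi_1 - \sum_i \nu_i\,p_i r - \nu\,q_1 r - \nu''\,r^2$, which solves Maurer--Cartan modulo $J_2$. The degree-$2$ part of $d\xi_2 + \tfrac12[\xi_2,\xi_2]$ still represents $\sum_{j\ge 2} v_j q_j r$, and its degree-$3$ part is $[\xi_1,\xi_2-\xi_1]$. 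Expanding this bracket and collecting monomials: the coefficient of $v_j$ acquires the cubic correction $q_1 q_{j-1}r$ coming from $[\eta^{j-1},\nu]$, which a direct \v{C}ech computation identifies with $[\eta^j,\tau]$ modulo coboundaries (using the multiplicative description of the $\eta^j$ in Lemma~\ref{lemma:interirr_basesOab}), while the monomials $p_i q_1 r$ occur with coefficient $[\mu^i,\nu] + [\eta^1,\nu_i]$, the triple (Massey-type) product $\langle\mu^i,\eta^1,\tau\rangle$. One then checks that these $n_1-2$ classes are linearly independent in $\Ext^2(E,E)$ and independent from $v_2,\dots,v_{n_2-1}$, so that $ob(\xi_2) = \sum_i w_i\,p_i q_1 r + \sum_{j=2}^{n_2-1} v_j(q_j r + q_1 q_{j-1}r)$ with $\{w_i\}\cup\{v_j\}$ linearly independent; the recipe then yields $J_3 = \m J_2 + (p_1 q_1 r,\dots,p_{n_1-2}q_1 r,\,q_2 r + q_1^2 r,\dots,q_{n_2-1}r + q_1 q_{n_2-2}r)$.

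The main obstacle is this last step: one must choose the primitives $\nu_i,\nu,\nu''$ compatibly, carry out $[\xi_1,\xi_2-\xi_1]$ in the Thom--Whitney complex with the correct signs and normalizations, and---most delicately---prove that all of the $\Ext^2(E,E)$-classes involved (the $v_j$ from the quadratic step and the Massey products $w_i$) are linearly independent, so that no unexpected cancellation occurs. I would establish this independence either by continuing the exact sequence \eqref{eq:interlocal_Ext1} one step further to describe a basis of $\Ext^2(E,E)$, or directly from the explicit \v{C}ech cocycle representatives of $\mu^i,\eta^j,\tau$.
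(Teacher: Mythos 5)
Your proposal is correct and follows essentially the same route as the paper's proof: compute the primary obstruction from the brackets of the chosen representatives (with only $[\tau,\eta^j]$, $j\geq 2$, surviving in $\Ext^2(E,E)$), use primitives for the exact brackets $[\tau,\mu^i]$ and $[\tau,\eta^1]$ to build the lift $\xi_2$, and then extract $J_3$ from the cubic terms $[\nu,\mu^i]\otimes p_iq_1r$ and the identities $[\nu,\eta^j]=[\tau,\eta^{j+1}]$ together with the linear independence of the resulting classes. The explicit \v{C}ech cocycle computations you defer (including checking that $[\nu,\eta^{n_2-1}]$ is exact, so no $q_1q_{n_2-1}r$ term survives) are exactly what the paper's proof supplies.
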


\begin{proof}
We compute obstructions using the description in Subsection \ref{subsec:DGLA}. Note that $[\mu^i, \mu^{i'}]$, $[\eta^j, \eta^{j'}]$, $[\tau, \tau]$, $[\mu^i, \eta^j]$ are all zero in $\Tot_{TW}(L^\Delta)^2$. We also have that $[\tau, \mu^i] = -d\theta^i$, where $\theta^i_0=t_0^2 \otimes \overline{\theta}^i$, $\theta^i_1 = t_0^2 \otimes \partial_1 \overline{\theta}^i$, $\theta^i_2 = t_0^2 \otimes \partial_1 \partial_1 \overline{\theta}^i$, and 
\[ (\overline{\theta}^i)_1^{-1, 0} = \begin{pmatrix} 0 & -ux^{n_1-i} \end{pmatrix}, \quad (\overline{\theta}^i)_1^{-2,-1} = \begin{pmatrix} -   ux^{n_1-i} \\ 0 \end{pmatrix}. \]
Similarly, we have that $[\tau, \eta^1] =- d\nu$, where $\nu_0=1\otimes \overline{\nu}$, $\nu_1 = t_1^2 \otimes (\partial_0\overline{\nu}-\partial_1 \overline{\nu}) + 1 \otimes \partial_1 \overline{\nu}$, $\nu_2 = t_2^2 \otimes (\partial_0 \partial_0 \overline{\nu} - \partial_0\partial_1 \overline{\nu}) + 1 \otimes \partial_0 \partial_1 \overline{\nu}$, with
\begin{gather*}
\overline{\nu}_1^{-2,-1} = \begin{pmatrix} 0 \\ 1 \end{pmatrix}, \quad \overline{\nu}_2^{-2,-1} = \begin{pmatrix} 0 \\ 1 \end{pmatrix}, \quad \overline{\nu}_3^{-2,-1} = \begin{pmatrix} 0 \\ w \end{pmatrix}, \\
\overline{\nu}_1^{-1, 0} = \begin{pmatrix} -1 & 0 \end{pmatrix}, \quad \overline{\nu}_2^{-1,0} = \begin{pmatrix} -1 & 0 \end{pmatrix}, \quad \overline{\nu}_3^{-1,0} = \begin{pmatrix} -w & 0 \end{pmatrix}.
\end{gather*}
On the other hand, the images of $[\tau, \eta^2], \dots, [\tau, \eta^{n_2-1}]$ are linearly independent in $H^2(\Tot(L^\Delta)) \cong \Ext^2(E, E)$. This gives us the statement about $J_2$. We use this cocycles to construct the lift
\begin{equation} \label{eq:interirr_xi2}
\xi_2 = \sum_{i=1}^{n_1-2} \mu^i \otimes p_i + \sum_{j=1}^{n_2-1} \eta^j \otimes q_j + \tau \otimes r + \sum_{i=1}^{n_1-2} \theta_i \otimes p_i r + \nu \otimes q_1r
\end{equation}
in $\Tot(L^\Delta)^1 \otimes S/J_2$.

To compute $J_3$, we lift $\xi_2$ to $\tilde{\xi}_2$ given by the same formula \eqref{eq:interirr_xi2}, now seen as an element of $\Tot(L^\Delta)^1 \otimes J_2/\m_S J_2$. We need to compute $d\tilde{\xi}_2 + \frac{1}{2}[\tilde{\xi}_2, \tilde{\xi}_2]$ and its image in $H^2(\Tot(L^\Delta)) \otimes J_2/\m_S J_2$.

One quickly verifies that $[\nu, \nu]$, $[\theta^i, \theta^{i'}]$, $[\theta^i, \mu^{i'}]$, $[\theta^i, \eta^j]$, $[\tau, \nu]$, $[\tau, \theta^i]$, $[\nu, \theta^i]$ are all zero in $\Tot_{TW}(L^\Delta)^2$. The same type of computations show that $[\nu, \eta^{n_2-1}] + d\sigma =0$ for an element $\sigma \in \Tot_{TW}(L^\Delta)^1$, and that $[\nu, \eta^j] = [\tau, \eta^{j+1}]$ for $j \leq n_2-2$. At last, we get that $\{[\nu, \mu^1], \dots, [\nu, \mu^{n_1-1}], [\tau, \eta^2], \dots, [\tau, \eta^{n_2-1}]\}$ are linearly independent in $\Ext^2(E, E)$. This way, we get that the image of $d\tilde{\xi}_2 + \frac{1}{2}[\tilde{\xi}_2, \tilde{\xi}_2]$ inside $\Ext^2(E, E) \otimes J_2/\m_S J_2$ is represented by
\[ \sum_{i=1}^{n_1-2} [\nu, \mu^i] \otimes p_i q_1 r + \sum_{j=2}^{n_2-1} [\tau, \eta^j] \otimes (q_j r+q_1q_{j-1}r) + d\sigma \otimes q_1 q_{n_2-1} r. \]
This immediately gives us the claimed description of $J_3$.
\end{proof}

\begin{cor} \label{cor:interirr_reducedugly}
If $x \in M_3$ is the point corresponding to $[E]$, then $\hat{\O}_{M_3, x}$ is reduced.
\end{cor}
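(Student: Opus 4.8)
The plan is to identify $\hat{\O}_{M_3, x}$ with the completed local ring at $x$ of the candidate glued scheme, and read off reducedness there. Since $x$ corresponds to the \emph{stable} object $E$, Lemma \ref{lemma:DGLAmoduli_gms} together with the triviality of the $\Aut(E) = \C$-action gives $\hat{\O}_{M_3, x} \cong R = \C[[\Ext^1(E, E)]]/J = S/J$, the hull of $\Def_E$, where $S = \C[[p_i, q_j, r]]$ as in Subsection \ref{subsec:interlocal}. By Lemma \ref{lemma:interirr_order3} we already know $J$ modulo $\m_S^4$, namely $J_3 = J + \m_S^4 = \m_S J_2 + (p_1 q_1 r, \dots, p_{n_1-2} q_1 r,\, q_2 r + q_1^2 r, \dots, q_{n_2-1} r + q_1 q_{n_2-2} r)$, with $J_2 = \m_S^3 + (q_2 r, \dots, q_{n_2-1} r)$. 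So it remains to pin down $J$ exactly, and the cleanest route is to compare with a ring that is reduced for obvious reasons.

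Concretely, I would set $A := \hat{\O}_{G, x}$, where $G = S \cup \Bl_{pt}\P^{n_1-1} \cup \P^{n_1+n_2-3}$ is the glued scheme from the end of Subsection \ref{subsec:interirr}, with the gluing of Proposition \ref{prop:inter_main}. The family constructed there gives a morphism $G \to M_3$ that is bijective on closed points and an isomorphism on tangent spaces; completing at $x$ and using versality of $R$, this yields a surjection $R \twoheadrightarrow A$, i.e.\ $A = S/I$ with $J \subseteq I$, and the induced $h_A \to \Def_E$ is an isomorphism on tangent spaces. I then invoke Proposition \ref{prop:versal_stopcriterion}(2): it suffices to produce a degree $d$ with $\m_S^d \cap I \subseteq \m_S I$ and $J + \m_S^d = I + \m_S^d$, and I expect $d = 4$ to work. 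Granting this, Proposition \ref{prop:versal_stopcriterion} gives $R \cong A$; and $A$ is the completed local ring of an iterated pushout of smooth varieties along closed immersions of reduced schemes, hence reduced, so $\hat{\O}_{M_3, x}$ is reduced.

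To run the comparison I would compute $I$ explicitly from the local geometry of the triple gluing at $x$. In the coordinates of Subsection \ref{subsec:interlocal}, $\P^{n_1+n_2-3}$ is the locus $r = 0$ while $S$ and $\Bl_{pt}\P^{n_1-1}$ govern the directions with $r$ invertible; threading the identifications of Remark \ref{remark:interirr_identification} and Lemma \ref{lemma:interirr_comprank} should show that $I$ is generated by the quadrics $q_j r$ ($j \geq 2$) up to their cubic corrections $q_1 q_{j-1} r$ together with the cubics $p_i q_1 r$ — that is, $I + \m_S^4$ equals the ideal $J_3$ of Lemma \ref{lemma:interirr_order3}, and these generators already generate $I$, so that $\m_S^4 \cap I = \m_S I$.

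The main obstacle is exactly this explicit identification of $I$: one must understand precisely how the three components sit inside $\Spec R$ near the triple point — in particular that the gluing introduces no generators of degree $\geq 4$ and that $\m_S^4 \cap I \subseteq \m_S I$, so that $d = 4$ is admissible in Proposition \ref{prop:versal_stopcriterion}. (An alternative avoiding $G$ would be to push the obstruction computation of Lemma \ref{lemma:interirr_order3} to all orders and recognize the resulting ideal directly, but controlling this for general $n_1, n_2$ looks harder.)
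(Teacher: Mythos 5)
Your proposal is correct and follows essentially the same route as the paper: identify $\hat{\O}_{M_3,x}$ with the hull $R$, compare it with $A=\hat{\O}_{\tilde{M},x}\cong\C[[p_i,q_j,r]]/(p_iq_1r,\,q_jr)$ via the morphism $\tilde{M}\to M_3$, and apply Proposition \ref{prop:versal_stopcriterion} with $d=4$ using the order-$3$ obstruction computation of Lemma \ref{lemma:interirr_order3} (the cubic corrections $q_1q_{j-1}r$ you worry about are absorbed by the coordinate change $\psi$ built into that proposition, and $\m_S^4\cap I\subseteq\m_S I$ is immediate since $I$ is a monomial ideal with generators of degree at most $3$). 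The "main obstacle" you flag is exactly what the paper resolves by the explicit identification in Subsection \ref{subsec:interirr}, so nothing further is missing.
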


\begin{proof}
Consider the morphism $\tilde{M}:=S \cup \Bl_{pt} \P^{n_1-1} \cup \P^{n_1+n_2-3} \to M_3$ from Subsection \ref{subsec:interirr}. We have $\hat{\O}_{\tilde{M}, x} \cong A:=\C[[p_i, q_j, r]]/(p_1q_1r, \dots, p_{n_1-2}q_1r, q_2r, \dots, q_{n_2-1}r)$. 

We apply Proposition \ref{prop:versal_stopcriterion} to the map $\tilde{\O}_{M_3, x} \to A$ with $d=4$. The condition $J_3=J+\m_S^4$ holds thanks to the computation of Lemma \ref{lemma:interirr_order3}.
\end{proof}

\begin{remark} \label{remark:interirr_restofpts}
A similar computation can be carried out to show that $M_3$ is reduced at all other points. It turns out that for these points one can apply Proposition \ref{prop:versal_stopcriterion} with $d=2$. Thus, one can use the techniques of \citelist{\cite{Xia18}*{\textsection 4--5} \cite{TX22}*{\textsection 8}} to compute the principal obstruction, and conclude by using Corollary \ref{cor:versal_quadratic}.
\end{remark}

\begin{cor}
The morphism $\tilde{M}:=S \cup \Bl_{pt} \P^{n_1-1} \cup \P^{n_1+n_2-3} \to M_3$ from Subsection \ref{subsec:interirr} is an isomorphism.
\end{cor}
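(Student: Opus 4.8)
The plan is to deduce the statement from the isomorphism criterion of Lemma~\ref{lemma:critiso_main}. Write $g\colon \tilde{M} = S \cup \Bl_{pt}\P^{n_1-1} \cup \P^{n_1+n_2-3} \to M_3$ for the morphism constructed in Subsection~\ref{subsec:interirr}. To apply the criterion I need four facts about $g$: that it is proper, that $M_3$ is reduced, that $g$ is bijective on closed points, and that $g$ induces isomorphisms on tangent spaces. The last two are exactly what was recorded at the end of Subsection~\ref{subsec:interirr}. Properness is immediate: each of $S$, $\Bl_{pt}\P^{n_1-1}$ and $\P^{n_1+n_2-3}$ is proper over $\C$, so their scheme-theoretic gluing (a pushout along closed immersions) is proper over $\C$ as well; since $M_3$ is separated, any morphism $\tilde{M}\to M_3$ from a proper scheme is proper.

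The remaining, and really the only substantive, point is that $M_3$ is \emph{reduced}. Since every closed point of $M_3$ corresponds to a stable object $E$, the action of $\Aut(E)=\C^\ast$ is trivial and Lemma~\ref{lemma:DGLAmoduli_gms} identifies $\hat{\O}_{M_3,x}$ with the hull $\C[[\Ext^1(E,E)]]/I$ of $\Def_E$. At the point $x$ lying on the triple intersection $S \cap \Bl_{pt}\P^{n_1-1} \cap \P^{n_1+n_2-3}$, reducedness of this ring is exactly Corollary~\ref{cor:interirr_reducedugly}, obtained from the order-three obstruction computation of Lemma~\ref{lemma:interirr_order3} together with Proposition~\ref{prop:versal_stopcriterion} applied with $d=4$. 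At every other closed point the hull ideal $I$ is generated in degree two, so — as indicated in Remark~\ref{remark:interirr_restofpts} — one may instead compute the primary obstruction $\kappa_2$ by the method of \cite{Xia18} and \cite{TX22} and invoke Corollary~\ref{cor:versal_quadratic}; this shows $\hat{\O}_{M_3,x}$ is reduced there as well. Because $M_3$ is of finite type over $\C$, its non-reduced locus is closed and, if nonempty, contains a closed point; hence reducedness of all the completed local rings at closed points forces $M_3$ itself to be reduced.

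With properness, reducedness of $M_3$, bijectivity on closed points and isomorphism on tangent spaces all in hand, Lemma~\ref{lemma:critiso_main} yields that $g$ is an isomorphism. The main obstacle in this chain is concentrated entirely in the reducedness statement at the triple point: there the hull ideal of $\Def_E$ is genuinely not quadratically generated, so the primary-obstruction shortcut of Corollary~\ref{cor:versal_quadratic} fails, and one is forced to carry the Maurer--Cartan and obstruction computation in the Thom--Whitney DGLA one degree further, as in Lemma~\ref{lemma:interirr_order3}. Once that computation is available, the corollary is a formal consequence of the machinery of Sections~\ref{sec:irrviaWC} and~\ref{sec:localstr}.
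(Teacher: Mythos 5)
Your proof is correct and follows the same route as the paper: apply Lemma \ref{lemma:critiso_main}, using the bijectivity on closed points and tangent vectors established at the end of Subsection \ref{subsec:interirr}, together with reducedness of $M_3$ from Corollary \ref{cor:interirr_reducedugly} (triple point) and Remark \ref{remark:interirr_restofpts} (all other points). The extra details you supply — properness of the glued scheme and the passage from reduced completed local rings at closed points to reducedness of $M_3$ — are correct and are left implicit in the paper.
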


\begin{proof}
We apply Lemma \ref{lemma:critiso_main}. We have checked before that $\tilde{M} \to M_3$ is bijective on points and on tangent vectors; the results of Corollary \ref{cor:interirr_reducedugly} and Remark \ref{remark:interirr_restofpts} ensure that $M_3$ is reduced.
\end{proof}

\bibliography{Half2.bbl}
\end{document}